\documentclass[11pt]{article}

\textheight 8.5truein
\parskip 0.1in
\topmargin 0.25in
\headheight 0in
\headsep 0in
\textwidth 6.5truein
\oddsidemargin  0in
\evensidemargin 0in

\parindent 6pt

\usepackage[style=numeric,maxbibnames=99]{biblatex}
\addbibresource{ref2.bib}
\usepackage{amsmath,amssymb,amsthm,color}
\usepackage{hyperref,url}
\usepackage{fancybox,graphicx} 
\usepackage[title]{appendix}


 %
 %


\newtheorem{theorem}{Theorem}[section]
\newtheorem{lemma}[theorem]{Lemma}
\newtheorem{corollary}[theorem]{Corollary}

\newtheorem{assumption}[theorem]{Assumption}

\newtheorem{remark}[theorem]{Remark}





\newcommand{\XX}{{\cal X}}
\newcommand{\YY}{{\cal Y}}
\newcommand{\ZZ}{{\cal Z}}

\newcommand{\RR}{\mathbb R}
\newcommand{\EE}{\mathbb E}

\newcommand{\Rmnum}[1]{\uppercase\expandafter{\romannumeral #1}} 
\usepackage[raggedright]{titlesec}
\titleformat{\chapter}{\centering\Huge\bfseries}{Chapter \Rmnum{\thechapter} }{1em}{} 

\usepackage{mathrsfs} 
\usepackage{enumerate} 
\graphicspath{{figures/}}

\usepackage{algorithm}
\usepackage{algorithmicx}
\usepackage{multirow}
\usepackage{amsmath}
\usepackage{stmaryrd}
\usepackage{float}
\usepackage{graphicx}
\usepackage{subfigure}
\usepackage{xcolor}
\usepackage{algpseudocode}
\usepackage{array}


\usepackage{color}

\begin{document}
\title{An Accelerated Variance Reduced Extra-Point Approach to Finite-Sum Hemivariational Inequality Problem\footnote{This is the second version of the aiXiv report entitled ``An accelerated variance reduced extra-point approach to finite-sum VI and optimization'' (2211.03269). In this new version, the organization, analysis, as well as the numerical experiments are updated, and we adopt the current new title.}} 

\author{
Kevin Huang\thanks{Institute of Industrial Engineering, National Taiwan University, kdhuang@ntu.edu.tw}
\hspace{1cm}
Nuozhou Wang\thanks{Department of Industrial and System Engineering, University of Minnesota, wang9886@umn.edu}
\hspace{1cm}
Shuzhong Zhang\thanks{Department of Industrial and System Engineering, University of Minnesota, zhangs@umn.edu}
}

\date{\today}
\maketitle

\begin{abstract}
    In this paper, we develop stochastic variance reduced algorithms for solving a class of {\it finite-sum} hemivariational inequality (HVI) problem. In this HVI problem, the associated function is assumed to be differentiable, and both the vector mapping and the function are of finite-sum structure. We propose two algorithms to solve the cases when the vector mapping is either merely monotone or strongly monotone, while the function is assumed to be convex. We show how to apply variance reduction in the proposed algorithms when such an HVI problem 
    has a finite-sum structure, and the resulting accelerated gradient complexities can match the best bound established for finite-sum VI problem, as well as the bound given by the direct Katyusha for finite-sum optimization respectively, in terms of the corresponding parameters such as (gradient) Lipschitz constants and the sizes of the finite-sums. We demonstrate the application of our algorithms through solving a finite-sum constrained finite-sum optimization problem and provide preliminary numerical results. 
    
    \vspace{3mm}

    \noindent\textbf{Keywords:} finite-sum optimization, variance reduction method, variational inequalities, hemivariational inequalities.
\end{abstract}

\section{Introduction}
Let $\ZZ\subseteq \RR^n$ be a closed convex set, $H:\RR^n\rightarrow\RR^n$ and $g:\RR^n\rightarrow\RR$, the so-called hemivariational inequality (HVI) problem is to find $x^*\in\ZZ$ such that
\begin{eqnarray}
    \langle H(x^*),x-x^*\rangle+g(x)-g(x^*)\ge0,\quad\forall x\in\ZZ.\label{HVI-problem}
\end{eqnarray}
In this paper, we are interested in solving the HVI problem~\eqref{HVI-problem} when the mapping $H$ and function $g$ are {\it both} in the {\it finite-sum} form:
\begin{eqnarray}
    H(x):=\sum\limits_{i=1}^{m_1}H_i(x),\quad g(x):=\sum\limits_{i=1}^{m_2}g_i(x).\label{finite-sum H and g}
\end{eqnarray}

\subsection{The HVI problem}
The HVI problem, or sometimes referred to as the variational inequalities (VI) of the second kind~\cite{facchinei2014vi}, can be viewed as a more general class of VI problems when $g(x)=0$ for all $x$. The VI models are particularly powerful for computing equilibria of various types, with applications stemming from economics, applied engineering, and non-cooperative games~\cite{facchinei2007finite}. On the other hand, the HVI formulation, while originally motivated from the infinite-dimensional domain~\cite{panagiotopoulos1993hemivariational, naniewicz1994mathematical, motreanu2013minimax}, has recently received attention under the framework of mathematical programming due to its capability to model and to solve certain classes of saddle point problems and constrained optimization problems more efficiently; see e.g.~\cite{jofre2007variational, monteiro2011complexity}. In a standard setting, $H$ is assumed to be a monotone Lipschitz continuous mapping:
\begin{eqnarray}
    \langle H(x)-H(y),x-y\rangle\ge\mu\|x-y\|^2,\quad \|H(x)-H(y)\|\le L\|x-y\|,\quad \forall x,y\in\ZZ\label{monotone and lipschitz}
\end{eqnarray}
for some $L\ge \mu\ge 0$, while $g$ is assumed to be a proper convex lower semicontinuous function. This allows $g$ to account for the non-smoothness part of the problem, which is more often expressed in the general form of {\it monotone inclusion} problem:
\begin{eqnarray}
    \mbox{find $x^*\in\RR^n$ such that}\,\,0\in (H+\partial g)(x^*),\label{monotone-inclusion-prob}
\end{eqnarray}
where $\partial g(x)$ is the subdifferential set of $g$ at $x$. In formulation~\eqref{monotone-inclusion-prob}, the presence of the constraint $\ZZ$ is unnecessary since it is already implicitly incorporated by having either $g$ as an indicator function with respect to $\ZZ$ or $\partial g(x)$ include the normal cone of $\ZZ$ at $x$. A useful example would be to solve the smooth convex optimization with functional constraints. See the problem discussed in Section~\ref{sec:finite-sum-constrained} as an example. Numerical algorithms for solving~\eqref{monotone-inclusion-prob} often include the prox-mappings with respect to $g$ throughout the iterations, which is assumed to be easily computable.

In this paper, however, we adopt a setting different from the commonly assumed for hemivariational inequality as described above. In particular, while we still assume $H$ to be a monotone Lipschitz continuous mapping, we consider $g$ to be convex {\it differentiable} with Lipschitz continuous gradient. The consequence of this setting is the need to re-introduce the constraint set $\ZZ$, which brings us back to the first problem formulation~\eqref{HVI-problem}. We use $L_h$ and $L_g$ to distinguish between the Lipschitz constants for mapping $H$ and $\nabla g$, that is,
\begin{eqnarray}
    \|H(x)-H(y)\|\le L_h\|x-y\|,\quad \|\nabla g(x)-\nabla g(y)\|\le L_g\|x-y\|,\quad \forall x,y\in\ZZ.\nonumber
\end{eqnarray}
In fact, under our setting, the HVI problem in~\eqref{HVI-problem} can be equivalently reformulated as the following VI problem:
\begin{eqnarray}
    \mbox{find $x^*\in\ZZ$ such that}\,\,\langle H(x^*)+\nabla g(x^*),x-x^*\rangle\ge 0,\quad\forall x\in\ZZ.\label{HVI-prob-2}
\end{eqnarray}
Indeed, a solution to~\eqref{HVI-prob-2} implies a solution to~\eqref{HVI-problem} due to the convexity of $g$. To show the opposite, we have to involve the monotonicity of $H$ as well. Using the fact that
\begin{eqnarray}
    \langle H(x),x-x^*\rangle\ge\langle H(x^*),x-x^*\rangle,\quad g(x)-g(x^*)\le \langle\nabla g(x),x-x^*\rangle\nonumber,
\end{eqnarray}
we have
\begin{eqnarray}
    \langle H(x)+\nabla g(x),x-x^*\rangle\ge \langle H(x^*),x-x^*\rangle+g(x)-g(x^*)\ge0,\quad\forall x\in\ZZ.
\end{eqnarray}
That is, a solution $x^*$ to~\eqref{HVI-problem} is also a {\it Minty solution}~\cite{minty1973monotone} 
(aka a 
{\it weak} solution) to the VI problem associated with mapping $F(x):=H(x)+\nabla g(x)$ and constraint $\ZZ$, satisfying
\[
\langle F(x),x-x^*\rangle\ge0,\quad\forall x\in\ZZ.
\]
Since $F$ is continuous and $\ZZ$ is non-empty, closed and convex, by 
a well-known 
result due to~\cite{minty1973monotone}, every Minty solution $x^*$ is a regular (strong) VI solution satisfying
\begin{eqnarray}
    \langle F(x^*),x-x^*\rangle\ge0,\quad\forall x\in\ZZ,\label{VI-prob-1}
\end{eqnarray}
which is exactly~\eqref{HVI-prob-2}.

Compared to the more common setting in HVI problems where $g$ is only lower semicontinuous, the differentiability of $g$ (and the Lipschitz continuity of the gradient) can bring us positive effects when applying numerical algorithms for solving approximated solutions. While incorporating the prox-mapping of $g$ is more likely than not a ``must'' to an algorithm in the former setting, explicitly exploiting the gradient mapping $\nabla g(x)$ in the algorithm is in fact a ``plus'' in the latter setting. While one can naively solve the HVI problem~\eqref{HVI-problem} through solving the VI problem~\eqref{VI-prob-1} with $F(x):=H(x)+\nabla g(x)$, as we have shown the equivalence between the two, the iteration complexity turns out to be suboptimal even when applying the ``optimal'' first-order methods such as the extra-gradient method~\cite{korpelevich1976extragradient}, optimistic gradient descent ascent method~\cite{popov1980modification, mokhtari2020convergence}, dual extrapolation method~\cite{nesterov2007dual}, among others. The aforementioned optimal first-order methods for solving the general VI problem~\eqref{VI-prob-1} generate $\epsilon$-solutions in at most $\mathcal{O}(1/\epsilon)$ iterations for monotone $H$ and $\mathcal{O}(\kappa\ln(1/\epsilon))$ iterations for strongly monotone $H$ (when there exists $\mu>0$ in~\eqref{monotone and lipschitz}), where $\kappa$ is the condition number defined as $\kappa:=\frac{L}{\mu}$ for $\mu>0$. These iteration complexities have been proven optimal~\cite{zhang2021lower} in terms of $L,\mu,\epsilon$ for solving the general VI problem~\eqref{VI-prob-1}, but if we are faced with the HVI problem~\eqref{HVI-problem} (equivalently~\eqref{HVI-prob-2}) and define $L:=L_h+L_g$, then these methods are no longer optimal in terms of dependency on $L_g$. 

The pioneering work on designing accelerated algorithm for the HVI problem under this setting is~\cite{chen2017accelerated}, where the authors propose a stochastic accelerated mirror-prox method (SAMP) to solve the VI problem in the form
\begin{eqnarray}
    \mbox{find $x^*\in\ZZ$ such that }\langle H(x)+\nabla g(x)+p'(x),x-x^*\rangle\ge0,\quad\forall x\in\ZZ,\label{nonsmooth-HVI}
\end{eqnarray}
where $p'(x)\in\partial p(x)$ is a subgradient of a relatively simple convex function $p(x)$. In particular, they assume the mappings $H(x)$ and $\nabla g(x)$ can only be evaluated through unbiased stochastic estimators $H(x;\xi)$ and $\nabla g(x;\zeta)$ with bounded variance $\sigma^2$:
\begin{eqnarray}
    \EE\left[\|H(x;\xi)-H(x)\|^2\right]\le \sigma^2,\quad \EE\left[\|\nabla g(x;\zeta)-\nabla g(x)\|^2\right]\le \sigma^2.\label{constant-variance-bound}
\end{eqnarray}
The proposed algorithm SAMP~\cite{chen2017accelerated} can achieve the iteration complexity of
\begin{eqnarray}
    \mathcal{O}\left(\sqrt{\frac{L_g}{\epsilon}}+\frac{L_h}{\epsilon}+\frac{\sigma^2}{\epsilon^2}\right).\label{optimal-nonsmooth-HVI}
\end{eqnarray}

In a recent work~\cite{lan2021mirror} considering a similar problem to~\eqref{nonsmooth-HVI}, the authors further improve the complexities from~\eqref{optimal-nonsmooth-HVI} to
\begin{eqnarray}
    \mathcal{O}\left(\sqrt{\frac{L_g}{\epsilon}}\right)\,\mbox{for $\nabla g$, and}\,\, \mathcal{O}\left(\sqrt{\frac{L_g}{\epsilon}}+\frac{L_h}{\epsilon}+\frac{\sigma^2}{\epsilon^2}\right)\,\mbox{for $H$},\label{optimal-grad-complexity-nonsmooth-HVI}
\end{eqnarray}
with the proposed mirror-prox sliding (MPS) method. We note that the problem in~\cite{lan2021mirror} has slightly different settings than~\cite{chen2017accelerated}, where $\nabla g$ is deterministic and $H$ is stochastic with variance bound as in~\eqref{constant-variance-bound}. In particular the algorithm consists of a double-loop structure, where a new estimation of $\nabla g$ is only obtained at the start of each new outer-loop and remains the same in each iterations in the inner-loop. Therefore, the algorithm is effectively skipping estimations of $\nabla g$ from time to time and is able to retrieve the same optimal (gradient) complexities for a pure optimization problem.

In view of the structure given in the problem~\eqref{nonsmooth-HVI}, the iteration complexity~\eqref{optimal-nonsmooth-HVI} indeed matches the lower bound~\cite{chen2017accelerated} hence optimal. When it comes to {\it gradient complexity}, the result~\eqref{optimal-grad-complexity-nonsmooth-HVI} given in~\cite{lan2021mirror} is optimal. Due to the mapping $p'(x)$ in~\eqref{nonsmooth-HVI} which is not necessarily continuous, solving~\eqref{nonsmooth-HVI} (with solution $x^*$) does not guarantee solving the HVI problem:
\begin{eqnarray}
    \langle H(x^*),x-x^*\rangle+g(x)-g(x^*)+p(x)-p(x^*)\ge0,\quad\forall x\in\ZZ\nonumber
\end{eqnarray}
ending with the same $x^*$ (while the reverse is true). In this paper, we do not consider the presence of the possibly nonsmooth function $p(x)$, which does not impose any noticeable influence on the convergence results. The main difference will be changing from performing prox-mapping on $p$ to projections onto the constraint set directly, which are both assumed to be easily executable in 
this context. 

\subsection{The finite-sum HVI problem}
In this paper, we investigate a specific case when random sampling of the mapping $H$ and $\nabla g$ is necessary. That is, when $H$ and $g$ are both in the finite-sum structure~\eqref{finite-sum H and g}.
Solving finite-sum problem is originally motivated from large-scale machine learning problems, in which a commonly encountered optimization problem is the so-called finite-sum optimization:
\begin{eqnarray}
    \min\limits_{x\in\XX}\,\, g(x):=\sum\limits_{i=1}^mg_i(x),\label{finite-sum-opt}
\end{eqnarray}
where the objective consists of the sum of finitely many (convex) loss functions. When the total number of functions (namely $m$) is large, it can be costly for a deterministic gradient method to evaluate the gradients of all the functions in each iteration. A conventional way for solving the finite-sum model \eqref{finite-sum-opt} is through stochastic gradient descent (SGD), where in each iteration only one or a mini-batch of functions are randomly chosen and the corresponding gradients are estimated. While SGD may improve the overall gradient complexity over the deterministic methods, the iteration complexity to obtain an $\epsilon$-solution is only $\mathcal{O}\left(\frac{1}{\epsilon}\right)$ even if each of the function $g_i(x)$ is strongly convex and smooth. Similarly, when the HVI problem~\eqref{HVI-problem} is faced with the finite-sum structure~\eqref{finite-sum H and g}, taking only single (or mini-batch) sample each iteration for $H$ and $\nabla g$ can result in suboptimal gradient complexity in terms of the number of finite-sum components $m_1$ and $m_2$. In particular, if simply assuming $m=m_1=m_2$, the constant variance bounds in~\eqref{constant-variance-bound} will deteriorate by a factor of $m^2$, then the iteration complexity of SAMP (same as gradient complexity due to constant samples in each iteration) will become
\begin{eqnarray}
    \mathcal{O}\left(\sqrt{\frac{L_g}{\epsilon}}+\frac{L_h}{\epsilon}+\frac{m^2\sigma^2}{\epsilon^2}\right),\label{finite-sum-HVI-complexity}
\end{eqnarray}
which can be much less attractive as the two large terms $m^2$ and $\epsilon^{-2}$ combine.

Thus, it becomes imperative to apply {\it variance reduction} in order to alleviate the dependency on the potentially very large numbers $m_1$ and $m_2$. Variance reduction techniques are first developed for finite-sum optimization to remedy the suboptimality of SGD. Methods such as SAG \cite{schmidt2017minimizing}, SAGA \cite{defazio2014saga}, SVRG \cite{johnson2013accelerating} achieve the gradient complexity $\mathcal{O}\left(\left(m+\frac{L}{\mu}\right)\log\frac{1}{\epsilon}\right)$, assuming each function $g_i(x)$ in~\eqref{finite-sum-opt} is strongly convex with modulus $\mu>0$ and gradient Lipschitz continuous with constant $L\ge\mu$. Further acceleration for variance reduced algorithms is accomplished by Katyusha \cite{allen2017katyusha} and SSNM \cite{zhou2019direct} with gradient complexity $\mathcal{O}\left(\left(m+\sqrt \frac{mL}{\mu}\right)\log\frac{1}{\epsilon}\right)$ for strongly convex $g_i(x)$ and $\mathcal{O}\left(m\sqrt{\frac{1}{\epsilon}}+\sqrt{\frac{mL}{\epsilon}}\right)$ for merely convex $g_i(x)$~\cite{allen2017katyusha}. See also similar results for RPDG~\cite{lan2018optimal}, Catalyst~\cite{lin2015universal}, RGEM~\cite{lan2018random}. In particular, the accelerated variance reduced algorithms in these previous work are optimal for the strongly convex case, but are still suboptimal in view of the lower bound gradient complexity $\Omega\left(m+\sqrt{\frac{mL}{\epsilon}}\right)$ established in~\cite{lan2018optimal}. On the other hand, the work in~\cite{lan2019unified} proposed a unified method Varag (unifying the cases for convex and strongly convex cases), which is first to obtain a near-optimal gradient complexity $\mathcal{O}\left(m\log m+\sqrt{\frac{mL}{\epsilon}}\right)$ for the convex case, which only differs by a log factor from the lower bound.

Turning the focus to (hemi)VI problems, variance reduction techniques have also been incorporated into conventional first-order (stochastic) VI algorithms when the finite-sum structure is considered. In~\cite{alacaoglu2022stochastic}, they consider the HVI problem~\eqref{HVI-problem} when $g$ is lower semicontinuous and the finite-sum manifests in $H$. The various variance reduced algorithms proposed therein have gradient complexities $\mathcal{O}\left(m+\frac{\sqrt{m}L}{\epsilon}\right)$ for monotone $H(x)$ and $\mathcal{O}\left(\left(m+\frac{\sqrt{m}L}{\mu}\right)\log\frac{1}{\epsilon}\right)$ for strongly monotone $H(x)$ (with modulus $\mu>0$ and Lipschitz constant $L\ge\mu$). These gradient complexities are optimal for strongly monotone problem and near-optimal for monotone problem in view of the lower bounds established in~\cite{xie2020lower}.

In the setting of this paper, we consider the HVI problem~\eqref{HVI-problem} when $g$ is gradient Lipschitz continuous {\it and} in the finite-sum form~\eqref{finite-sum H and g}. In particular, we may assume each $H_i(x)$ (resp.\ $\nabla g_i(x)$) is Lipschitz continuous with constant $L_{h(i)}$ (resp.\ $L_{g(i)}$) and define $L_h:=\sum\limits_{i=1}^{m_1}L_h(i)$ (resp.\ $L_g:=\sum\limits_{i=1}^{m_2}L_{g(i)}$). On the one hand, one would definitely look for an algorithm that can achieve better (optimal) dependency on $L_g$ such as SAMP in~\eqref{optimal-nonsmooth-HVI}. On the other hand, due to the finite-sum structure, applying variance reduction in the algorithm is also necessary to avoid poor dependency on $m$ (specifically, $m_1$ and $m_2$) such as in~\eqref{finite-sum-HVI-complexity}. A natural question arises: Is there an algorithm that can deal with both aforementioned aspects and provide improved gradient complexity results in the framework of HVI problem with settings considered in this paper? 
As far as we know, no such algorithm was established 
in the literature yet. This motivates the work in this paper, which provides an affirmative answer to this question. We propose two algorithms for solving the following two finite-sum HVI problems: 
(1) the {\bf S}tochastic {\bf A}ccelerated {\bf V}ariance {\bf R}educed {\bf E}xtra {\bf P}oint method {for {\bf m}onotone} (SAVREP-m) for the setting when $H(x)$ is  merely monotone in Section~\ref{sec:monotone};  
(2) SAVREP for the setting when $H(x)$ is strongly monotone in Section~\ref{sec:strong-monotone}.

\subsection{Main results and the contributions of the paper}

\begin{table}[htbp!]
    \centering
    {
    \begin{tabular}{|m{7em}||m{5.5em}|c||c|}
    \hline
  Related Work  & Problem & Strongly Convex/Monotone & Convex/Monotone \\
    \hline\hline
        $\mbox{Katyusha}^{\scriptsize \mbox{ns}}$ \cite{allen2017katyusha} & {\small Optimization $(m_1=0)$} &$\left(m_2+\sqrt{\frac{m_2L_g}{\mu}}\right)\log\frac{1}{\epsilon}$&$m_2\sqrt{\frac{1}{\epsilon}}+\sqrt{\frac{m_2L_g}{\epsilon}}$ \\
         \hline
         Varag~\cite{lan2019unified}& {\small Optimization $(m_1=0)$} &$\left(m_2+\sqrt{\frac{m_2L_g}{\mu}}\right)\log\frac{1}{\epsilon}$&$m_2\log m_2+\sqrt{\frac{m_2L_g}{\epsilon}}$ \\
         \hline
         {\small Lower Bound \cite{lan2018optimal, woodworth2016tight}}  & {\small Optimization $(m_1=0)$} & $\left(m_2+\sqrt{\frac{m_2L_g}{\mu}}\right)\log\frac{1}{\epsilon}$ & $m_2+\sqrt{\frac{m_2L_g}{\epsilon}}$\\
         \hline\hline
         &&&\\
         {SAMP~\cite{chen2017accelerated}} & {\small HVI ($m_1{=}m_2{=}1$) }&{\small$\left(\frac{L_h}{\mu}+\sqrt{\frac{L_g}{\mu}}\right)\log\frac{1}{\epsilon}$}&{\small$\frac{L_h}{\epsilon}+\sqrt{\frac{L_g}{\epsilon}}$}\\
         &&&\\
         \hline
         &&&\\
         {\scriptsize Alacaoglu and Malitsky \cite{alacaoglu2022stochastic}}& {\small HVI $(m_1,m_2{\gg}1)$} & {\small$\left(m_1+m_2+\frac{\sqrt{m_1+m_2}(L_h+L_g)}{\mu}\right)\log\frac{1}{\epsilon}$}& $m_1+m_2+\frac{\sqrt{m_1+m_2}(L_h+L_g)}{\epsilon}$\\
         &&&\\
         \hline
         &&&\\
         {\bf This work} & {\small HVI ($m_1,m_2{\gg}1$)}&{\small$\left(m_1+m_2+\frac{\sqrt{m_1}L_h}{\mu}+\sqrt{\frac{m_2L_g}{\mu}}\right)\log\frac{1}{\epsilon}$}&{\small$m_1+\frac{\sqrt{m_1}L_h}{\epsilon}+m_2\sqrt{\frac{1}{\epsilon}}+\sqrt{\frac{m_2L_g}{\epsilon}}$}\\
         &&&\\
         \hline
    \end{tabular}
    }
    \caption{Comparison of gradient complexities and the lower bound. The $\mathcal{O}(\cdot)$ notation for the upper bounds and $\Omega(\cdot)$ notation for the lower bounds are omitted. 
    }
    \label{tab:literature}
\end{table}

The main contributions of this paper are as follows.
\begin{itemize}
    \item We consider a finite-sum HVI problem~\eqref{HVI-problem} where the mapping $H$ is (strongly) monotone and Lipschitz continuous, and the function $g$ is convex, differentiable and {\it gradient Lipschitz continuous}. In particular, {\it both} $H$ and $g$ consist of finite-sum of component mappings (functions). 
    \item We propose two variance reduced algorithms for the finite-sum HVI problem of this kind, which are first in the literature as far as our knowledge goes. The first algorithm is for the setting when $H$ is monotone while the second algorithm is for the setting when $H$ is strongly monotone.
    \item The gradient complexity results in this paper can be interpreted as matching 
    the bounds for accelerated methods in various ways. 
    When the problem is {\it not} of finite-sum structure, i.e.\ $m_1=m_2=1$, then the {\it iteration complexities} match the optimal results for the HVI problem for $H$ being either strongly monotone~\cite{huang2021unifying} or monotone~\cite{chen2017accelerated} (the {\it gradient complexity} for $\nabla g$ is still improvable in view of~\cite{lan2021mirror}). When the finite-sum HVI problem is reduced to a regular finite-sum VI problem (i.e.\ $m_2=0$  and $L_g=0$), then the gradient complexities coincide with the best results as in~\cite{alacaoglu2022stochastic} for either strongly monotone or monotone $H$. On the other hand, for $m_1=0$ where the monotone mapping $H$ is null, the gradient complexities coincide with the results for the {\it direct} Katyusha ($\mbox{Katyusha}^{\scriptsize \mbox{ns}}$)~\cite{allen2017katyusha} for either strongly convex or convex objective function. The results for the above correspondences are summarized in Table~\ref{tab:literature}. We also remark that methods such as Katyusha~\cite{allen2017katyusha} (and RPDG~\cite{lan2018optimal}, Catalyst~\cite{lin2015universal}) can have near-optimal gradient complexities for convex problems by adding strongly convex perturbations~\cite{allen2016optimal} and applying their variants for solving strongly convex problems. Since our proposed method SAVREP-m is itself a direct method under the monotone setting, the comparison is only made with the direct methods such as $\mbox{Katyusha}^{\scriptsize \mbox{ns}}$ and Varag~\cite{lan2019unified}. 
    \item We discuss an application of our methods to finite-sum convex optimization with finite-sum constraints through reformulating the problem into a constrained saddle-point problem. We demonstrate potential advantages of exploiting the structure of the problem by differentiating gradient mappings from the general vector mappings (such as in the proposed algorithms) in {the numerical experiments}.
\end{itemize}

Finally, we remark that while our results match the corresponding bounds for existing accelerated methods for either finite-sum optimization~\cite{allen2017katyusha} and finite-sum VI problem~\cite{alacaoglu2022stochastic} for the respective components, there is still room for potential improvements. In particular, the work in~\cite{lan2019unified} has demonstrated an accelerated method that is near-optimal for finite-sum optimization with merely convex objective function, which is a major improvement in terms of $m_2\sqrt{\frac{1}{\epsilon}}$ given by $\mbox{Katyusha}^{\scriptsize \mbox{ns}}$. Currently our results for monotone VI mapping only matches the bound given by $\mbox{Katyusha}^{\scriptsize \mbox{ns}}$ but not Varag~\cite{lan2019unified}. Furthermore, the work in~\cite{lan2021mirror} also demonstrates the possibilities of further reducing the gradient complexities for the mapping $\nabla g_i(\cdot)$ such that it is independent of $L_h$, for another type of (non-finite-sum) HVI problem~\eqref{nonsmooth-HVI} with monotone mappings. While it requires further study to show how these improvements can be achieved in the finite-sum HVI settings considered in this paper, 
the results in this paper can be seen as a step toward optimal bounds.

\subsection{Organization of the paper}

The rest of the paper is organized as follows. In Section \ref{sec:monotone}, we propose a stochastic variance reduced algorithm for the HVI problem~\eqref{HVI-problem} when $H(\cdot)$ is monotone and $g(\cdot)$ is convex. In Section \ref{sec:strong-monotone}, we provide an alternative variance reduced method to solve the case when $H(\cdot)$ is strongly monotone and $g(\cdot)$ is convex. In Section \ref{sec:finite-sum-constrained}, we demonstrate the application to solving finite-sum convex optimization with finite-sum inequality constraints. We present numerical results in Section \ref{sec:numerical} and conclude the paper in Section \ref{sec:conclusion}.

\section{Variance Reduced Scheme for Finite-Sum HVI: Monotone \texorpdfstring{$H(x)$}{H(x)} and Convex \texorpdfstring{$g(x)$}{g(x)}}
\label{sec:monotone}

In this section, we present our first variance reduced scheme for solving the HVI~\eqref{HVI-problem}, where {\it both} the mapping $H(x)$ and the function $g(x)$ take the finite-sum structure in~\eqref{finite-sum H and g}. We assume the constraint set $\ZZ$ to be closed and convex, and the problem is summarized below:
\begin{eqnarray}
\left\{
\begin{array}{ll}
     \mbox{find $x^*\in\ZZ$\,\, s.t.}\quad \langle H(x^*),x-x^*\rangle+g(x)-g(x^*)\ge0,\quad\forall x\in\ZZ, \\
     H(x):=\sum\limits_{i=1}^{m_1}H_i(x),\quad g(x):=\sum\limits_{i=1}^{m_2}g_i(x).
\end{array}
\right.
\label{finite-sum-HVI-summarize}
\end{eqnarray}

We specifically consider the finite-sum mapping $H(\cdot)$ being monotone and each function $g_i(\cdot)$ being convex in this section, and we shall propose an alternative approach for $H(\cdot)$ being {\it strongly monotone} and each $g_i(\cdot)$ being convex in the next section. In both sections, $g(\cdot)$ is assumed to be differentiable. In particular, we assume each $H_i(\cdot)$ to be Lipschitz continuous with constant $L_{h(i)}$, and each $\nabla g_i(x)$ to be Lipschitz continuous with constant $L_{g(i)}$. Let us also define the sum of the Lipschitz constants $L_h:=\sum\limits_{i=1}^{m_1}L_{h(i)}$ and $L_g:=\sum\limits_{i=1}^{m_2}L_{g(i)}$. 

Consider the following update for iteration count $k$ {and non-negative parameters $\alpha_k,\beta_k,\gamma_k$ and $p_1\in[0,1]$}: 

\begin{equation}
\left\{
\begin{array}{lcl}
     \Bar{x}^k&=& (1-p_1)x^k+p_1w^k\\
     {y^k} &=& (1-\alpha_k-\beta_k)v^k+\alpha_k x^k+\beta_k\bar w^k\\
     x^{k+0.5}&=&\arg\min\limits_{x\in\mathcal{Z}}\quad\gamma_k\langle H(w^k)+{\tilde\nabla g(y^k)},x-\Bar{x}^k\rangle+\frac{1}{2}\|x-\Bar{x}^k\|^2\\
     x^{k+1} &=& \arg\min\limits_{x\in\mathcal{Z}}\quad\gamma_k\langle \hat H(x^{k+0.5})+{\tilde\nabla g(y^k)},x-\Bar{x}^k\rangle+\frac{1}{2}\|x-\Bar{x}^k\|^2\\
     { v^{k+1}} &=& {(1-\alpha_k-\beta_k) v^{k}+\alpha_k x^{k+0.5}+\beta_k\bar w^k}\\
     w^{k+1}&=&\left\{
     \begin{array}{ll}
          x^{k+1},& \mbox{with prob. $p_1$} \\
          w^k,    & \mbox{with prob. $1-p_1$}
       
     \end{array}\right.\\
     \bar w^{k+1}&=&\left\{
     \begin{array}{ll}
        \frac{1}{m_2}\sum_{i=k+2-m_2}^{k+1} v^{i}, & \mbox{$m_2|(k+1)$}\\
          \bar w^k, & \mbox{otherwise}.
     \end{array}
     \right.
\end{array}
\right.\label{savrep-m-Update}
\end{equation}
Let us first give explicit definitions for the variance reduced gradient estimators at the corresponding iterates given in \eqref{savrep-m-Update}:
\begin{eqnarray}
&&\hat{{H}}(x^{k+0.5}):={H}(w^k)+{H}_{\xi_k}(x^{k+0.5})-{H}_{\xi_k}(w^k)\label{VR-grad-H}\\
&& \tilde\nabla {g}(y^k):=\nabla {g}(\bar w^k)+\nabla {g_{\zeta_k}}(y^k)-\nabla {g_{\zeta_k}}(\bar w^k).\label{VR-grad-g}
\end{eqnarray}
The above forms follow from the well-established variance reduction literature \cite{allen2017katyusha, alacaoglu2022stochastic}, and the random variables $\xi\,(\zeta)$ take samples from the $m_1\,(m_2)$ individual operators $H_i(\cdot)\,(\nabla g_i(\cdot))$ with probability distribution taking respective Lipschitz constants $L_{h(i)}\,(L_{g(i)})$ into account. In particular, we have
\begin{eqnarray}
\mbox{Pr}\{\xi=i\}=\frac{L_{h(i)}}{L_h} \, := q_i,\,\, i=1,2,...,m_1, \quad \mbox{Pr}\{\zeta=i\}=\frac{L_{g(i)}}{L_g}\, := \pi_i,\,\, i=1,2,...,m_2.\label{sample-prob}
\end{eqnarray}
The stochastic oracles are given by $H_{\xi}(\cdot):=\frac{1}{q_i}H_i(\cdot)$ and $\nabla g_{\zeta}(\cdot)=\frac{1}{\pi_i}\nabla g_i(\cdot)$.

Method \eqref{savrep-m-Update} is a general {\it stochastic variance reduced} scheme for solving \eqref{finite-sum-HVI-summarize}, and is referred to as ``SAVREP-m'' in this paper.
We shall make the following remarks. First, the variance reduction techniques are applied to {\it both} the general vector mapping $H(\cdot)$ and the gradient mapping $\nabla g(\cdot)$, and the resulting update procedure will require using the variance reduced gradient estimator $\hat H(\cdot)$ and $\tilde\nabla g(\cdot)$ respectively. Such variance reduced gradient estimators are also used in the literature for finite-sum optimization~\cite{allen2017katyusha} and for finite-sum VI problem~\cite{alacaoglu2022stochastic}.
In addition, while the update for sequences $x^k$ and $x^{k+0.5}$ is inspired by the famous extra-gradient method~\cite{korpelevich1976extragradient}, the overall update in~\eqref{savrep-m-Update} takes on a more complicated structure with multiple sequences maintained throughout,
which is key to our algorithm, and the derivations of gradient complexity and sample complexity involving the analysis of each of these sequences are discussed in Section \ref{sec:grad-comp-monotone}. Finally, we note the double-loop structure in~\eqref{savrep-m-Update}, which updates $\bar w^{k}$ once every $m_2$ iterations. As a result, the full gradient $\nabla g(\bar w^k)$ is estimated at the beginning of each outer-loop, and such gradient is used to obtain the variance reduced gradient $\tilde\nabla g(y^k)$ within each inner-loop.

\subsection{Gradient complexity analysis}
\label{sec:grad-comp-monotone}

In order to establish a theoretical guarantee for the gradient complexity, we make an additional assumption that the constraint set $\ZZ$ is bounded, which will become unnecessary in the analysis in the next section, where we consider $H(\cdot)$ to be strongly monotone instead. We summarize the assumptions used in this section below.

\begin{assumption}
\label{ass:mon-and-Lip}
    For problem~\eqref{finite-sum-HVI-summarize}, we assume the following: (1) $H(\cdot)$ is monotone with each $H_i(\cdot)$ being Lipschitz continuous with constant $L_{h(i)}$, and we define $L_h:=\sum\limits_{i=1}^{m_1}L_{h(i)}$; (2) Each $g_i(\cdot)$ is convex and Lipschitz smooth with constant $L_{g(i)}$, and we define $L_g:=\sum\limits_{i=1}^{m_2}L_{g(i)}$.

\end{assumption}

\begin{assumption}
\label{ass:monotone-const-bd}
The diameter of the constraint set $\ZZ$ is $\Omega_\ZZ$, i.e., $\sup_{x,y\in \ZZ}\|x-y\|=\Omega_\ZZ.$
\end{assumption}

To simplify the notations in the following analysis, denote the expressions of conditional expectations taken for different random variables:
\begin{eqnarray}
&&\EE_{k_1}[\cdot]:=\EE_{\xi_k}[\cdot|x^k,w^k],\quad \EE_{k_2}[\cdot]:=\EE_{\zeta_k}[\cdot|x^k,\bar w^k,v^k],\label{expectations-1}\\
&& \EE_{k_1+}[\cdot]:=\EE_{\xi_k}[\cdot|x^{k+1},w^k],\quad \EE_{k_2+}[\cdot]:=\EE_{\zeta_k}[\cdot|\bar w^k,v^{k+1}]. \label{expectations-2}
\end{eqnarray}

The gradient complexity analysis of SAVREP-m \eqref{savrep-m-Update} consists of two major steps. In the first step, we first establish one-iteration relation for the vector mapping $H(\cdot)$, followed by one-iteration relation for function $g(\cdot)$, and finally combine the previous two results to establish a one-iteration relation for a function $Q(x;\cdot)$ to be defined later.
In this step, we only consider the iterations from $k$ to $k+1$, which is within a single inner-loop in the update \eqref{savrep-m-Update} with $\bar w^k$ unchanged. In the second step, we derive the relation among iterates after one outer-loop, where the iterations proceed from $sm_2$ to $(s+1)m_2$. This step specifically establishes an inequality relating $\bar w^{(s+1)m_2}$ and $\bar w^{sm_2}$, which eventually guarantees the convergence of the iterate $\bar w^k$ as long as the parameters are chosen to satisfy certain conditions. In particular, the convergence will be in terms of the {\it expected dual gap function} $\EE\left[\max\limits_{x\in\ZZ}\,Q(\bar w^k;x)\right]$. The results derived from the first step are presented in the next three lemmas.

\begin{lemma}
    \label{lem:savrep-m-VI-bd}
    {Consider problem~\eqref{finite-sum-HVI-summarize} with Assumption~\ref{ass:mon-and-Lip}.} For the iterates generated by~\eqref{savrep-m-Update}, the following inequality holds for any $x\in\ZZ$ and for all $k=0,1,2,...$:
    \begin{eqnarray}
        &&\gamma_k\langle H(x)+\tilde\nabla {g}(y^k),x^{k+0.5}-x\rangle\le-d_k(x)+e_{k1}(x)+e_{k2}(x),\nonumber
    \end{eqnarray}
    where
    \begin{eqnarray}
        &&d_k(x):=\frac{1}{2}\left(\|x^{k+1}-x\|^2-(1-p_1)\|x^k-x\|^2-p_1\|w^k-x\|^2+\left(1-p_1\right)\|x^{k+0.5}-x^k\|^2\right),\nonumber\\
        &&e_{k1}(x):=\frac{1}{2}\left(2\gamma_k^2\|{H}_{\xi_k}(x^{k+0.5})-{H}_{\xi_k}(w^k)\|^2-p_1\|x^{k+0.5}-w^k\|^2-\frac{1}{2}\|x^{k+1}-x^{k+0.5}\|^2\right),\nonumber\\
        &&e_{k2}(x):=\gamma_k\langle\hat {H}(x^{k+0.5})-H(x^{k+0.5}),x-x^{k+0.5}\rangle.\nonumber
    \end{eqnarray}

    \begin{proof}
        See Appendix~\ref{proof:savrep-m-VI-bd}.
    \end{proof}
\end{lemma}

\begin{lemma}
    \label{lem:savrep-m-g-bd}
    {Consider problem~\eqref{finite-sum-HVI-summarize} with Assumption~\ref{ass:mon-and-Lip}.} For the iterates generated by~\eqref{savrep-m-Update}, {suppose the condition $1-\alpha_k-\beta_k\ge0$ holds for all $k=0,1,2,...$,} then the following inequality holds for any $x\in\ZZ$ and for all $k=0,1,2,...$:
    \begin{eqnarray}
        g(v^{k+1})-g(x)&\le& (1-\alpha_k-\beta_k)\left(g(v^k)-g(x)\right)+\beta_k\left(g(\bar w^k)-g(x)\right)+\alpha_k\langle\tilde \nabla g(y^k),x^{k+0.5}-x\rangle\nonumber\\
        &&+\left(\frac{\alpha_k^2L_g}{2}+\frac{\alpha_k^2L_g}{\beta_k}\right)\|x^{k+0.5}-x^k\|^2+\alpha_k e_{k3}(x)\nonumber
    \end{eqnarray}
    where
    \begin{eqnarray}
        e_{k3}(x)&:=& \langle \nabla g(y^k)-\tilde\nabla g(y^k),x^{k+0.5}-x\rangle - \frac{\beta_k}{\alpha_k}\left(g(\bar w^k)-g(y^k)-\langle\nabla g(y^k),\bar w^k-y^k\rangle\right)\nonumber\\
        &&-\frac{\alpha_kL_g}{\beta_k}\|x^{k+0.5}-x^k\|^2.\nonumber
    \end{eqnarray}

    \begin{proof}
        See Appendix~\ref{proof:savrep-m-g-bd}.
    \end{proof}
\end{lemma}

{
Now we shall combine the previous two results and derive a one-iteration relation involving the following function:
\begin{eqnarray}
Q(x';x):= \langle H(x),x'-x\rangle+g(x')-g(x).\label{Q-function}
\end{eqnarray}
It can be easily verified that $\max\limits_{x\in\ZZ}\,Q(x';x)$ serves as a merit function (the dual gap function) to our problem~\eqref{finite-sum-HVI-summarize}. In the context of finite sums, we will use the expected dual gap function $\EE\left[\max\limits_{x\in \ZZ}\,Q(x';x)\right]$ to establish the convergence as shown later. 
}

\begin{lemma}
\label{lem:savrep-m-inner-relation}
{Consider problem~\eqref{finite-sum-HVI-summarize} with Assumption~\ref{ass:mon-and-Lip}.} For the iterates generated by \eqref{savrep-m-Update}, assume the following condition holds for all $k=0,1,2,...$:
{
\begin{equation}
    \label{const-3}
    \left\{
    \begin{array}{ll}
        1-p_1-\alpha_k\gamma_k L_g-\frac{2\alpha_k \gamma_k L_g}{\beta_k}\ge0,\\
        1-\alpha_k-\beta_k\ge 0.
    \end{array}
    \right.
\end{equation}
}
Then the following inequality holds for all $k=0,1,2,...$:
{
\begin{eqnarray}
&&Q(v^{k+1};x)+\frac{\alpha_k}{2\gamma_k}\left((1-p_1)\|x^{k+1}-x\|^2+\|w^{k+1}-x\|^2\right)\label{mu-equal-zero-bd} \\
&\le&(1-\alpha_k-\beta_k)Q(v^k;x)+\beta_k Q(\bar w^k;x)+\frac{\alpha_k}{2\gamma_k}\left(\left(1-p_1\right)\|x^k-x\|^2+\|w^k-x\|^2\right)\nonumber\\
&&+\frac{\alpha_k}{\gamma_k}\bar e_k(x).\nonumber 
\nonumber
\end{eqnarray}
where $\bar e_k(x):=e_{k1}(x)+e_{k2}(x)+\gamma_ke_{k3}(x)+\frac{1}{2}e_{k4}(x)$ with the first three terms defined in Lemma~\ref{lem:savrep-m-VI-bd} and Lemma~\ref{lem:savrep-m-g-bd}, and $e_{k4}(x)$ defined as follows:
\begin{eqnarray}
    && e_{k4}(x):= \|w^{k+1}-x\|^2-p_1\|x^{k+1}-x\|^2-(1-p_1)\|w^k-x\|^2.\label{error-terms}
\end{eqnarray}

\begin{proof}
See Appendix~\ref{proof:savrep-m-inner-relation}.
\end{proof}
}
\end{lemma}

{Now we shall proceed to the second step of the analysis.} To simplify the notations in the derivations that follow, define:
{
\[
V_k(x):=(1-p_1)\|x^k-x\|^2+\|w^k-x\|^2.
\]
}
Note that while Lemma \ref{lem:savrep-m-inner-relation} establishes the relation of iterates between iteration $k$ and $k+1$, $\bar w^k$ remains unchanged (unless $m_2|k+1$). Since $\bar w^k$ plays the central role in the convergence under the monotone case, we have to extend the result in \eqref{mu-equal-zero-bd} to iterations between $sm_2$ and $(s+1)m_2$, where $s$ denotes the number of outer-loops (or {\it epochs}). In particular, we assume that the parameters $\alpha_k,\beta_k,\gamma_k$ are also unchanged within each interval of updating $\bar w^k$, i.e.\, $\alpha_{sm_2}=\alpha_{sm_2+1}=\cdots=\alpha_{(s+1)m_2-1}$,  $\beta_{sm_2}=\beta_{sm_2+1}=\cdots=\beta_{(s+1)m_2-1}$, and $\gamma_{sm_2}=\gamma_{sm_2+1}=\cdots=\gamma_{(s+1)m_2-1}$. Then, by summing up inequality \eqref{mu-equal-zero-bd} from $k=sm_2$ to $k=(s+1)m_2-1$, we get
{
\begin{eqnarray}
&&Q(v^{(s+1)m_2};x)+(\alpha_{sm_2}+\beta_{sm_2})\sum_{k=sm_2+1}^{(s+1)m_2-1}Q(v^{k};x)+\frac{\alpha_{sm_2}}{2\gamma_{sm_2}}V_{(s+1)m_2}(x)\nonumber\\
&\le&
(1-\alpha_{sm_2}-\beta_{sm_2} )Q(v^{sm_2};x)+\beta_{sm_2}m_2 Q(\bar{w}^{sm_2};x)+\frac{\alpha_{sm_2}}{2\gamma_{sm_2}}V_{sm_2}(x)+\sum_{k=sm_2}^{(s+1)m_2-1}\frac{\alpha_{sm_2}}{\gamma_{sm_2}}\bar e_k(x)\nonumber\label{eq-non-25}\\
&\le&
(1-\alpha_{sm_2})Q(v^{sm_2};x)+\beta_{sm_2} \sum_{k=(s-1)m_2+1}^{sm_2-1}Q(v^{k};x)+\frac{\alpha_{sm_2}}{2\gamma_{sm_2}}V_{sm_2}(x)+\sum_{k=sm_2}^{(s+1)m_2-1}\frac{\alpha_{sm_2}}{\gamma_{sm_2}}\bar e_k(x)\nonumber.\\
&&\label{eq-non-3}
\end{eqnarray}
}
The last inequality is due to $\sum_{k=(s-1)m_2+1}^{sm_2}Q(v^{k};x)\ge m_2 Q(\bar{w}^{sm_2};x)$, which is from the definition $\bar{w}^{sm_2}=\frac{1}{m_2}\sum_{i=(s-1)m_2+1}^{sm_2} v^{i}$ and the fact that $Q(\cdot;x)$ is convex ($Q(x';x):= \langle H(x),x'-x\rangle+g(x')-g(x)$ and $g$ is convex).

Let us define
\[\Gamma_{s}=\left\{\begin{array}{ll}
1, & \text { when } s=0 \\
(1-\alpha_{(s-1)m_2}) \Gamma_{s-1}, & \text { when } s>0.
\end{array}\right.\]

{
Dividing both sides with $\Gamma_{s+1}$ in~\eqref{eq-non-3}:
\begin{eqnarray}
&&\frac{1}{\Gamma_{s+1}}Q(v^{(s+1)m_2};x)+\frac{\alpha_{sm_2}+\beta_{sm_2}}{\Gamma_{s+1}}\sum_{k=sm_2+1}^{(s+1)m_2-1}Q(v^{k};x)\nonumber\\
&\le&
\frac{1}{\Gamma_{s}}Q(v^{sm_2};x)+\frac{\beta_{sm_2}}{\Gamma_{s+1}} \sum_{k=(s-1)m_2+1}^{sm_2-1}Q(v^{k};x)\nonumber+\frac{\alpha_{sm_2}}{2\gamma_{sm_2}\Gamma_{s+1}}\left[V_{sm_2}(x)-V_{(s+1)m_2}(x)\right]\nonumber\\
&&+\sum_{k=sm_2}^{(s+1)m_2-1}\frac{\alpha_{sm2}}{\Gamma_{s+1}\gamma_{sm_2}}\bar e_k(x)\nonumber\\
&=&
\frac{1}{\Gamma_{s}}Q(v^{sm_2};x)+\frac{\alpha_{(s-1)m_2}+\beta_{(s-1)m_2}}{\Gamma_{s}}\sum_{k=(s-1)m_2+1}^{sm_2-1}Q(v^{k};x)\nonumber\\
&&+\frac{\alpha_{sm_2}}{2\gamma_{sm_2}\Gamma_{s+1}}\left[V_{sm_2}(x)-V_{(s+1)m_2}(x)\right]+\sum_{k=sm_2}^{(s+1)m_2-1}\frac{\alpha_{sm2}}{\Gamma_{s+1}\gamma_{sm_2}}\bar e_k(x),\label{eq-non-4.5}
\end{eqnarray}
where the equality follows by enforcing the next condition:
\begin{eqnarray}
\frac{\beta_{sm_2}}{\Gamma_{s+1}}= \frac{\alpha_{(s-1)m_2}+\beta_{(s-1)m_2}}{\Gamma_{s}}.\label{const-4}
\end{eqnarray}
}

{
Now, assume the next two conditions to hold for {$s=1,...,S$}:
\begin{eqnarray}
B_s:=\frac{\alpha_{sm_2}}{2\gamma_{sm_2}\Gamma_{s+1}},\quad B_{s-1}\le B_{s},\label{const-5}
\end{eqnarray}
\begin{eqnarray}
\alpha_{(s-1)m_2}+\beta_{(s-1)m_2}=1.\label{const-6}
\end{eqnarray}
Then we can obtain the next inequalities by summing up \eqref{eq-non-4.5} for $s=1,...,S-1$, {while simplifying the notation as $\sum\limits_{s,k}:=\sum\limits_{s=0}^{S-1}\sum\limits_{k=sm_2}^{(s+1)m_2-1}$ and $\sum\limits_{s=1,k}:=\sum\limits_{s=1}^{S-1}\sum\limits_{k=sm_2}^{(s+1)m_2-1}$}:
{
\begin{eqnarray}
    &&\frac{1}{\Gamma_{S}}Q(v^{Sm_2};x)+\frac{\alpha_{(S-1)m_2}+\beta_{(S-1)m_2}}{\Gamma_{S}}\sum_{k=(S-1)m_2+1}^{Sm_2-1}Q(v^{k};x)\label{new-ineq-1}\\
&\le& 
\frac{1}{\Gamma_{1}}Q(v^{m_2};x)+\frac{\alpha_{0}+\beta_{0}}{\Gamma_{1}}\sum_{k=1}^{m_2-1}Q(v^{k};x)+\sum_{s=1}^{S-1}B_s\left[V_{sm_2}(x)-V_{(s+1)m_2}(x)\right]+\sum\limits_{s=1,k}2B_s\bar e_k(x).\nonumber
\end{eqnarray}
We can lower bound the LHS of~\eqref{new-ineq-1} from the condition~\eqref{const-6}:
\begin{eqnarray*}
    &&\frac{1}{\Gamma_{S}}Q(v^{Sm_2};x)+\frac{\alpha_{(S-1)m_2}+\beta_{(S-1)m_2}}{\Gamma_{S}}\sum_{k=(S-1)m_2+1}^{Sm_2-1}Q(v^{k};x)\\
    &=& \frac{\alpha_{(S-1)m_2}+\beta_{(S-1)m_2}}{\Gamma_{S}}\sum_{k=(S-1)m_2+1}^{Sm_2}Q(v^{k};x)\\
    &\ge& \frac{m_2\left(\alpha_{(S-1)m_2}+\beta_{(S-1)m_2}\right)}{\Gamma_{S}}Q(\bar{w}^{Sm_2};x),\nonumber
\end{eqnarray*}
where in the inequality we again apply the relation $\sum_{k=(s-1)m_2+1}^{sm_2}Q(v^{k};x)\ge m_2 Q(\bar{w}^{sm_2};x)$. On the other hand, the RHS of~\eqref{new-ineq-1} can be upper bounded by applying~\eqref{eq-non-25} with $s=0$:
\begin{eqnarray*}
    &&\frac{1}{\Gamma_{1}}Q(v^{m_2};x)+\frac{\alpha_{0}+\beta_{0}}{\Gamma_{1}}\sum_{k=1}^{m_2-1}Q(v^{k};x)+\sum_{s=1}^{S-1}B_s\left[V_{sm_2}(x)-V_{(s+1)m_2}(x)\right]+\sum\limits_{s=1,k}2B_s\bar e_k(x)\nonumber\\
    &\le&\frac{(1-\alpha_{0}-\beta_{0} )}{\Gamma_{1}}Q(v^{0};x)+\frac{\beta_0 m_2}{\Gamma_{1}}Q(\bar{w}^{0};x)+\sum_{s=0}^{S-1}B_s\left[V_{sm_2}(x)-V_{(s+1)m_2}(x)\right]+\sum\limits_{s,k}2B_s\bar e_k(x)\nonumber \\
&\le&\frac{(1-\alpha_{0}+(m_2-1)\beta_{0} )}{\Gamma_{1}}Q(w^{0};x)+B_0V_0(x)+\sum_{s=1}^{S-1}(B_s-B_{s-1})V_{sm_2}(x)+\sum\limits_{s,k}2B_s\bar e_k(x),\nonumber
\end{eqnarray*}
where the second inequality is due to combining the first two terms and re-grouping the summation of $V_{sm_2}(x)$. In particular, the nonpositive term $-B_{S-1}V_{Sm_2}(x)$ is removed to create an upper bound.

Combining the above three inequalities, we obtain:
\begin{eqnarray}
    &&\frac{m_2\left(\alpha_{(S-1)m_2}+\beta_{(S-1)m_2}\right)}{\Gamma_{S}}Q(\bar{w}^{Sm_2};x)\nonumber\\
    &\le& \frac{(1-\alpha_{0}+(m_2-1)\beta_{0} )}{\Gamma_{1}}Q(w^{0};x)+B_0V_0(x)+\sum_{s=1}^{S-1}(B_s-B_{s-1})V_{sm_2}(x)+\sum\limits_{s,k}2B_s\bar e_k(x).\nonumber
\end{eqnarray}
Taking maximum over $x\in\ZZ$ on both sides, the next inequality follows:
\begin{eqnarray}
    &&\frac{m_2\beta_{(S-1)m_2}}{\Gamma_{S}}\max\limits_{x\in\ZZ}Q(\bar{w}^{Sm_2};x)\le\frac{m_2\left(\alpha_{(S-1)m_2}+\beta_{(S-1)m_2}\right)}{\Gamma_{S}}\max\limits_{x\in\ZZ}Q(\bar{w}^{Sm_2};x)\nonumber\\
    &\le& \frac{(1-\alpha_{0}+(m_2-1)\beta_{0} )}{\Gamma_{1}}\max\limits_{x\in\ZZ}Q(w^{0};x)+2B_0\Omega_\ZZ^2+2\sum_{s=1}^{S-1}(B_s-B_{s-1})\Omega_\ZZ^2\nonumber+\max\limits_{x\in\ZZ}\left\{\sum\limits_{s,k}2B_s\bar e_k(x)\right\},\nonumber
\end{eqnarray}
where in the first inequality we use the fact that both the parameter $\alpha_{(S-1)m_2}$ and the the term $\max\limits_{x\in\ZZ}Q(\bar{w}^{Sm_2};x)$ are nonnegative, and in the second inequality we apply Assumption~\ref{ass:monotone-const-bd} ($V(x)\le 2\Omega_\ZZ^2$) and condition~\eqref{const-5}. Note that the middle two terms can be further combined: 
\begin{eqnarray*}
    2B_0\Omega_\ZZ^2+2\sum_{s=1}^{S-1}(B_s-B_{s-1})\Omega_\ZZ^2=2B_{S-1}\Omega_\ZZ^2=\frac{\alpha_{(S-1)m_2}}{\gamma_{(S-1)m_2}\Gamma_{S}} \Omega_{\ZZ}^2.
\end{eqnarray*}

Finally, we rearrange the coefficients and summarize the above results together with the required conditions on the parameters $\eqref{const-3},\eqref{const-4},\eqref{const-5}$ in the next lemma:
\begin{lemma}\label{theo-non}
{Consider problem~\eqref{finite-sum-HVI-summarize} with Assumption~\ref{ass:mon-and-Lip} and~\ref{ass:monotone-const-bd}. For the iterates generated by \eqref{savrep-m-Update},} suppose the following conditions hold for $k\ge0$ and $s=1,...,S$:
\begin{eqnarray}
\left\{
\begin{array}{lcl}
    1-p_1-\alpha_k\gamma_k L_g-\frac{2\alpha_k \gamma_k L_g}{\beta_k}&\ge&0 \\
    1-\alpha_k-\beta_k&=&0,
\end{array}
\right.\quad
\left\{
\begin{array}{lcl}
    \frac{\alpha_{(s-1)m_2}}{\gamma_{(s-1)m_2}\Gamma_s}&\le& \frac{\alpha_{sm_2}}{\gamma_{sm_2}\Gamma_{s+1}}\\
    \frac{\beta_{sm_2}}{1-\alpha_{sm_2}}&=& \alpha_{(s-1)m_2}+\beta_{(s-1)m_2} 
\end{array}
\right.
\label{paramter-conditions}
\end{eqnarray}
where $\alpha_k$, $\beta_k$, $\gamma_k$ are constants within each interval of updating $\bar{w}$, i.e.\, $\alpha_{sm_2}=\alpha_{sm_2+1}=\cdots=\alpha_{(s+1)m_2-1}$,  $\beta_{sm_2}=\beta_{sm_2+1}=\cdots=\beta_{(s+1)m_2-1}$, and $\gamma_{sm_2}=\gamma_{sm_2+1}=\cdots=\gamma_{(s+1)m_2-1}$. Then,
\begin{eqnarray}
    \max\limits_{x\in\ZZ}Q(\bar{w}^{Sm_2};x)&\le&\frac{1}{m_2\beta_{(S-1)m_2}} \frac{(1-\alpha_{0}+(m_2-1)\beta_{0} )\Gamma_S}{\Gamma_{1}}\max\limits_{x\in\ZZ}Q(w^{0};x)\nonumber\\
    &&+\frac{\alpha_{(S-1)m_2}}{m_2\gamma_{(S-1)m_2}\beta_{(S-1)m_2}} \Omega_\ZZ^2+\frac{\Gamma_S}{m_2\beta_{(S-1)m_2}}\max\limits_{x\in\ZZ}\left\{\sum\limits_{s,k}\frac{\alpha_{sm_2}}{\Gamma_{s+1}\gamma_{sm_2}}\bar e_k(x)\right\}.\label{Q-converge-err}
\end{eqnarray}
\end{lemma}
}

{
In the next step, we shall take total expectation on both sides in~\eqref{Q-converge-err}, which eventually leads to the convergence of the expected dual gap function $\EE\left[\max\limits_{x\in\ZZ}Q(\bar w^{Sm_2};x)\right]$. To establish a meaningful bound, it is critical that we derive an upper bound for the stochastic error term $\max\limits_{x\in\ZZ}\left\{\sum\limits_{s,k}\frac{\alpha_{sm_2}}{\Gamma_{s+1}\gamma_{sm_2}}\bar e_k(x)\right\}$ in expectation, where the simplified notation for the summation is defined as $\sum\limits_{s,k}:=\sum\limits_{s=0}^{S-1}\sum\limits_{k=sm_2}^{(s+1)m_2-1}$. Such bound is given in the next lemma.
}

{
\begin{lemma}\label{lem:error-upp-bd}
{Consider problem~\eqref{finite-sum-HVI-summarize} with Assumption~\ref{ass:mon-and-Lip} and~\ref{ass:monotone-const-bd}. For the iterates generated by \eqref{savrep-m-Update}, suppose the conditions~\eqref{paramter-conditions} hold for $k\ge0$ and $s=1,...,S$, and the following conditions hold for all $s=1,...,S$:}
\begin{eqnarray}
    3\gamma_{sm_2}^2L_h^2-\frac{p_1}{2}\le 0,\quad \frac{2\alpha_{(S-1)m_2}L_g}{\beta_{(S-1)m_2}}\cdot S\ge \frac{2\alpha^2_{sm_2}L_g}{\Gamma_{s+1}\beta_{sm_2}}
    .\label{para-condition-2}
\end{eqnarray}
Then, we have
    \begin{eqnarray}
        \EE\left[\max\limits_{x\in\ZZ}\left\{\sum\limits_{s=0}^{S-1}\sum_{k=sm_2}^{(s+1)m_2-1}\frac{\alpha_{sm_2}}{\Gamma_{s+1}\gamma_{sm_2}}\bar e_k(x)\right\}\right]\le \frac{1}{2}\left(S_2+S_3+S_4\right)\Omega_{\ZZ}^2
    \end{eqnarray}
    where
    \[
    S_2=S_4=\frac{4\alpha_{(S-1)m_2}}{\Gamma_{S}\gamma_{(S-1)m_2}},\quad S_3 = \frac{2\alpha_{(S-1)m_2}L_g}{\beta_{(S-1)m_2}}\cdot S.
    \]
\end{lemma}
\begin{proof}
    See Appendix~\ref{proof:error-upp-bd}.
\end{proof}

{
In view of Lemma~\ref{lem:error-upp-bd},
the convergence of the expected gap function, derived from taking total expectation in~\eqref{Q-converge-err}, is established in the next theorem.
}

\begin{theorem}
\label{thm:conv-exp-gap}
    Consider the problem~\eqref{finite-sum-HVI-summarize} with Assumption~\ref{ass:mon-and-Lip} and~\ref{ass:monotone-const-bd}.
    Suppose the conditions in~\eqref{paramter-conditions} and~\eqref{para-condition-2} hold for SAVREP-m~\eqref{savrep-m-Update} for $k\ge0$ and $s=1,...,S-1$. Then,
    \begin{eqnarray}
        \EE\left[\max\limits_{x\in\ZZ}\,Q(\bar{w}^{Sm_2};x)\right]&\le&\frac{1}{m_2\beta_{(S-1)m_2}} \frac{(1-\alpha_{0}+(m_2-1)\beta_{0} )\Gamma_S}{\Gamma_{1}}\max\limits_{x\in\ZZ}\,Q(w^{0};x)\nonumber \\
        &&+\left(\frac{\alpha_{(S-1)m_2}}{m_2\gamma_{(S-1)m_2}\beta_{(S-1)m_2}}+\frac{\Gamma_S}{2m_2\beta_{(S-1)m_2}}\cdot (S_2+S_3+S_4)\right)\Omega_\ZZ^2.\nonumber\label{expected-gap-converge}
    \end{eqnarray}
\end{theorem}

}
{We shall specify a set of parameters that satisfy the conditions in \eqref{paramter-conditions} and~\eqref{para-condition-2} and give the corresponding gradient complexities in the next corollary.
\begin{corollary} \label{cor-parameter-complexity}
{In view of Theorem~\ref{thm:conv-exp-gap},} if we choose 
\begin{eqnarray}
&&
p_1=\frac{1}{m_1}\le \frac{1}{2},\quad \alpha_k=\frac{2}{s+4},\quad\beta_k=\frac{s+2}{s+4},\quad \gamma_k=\frac{s+3}{24(L_g+(s+1)L_h\sqrt{m_1}) 
},\nonumber
\end{eqnarray}
where $s=\left\lfloor\frac{k}{m_2}\right\rfloor$, then when $m_2|k$, 
\begin{eqnarray}
\mathbb{E}\left[\max\limits_{x\in\ZZ}\,Q(\bar{w}^{k},x)\right]&\le& \frac{6}
{S^2}\max\limits_{x\in\XX}\,Q(w^0,x)+\frac{228}{m_2S^2}L_g\Omega_\ZZ^2+\frac{216}{m_2S}L_h\sqrt{m_1}\Omega_\ZZ^2
\nonumber\\
    &=&\frac{6m_2^2}{k^2}Q(w^0,x)+\frac{228m_2}{k^2}L_g\Omega_\ZZ^2+\frac{216}{k}L_h\sqrt{m_1}\Omega_\ZZ^2
    \label{monotone-convergence-rate}
\end{eqnarray}
where $S=k/m_2$. The expected gradient complexity for reducing $\mathbb{E}\left[\max\limits_{x\in\ZZ}Q(\bar{w}^{k},x)\right]$ to some $\epsilon>0$ is given by
\begin{eqnarray}
\mathcal{O}
\left(\sqrt{\frac{Q(w^0,x)}{\epsilon}}m_2+\sqrt{\frac{L_gm_2}{\epsilon}}\Omega_\ZZ+\frac{L_h\sqrt{m_1}\Omega_{\ZZ}^2}{\epsilon}+m_1
\right).\label{monotone-grad-complexity}
\end{eqnarray}
\end{corollary}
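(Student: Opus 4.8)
The plan is to verify that the parameters chosen in Corollary \ref{cor-parameter-complexity} satisfy all five conditions in \eqref{paramter-conditions}, substitute them into the bound of Theorem \ref{theo-non}, and simplify.

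The first and most useful step is to obtain $\Gamma_s$ in closed form. Since $\alpha_{jm_2}=\frac{2}{j+4}$, the defining recursion telescopes: $\Gamma_s=\prod_{j=0}^{s-1}\left(1-\frac{2}{j+4}\right)=\prod_{j=0}^{s-1}\frac{j+2}{j+4}=\frac{6}{(s+2)(s+3)}$; in particular $\Gamma_0=1$ and $\Gamma_1=\tfrac12$. Armed with this, each of the five conditions reduces to an elementary inequality valid for all $s\ge0$: (i) $p_1-2\gamma_k^2L_h^2\ge0$ holds because $\gamma_k\le\frac{s+3}{24(s+1)L_h\sqrt{m_1}}\le\frac{1}{8L_h\sqrt{m_1}}$ while $p_1=\tfrac1{m_1}$; (ii) with $\beta_k=\tfrac12$ the second condition reads $\tfrac34-p_1-3\alpha_k\gamma_kL_g\ge0$, which follows from $p_1\le\tfrac12$ and $\alpha_k\gamma_kL_g\le\frac{2}{s+4}\cdot\frac{s+3}{24}\le\frac1{12}$; (iii) $1-\alpha_k-\beta_k=\tfrac12-\frac{2}{s+4}\ge0$; (iv) a direct computation gives $\frac{\alpha_{sm_2}}{\gamma_{sm_2}\Gamma_{s+1}}=\frac13\!\left(24L_g+24(s+1)L_h\sqrt{m_1}+(s+1)^{3/2}\sqrt{\Delta m_2}/\Omega_\ZZ\right)$, which is nondecreasing in $s$; and (v) reduces to $\frac{s+4}{2(s+2)}\le\frac{s+7}{2(s+3)}$, i.e. $0\le2s+2$. (The restrictions $p_1\le\tfrac12$ and $\alpha_k\le\tfrac12$ amount to $m_1\ge2$ and $s\ge0$ respectively.)

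Next I would substitute the parameters into the conclusion of Theorem \ref{theo-non}. Using $\beta_{(S-1)m_2}=\tfrac12$, $\alpha_0=\beta_0=\tfrac12$ (so $1-\alpha_0+(m_2-1)\beta_0=\tfrac{m_2}{2}$), $\Gamma_1=\tfrac12$ and $\Gamma_S=\frac{6}{(S+2)(S+3)}$, the ``initialization'' term collapses to $\frac{12}{(S+2)(S+3)}Q(w^0;x^*)\le\frac{24}{S^2}Q(w^0;x^*)$, while the ``diameter'' term becomes $\frac{2\big(24L_g\Omega_\ZZ^2+24SL_h\sqrt{m_1}\Omega_\ZZ^2+S^{3/2}\sqrt{\Delta m_2}\,\Omega_\ZZ\big)}{m_2(S+2)(S+3)}$, which after bounding $(S+2)(S+3)\ge S^2$ termwise is at most $\frac{48}{m_2S^2}L_g\Omega_\ZZ^2+\frac{48}{m_2S}L_h\sqrt{m_1}\Omega_\ZZ^2+\frac{2\Omega_\ZZ\sqrt\Delta}{\sqrt{Sm_2}}$.

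The main obstacle, and the only step requiring real care, is controlling the error term $\frac{\Gamma_S}{\beta_{(S-1)m_2}}\sum_{s=0}^{S-1}\frac{\alpha_{sm_2}\gamma_{sm_2}}{\Gamma_{s+1}}\Delta=\frac{12}{(S+2)(S+3)}\,\Delta\sum_{s=0}^{S-1}\frac{\alpha_{sm_2}\gamma_{sm_2}}{\Gamma_{s+1}}$. Discarding the nonnegative $L_g$ and $L_h$ contributions in the denominator of $\gamma_{sm_2}$ gives $\gamma_{sm_2}\le\frac{(s+3)\Omega_\ZZ}{(s+1)^{3/2}\sqrt{\Delta m_2}}$, hence, using $s+3\le3(s+1)$, $\frac{\alpha_{sm_2}\gamma_{sm_2}}{\Gamma_{s+1}}=\frac{\alpha_{sm_2}\gamma_{sm_2}(s+3)(s+4)}{6}\le\frac{3(s+1)^{1/2}\Omega_\ZZ}{\sqrt{\Delta m_2}}$. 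Summing via $\sum_{s=0}^{S-1}(s+1)^{1/2}\le\tfrac23(S+1)^{3/2}$ and then using $\frac{(S+1)^{3/2}}{(S+2)(S+3)}\le\frac1{\sqrt S}$ together with $\Delta/\sqrt{\Delta m_2}=\sqrt{\Delta/m_2}$ yields an error bound of at most $\frac{24\Omega_\ZZ\sqrt\Delta}{\sqrt{Sm_2}}$; added to the $\frac{2\Omega_\ZZ\sqrt\Delta}{\sqrt{Sm_2}}$ from the diameter term this gives exactly $\frac{26\Omega_\ZZ\sqrt\Delta}{\sqrt{Sm_2}}$. Combining the three contributions proves \eqref{monotone-convergence-rate}, and $S=k/m_2$ gives the second line. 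Finally, for the gradient complexity \eqref{monotone-grad-complexity} I would demand each of the four terms in \eqref{monotone-convergence-rate} be at most $\epsilon$, solve for $k$ (respectively $S$) in each, and take the maximum, which is within a constant factor of the displayed sum; rounding $k$ up to a multiple of $m_2$ adds $O(m_2)$, absorbed by the first term, and since the full operators $\nabla g(\bar w^k)$ and $H(w^k)$ are (re)evaluated only once every $m_2$ and, in expectation, every $m_1$ iterations respectively, the amortized per-iteration gradient cost is $O(1)$, so the iteration count coincides with the gradient complexity up to constants.
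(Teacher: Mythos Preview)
Your proposal is correct and follows essentially the same approach as the paper's own proof: compute $\Gamma_s=\frac{6}{(s+2)(s+3)}$ in closed form, verify the five conditions of \eqref{paramter-conditions} one by one, and then substitute into the bound of Theorem~\ref{theo-non}, estimating the initialization, diameter, and accumulated error terms exactly as you describe (including the key step $\frac{\alpha_{sm_2}\gamma_{sm_2}}{\Gamma_{s+1}}\le 3\sqrt{s+1}\,\Omega_\ZZ/\sqrt{\Delta m_2}$ and the integral bound on $\sum\sqrt{s+1}$). Your expression $\frac{\alpha_{sm_2}}{\gamma_{sm_2}\Gamma_{s+1}}=\tfrac13\bigl(24L_g+24(s+1)L_h\sqrt{m_1}+(s+1)^{3/2}\sqrt{\Delta m_2}/\Omega_\ZZ\bigr)$ is in fact the correct simplification, and your short final paragraph on deducing the gradient complexity (bounding each term by $\epsilon$, rounding $k$ to a multiple of $m_2$, amortized $O(1)$ gradient cost) is a welcome addition that the paper leaves implicit.
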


\begin{proof}
We first verify the conditions \eqref{paramter-conditions} and~\eqref{para-condition-2} are satisfied by the specific choices of the parameters. Note that $\Gamma_s=\frac{6}{(s+2)(s+3)}$, and the following inequalities hold:
\begin{eqnarray}
3\gamma_k^{2}L_h^2\leq 3\left(\frac{s+3}{24(s+1)\sqrt{m_1}}\right)^2{\le \frac{1}{2m_1}=\frac{p_1}{2}} ,\nonumber
\end{eqnarray}
\begin{eqnarray}
p_1+\alpha_k\gamma_k L_g+\frac{2\alpha_k \gamma_k L_g}{\beta_k}\le p_1+5\alpha_k\gamma_k L_g\le \frac{1}{2}+\frac{10}{s+4}\cdot\frac{s+3}{24}\le 1,\nonumber
\end{eqnarray}
\begin{eqnarray}
\frac{\alpha_{sm_2}}{\gamma_{sm_2}\Gamma_{s+1}}={8}(L_g+(s+1)L_h\sqrt{m_1}),\nonumber
\end{eqnarray}
which is non-decreasing in $s=0,1,...,S-1$, and

\begin{eqnarray}
\frac{\beta_{sm_2}}{1-\alpha_{sm_2}}=1= \alpha_{(s-1)m_2}+\beta_{(s-1)m_2}.\nonumber
\end{eqnarray}
Finally,
\begin{eqnarray}
    &&S_3 = \frac{2\alpha_{(S-1)m_2}L_g}{\beta_{(S-1)m_2}}\cdot S=4L_g\cdot\frac{S}{S+1}\ge4L_g\cdot\frac{s+1}{s+2},\,\,s=0,...,S-1. \nonumber
\end{eqnarray}
Furthermore,
\begin{eqnarray}
    \frac{2\alpha^2_{sm_2}L_g}{\Gamma_{s+1}\beta_{sm_2}}= \frac{2L_g\cdot \frac{4}{(s+4)^2}}{\frac{6}{(s+3)(s+4)}\cdot\frac{s+2}{s+4}}=\frac{4L_g}{3}\cdot\frac{(s+3)(s+4)^2}{(s+2)(s+4)^2}=\frac{4L_g}{3}\cdot\frac{s+3}{s+2}\le 4L_g\cdot\frac{s+1}{s+2}. \nonumber
\end{eqnarray}
Therefore, the conditions in \eqref{paramter-conditions} and~\eqref{para-condition-2} are indeed satisfied. 

The convergence rate \eqref{monotone-convergence-rate} can be derived by noticing the next inequalities:

\begin{eqnarray}
\frac{1}{m_2\beta_{(S-1)m_2}} \frac{(1-\alpha_{0}+(m_2-1)\beta_{0} )\Gamma_S}{\Gamma_{1}}= \frac{s+3}{s+1}\Gamma_{S}\le \frac{6}{S^2},\nonumber
\end{eqnarray}
\begin{eqnarray}
\frac{\alpha_{(S-1)m_2}}{m_2\gamma_{(S-1)m_2}\beta_{(S-1)m_2}}\le\frac{48}{m_2S^2}L_g+\frac{48}{m_2S}L_h\sqrt{m_1}
,\nonumber
\end{eqnarray}
\begin{eqnarray}
\frac{\Gamma_S(S_2+S_4)}{2m_2\beta_{(S-1)m_2}}=\frac{4\alpha_{(S-1)m_2}}{m_2\gamma_{(S-1)m_2}\beta_{(S-1)m_2}}= \frac{192}{(S+1)m_2}\cdot\left(\frac{L_g}{S+2}+\frac{SL_h\sqrt{m_1}}{S+2}\right)\le \frac{192L_g}{m_2S^2}+\frac{192L_h\sqrt{m_1}}{m_2S},\nonumber
\end{eqnarray}
\begin{eqnarray}
    \frac{\Gamma_S}{2m_2\beta_{(S-1)m_2}}\cdot S_3=\frac{\Gamma_S\alpha_{(S-1)m_2}L_gS}{m_2\beta^2_{(S-1)m_2}}=\frac{12SL_g}{m_2(S+1)^2(S+2)}\le \frac{12L_g}{m_2S^2},\nonumber
\end{eqnarray}
where the definition of $S_2,S_3,S_3$ can be referred to Lemma~\ref{lem:error-upp-bd}. Therefore, we have 
\begin{eqnarray}
    \mathbb{E}\left[\max\limits_{x\in\ZZ}\,Q(\bar{w}^{k},x)\right]&\le& \frac{6}{S^2}Q(w^0,x)+\frac{252}{m_2S^2}L_g\Omega_\ZZ^2+\frac{240}{m_2S}L_h\sqrt{m_1}\Omega_\ZZ^2 . \nonumber
\end{eqnarray}
Substituting $S=k/m_2$, we get
\[
\mathbb{E}\left[\max\limits_{x\in\ZZ}\,Q(\bar{w}^{k},x)\right]\le \frac{6m_2^2}{k^2}Q(w^0,x)+\frac{252m_2}{k^2}L_g\Omega_\ZZ^2+\frac{240}{k}L_h\sqrt{m_1}\Omega_\ZZ^2.\nonumber
\]

\end{proof}

}

\begin{remark}
{\rm     
{In the case when $m_1=0$, the gradient complexity~\eqref{monotone-grad-complexity} matches the gradient complexity of $\mbox{Katyusha}^{\scriptsize \mbox{ns}}$ for finite-sum optimization~\cite{allen2017katyusha} for convex objective function.} In the case when $m_2=0$, our problem will become similar to the HVI problem considered in~\cite{alacaoglu2022stochastic} where only the vector mapping $H(\cdot)$ consists of finite-sum, and the gradient complexity~\eqref{monotone-grad-complexity} matches the results in~\cite{alacaoglu2022stochastic}. The key structure $m_1\gg 1$ and $m_2\gg 1$ in our HVI problem is, however, what differentiates this work from the previously established results, and it requires applying algorithms such as the proposed ones to guarantee the improved gradient complexity, {which is mainly reflected in the combined term involving both $m_2$ and $\frac{L_g}{\epsilon}$.}
}
\end{remark}

{
\begin{remark}
\rm
    The boundedness assumption on the constraint set in Assumption~\ref{ass:monotone-const-bd} can be relaxed by adopting the {\it restricted dual gap function} in the analysis. The interested readers can refer to~\cite{nesterov2007dual} for more detailed discussion. In this work we adopt Assumption~\ref{ass:monotone-const-bd} instead to keep the analysis simple.
\end{remark}
}

{
\begin{remark}
\label{remark:assumptions}
\rm
    We shall point out that the separate treatment of the finite-sum function $g(\cdot)$ to gain acceleration in terms of constants such as $m_2$ and $L_g$ requires slightly stronger assumptions compared to~\cite{alacaoglu2022stochastic} that equivalently treats $\nabla g(\cdot)$ simply as part of the VI mapping. Indeed, in view of Assumption~\ref{ass:mon-and-Lip}, we only assume the whole finite-sum VI mapping $H(\cdot)$ to be monotone, but need to assume {\it each} function $g_i(\cdot)$ to be convex. While we assume each mapping $H_i(\cdot)$ and $\nabla g_i(\cdot)$ are Lipschitz continuous, such assumption for $H_i(\cdot)$ can in fact be relaxed to become a ``mean-squared smoothness'' (cf.\,Assumption 1-(iv) in~\cite{alacaoglu2022stochastic}) in our analysis (see e.g.\,~\eqref{tower-lipschitz} and~\eqref{tower-lipschitz-2}). In short, the assumptions on convexity and smoothness are slightly more restrictive (in the sense that the assumptions are imposed for each component function, see e.g.\,~\cite{allen2017katyusha,lan2019unified}) to obtain accelerated complexities for finite-sum optimization than for finite-sum VI problem (where similar assumptions do not necessarily hold for each mapping). To gain improved complexities for both parts in finite-sum HVI problem as considered in this paper, we have to make the aforementioned more restrictive assumptions on the corresponding optimization part.
\end{remark}
}

\section{Variance Reduced Scheme for Finite-Sum HVI: Strongly Monotone \texorpdfstring{$H(x)$}{H(x)} and Convex \texorpdfstring{$g(x)$}{g(x)}}
\label{sec:strong-monotone}

We consider the finite-sum mapping $H(\cdot)$ being strongly monotone with modulus $\mu_h>0$ and each function $g_i(\cdot)$ being convex in this section. Same as in 
the previous section, we define $H(\cdot)=\sum\limits_{i=1}^{m_1}H_i(\cdot)$ where each $H_i(\cdot)$ is Lipschitz continuous with constant $L_{h(i)}$, and $ g(x)=\sum\limits_{i=1}^{m_2} g_i(x)$ is sum of convex functions, each with gradient Lipschitz constant $L_{g(i)}$. {As an alternative (but equivalent) setting, we may consider the problem where $H(\cdot)$ is merely monotone but at least one $g_i(\cdot)$ is strongly convex (in addition to each function being convex). Then solving the HVI problem with $H(x)$ replaced by $H(x)+\mu x$ and $g(x)$ replaced by $g(x)-\frac{\mu}{2}\|x\|^2$ (which is an equivalent formulation in view of~\eqref{HVI-prob-2}) allows us to apply the algorithm in this section with the original assumptions where $H(\cdot)$ being strongly monotone instead.}
        
Consider the following update for iteration number $k$ {with non-negative parameters $\alpha,\beta,\gamma$ and $p_1,p_2\in[0,1]$}:
\begin{equation}
\left\{
\begin{array}{lcl}
     \Bar{x}^k&=& (1-p_1)x^k+p_1w^k\\
     {y^k} &=& (1-\alpha-\beta)v^k+\alpha x^k+\beta\bar w^k\\
     x^{k+0.5}&=&\arg\min\limits_{x\in\mathcal{Z}}\,\,\gamma\langle {H}(w^k)+{\tilde\nabla {g}(y^k)},x-\Bar{x}^k\rangle+\frac{1}{2}\|x-\Bar{x}^k\|^2\\
     x^{k+1} &=& \arg\min\limits_{x\in\mathcal{Z}}\,\,\gamma\langle \hat {H}(x^{k+0.5})+{\tilde\nabla {g}(y^k)},x-\Bar{x}^k\rangle+\frac{1}{2}\|x-\Bar{x}^k\|^2\\
     { v^{k+1}} &=& {(1-\alpha-\beta) v^{k}+\alpha x^{k+0.5}+\beta\bar w^k}\\
     w^{k+1}&=&\left\{
     \begin{array}{ll}
          x^{k+1},& \mbox{with prob. $p_1$} \\
          w^k,    & \mbox{with prob. $1-p_1$}
    \end{array}\right.\\
     \bar w^{k+1}&=&\left\{
     \begin{array}{ll}
          v^{k+1},  & \mbox{with prob. $p_2$} \\
          \bar w^k, & \mbox{with prob. $1-p_2$}.
       
     \end{array}
     \right.
\end{array}
\right.\label{VR-VI-Opt-Strong-Update}
\end{equation}

There are two main differences between the update \eqref{VR-VI-Opt-Strong-Update} presented above and the update \eqref{savrep-m-Update} in the previous section. First, while \eqref{savrep-m-Update} has a double-loop structure, which updates $\bar w^{k}$ once every $m_2$ iterations, \eqref{VR-VI-Opt-Strong-Update} simply updates $\bar w^k$ with probability $p_2$ in each iteration. {This single loop structure for strongly monotone is largely inspired by the work in~\cite{kovalev2020don}, where similar loopless variant of Katyusha is proposed.}
Second, instead of using parameters $\alpha_k,\beta_k,\gamma_k$ that depend on iteration number $k$ as in~\eqref{savrep-m-Update}, the update in \eqref{VR-VI-Opt-Strong-Update} uses constant parameters: $\alpha_k=\alpha$, $\beta_k=\beta$, and $\gamma_k=\gamma$ for all $k$. 
We shall refer to the update~\eqref{VR-VI-Opt-Strong-Update} as ``SAVREP''.

\subsection{Gradient complexity analysis}
\label{sec:grad-comp-strong}

{
Similar to the first step in the analysis in Section~\ref{sec:grad-comp-monotone}, we first establish one-iteration relation for the vector mapping $H(\cdot)$, followed by one-iteration relation for function $g(\cdot)$, and finally combine the two results in the decrease in a potential function. The key difference is that the (expected) potential function used in the analysis is an upper bound of the expected distance to the optimal solution $x^*$. Therefore, we will be able to establish the gradient complexity for obtaining an $\epsilon$-solution in expectation: $\EE\left[\|x^k-x^*\|^2\right]\le\epsilon$, instead of considering the expected dual gap function as done for the monotone case. We first summarize the assumptions used in this Section below.
}

{
\begin{assumption}
\label{ass:strong-mon-and-Lip}
    For problem~\eqref{finite-sum-HVI-summarize}, we assume the following: (1) $H(\cdot)$ is strongly monotone with modulus $\mu$ and each $H_i(\cdot)$ is Lipschitz continuous with constant $L_{h(i)}$, and we define $L_h:=\sum\limits_{i=1}^{m_1}L_{h(i)}$; (2) Each $g_i(\cdot)$ is convex and Lipschitz smooth with constant $L_{g(i)}$, and we define $L_g:=\sum\limits_{i=1}^{m_2}L_{g(i)}$.
\end{assumption}
}

The lemma below summarizes the results from the first part of the analysis.

\begin{lemma}
\label{lem:VI-relation-1}
{Consider problem~\eqref{finite-sum-HVI-summarize} with Assumption~\ref{ass:strong-mon-and-Lip}.} For the iterates generated by \eqref{VR-VI-Opt-Strong-Update}, 
the following inequality holds for any $x\in\ZZ$ and $k=0,1,2,...$
\begin{eqnarray}
&&\EE_{k_1}\left[\gamma\langle H(x)+\tilde\nabla {g}(y^k),x^{k+0.5}-x\rangle\right]\nonumber\\
&\le& \frac{1}{2}\EE_{k_1}\left[(1-p_1-{}\gamma\mu_h)\|x^k-x\|^2+p_1\|w^k-x\|^2-\|x^{k+1}-x\|^2\right]\nonumber\\
&&-\frac{1}{2}{\left(p_1-2\gamma^2L_h^2\right)}\EE_{k_1}\left[\|x^{k+0.5}-w^k\|^2\right]-\frac{1}{2}\left(1-p_1-2\gamma\mu_h\right)\EE_{k_1}\left[\|x^{k+0.5}-x^k\|^2\right],\nonumber
\end{eqnarray}
{where $\EE_{k_1}[\cdot]:=\EE_{\xi_k}[\cdot|x^k,w^k]$ as defined in~\eqref{expectations-1}.}

\begin{proof}
See Appendix \ref{proof:VI-relation-1}.
\end{proof}
\end{lemma}

Now we shall proceed to presenting the results in the second part of the analysis, summarized in the next lemma.
\begin{lemma}
\label{lem:function-relation-1}
{Consider problem~\eqref{finite-sum-HVI-summarize} with Assumption~\ref{ass:strong-mon-and-Lip}.} For the iterates generated by \eqref{VR-VI-Opt-Strong-Update}, if the condition $1-\alpha-\beta\ge0$ holds, the following inequality holds for any $x\in\ZZ$ and $k=0,1,2,...$
\begin{eqnarray}
\EE_{k_2}\left[g(v^{k+1})-g(x)\right]&\le& \EE_{k_2}\left[(1-\alpha-\beta)\left(g(v^k)-g(x)\right)+\beta \left(g(\bar w^k)-g(x)\right)\right]\nonumber\\
&&+\EE_{k_2}\left[\alpha\langle\tilde\nabla g(y^k),x^{k+0.5}-x\rangle\right]+{\left(\frac{\alpha^2L_g}{2}+\frac{\alpha^2L_g}{2\beta}
\right)}\EE_{k_2}\left[\|x^{k+0.5}-x^k\|^2\right], \nonumber
\end{eqnarray}
{where $\EE_{k_2}[\cdot]:=\EE_{\zeta_k}[\cdot|x^k,\bar w^k,v^k]$ as defined in~\eqref{expectations-1}.}

\begin{proof}
See Appendix \ref{proof:function-relation-1}.
\end{proof}
\end{lemma}

The last part of the analysis will combine the results from Lemma \ref{lem:VI-relation-1} and Lemma \ref{lem:function-relation-1} and establish the overall per-iteration convergence in terms of a potential function, which involves the function defined in~\eqref{Q-function}.
In particular, we will use the function $Q(x';x^*)$ with $x'$ being the iterates generated by SAVREP \eqref{VR-VI-Opt-Strong-Update}. Then $Q(x';x^*)$ is nonnegative for any $x'\in\ZZ$ by definition in our HVI problem setting~\eqref{finite-sum-HVI-summarize}. In addition, it is upper-bounded in terms of $x'$:
\begin{eqnarray}
Q(x';x^*)&=& \langle H(x^*),x'-x^*\rangle+g(x')-g(x^*)\le \langle H(x'),x'-x^*\rangle-\mu_h\|x'-x^*\|^2+g(x')-g(x^*)\nonumber\\
&\le& \langle H(x')+\nabla g(x'),x'-x^*\rangle-\mu_h\|x'-x^*\|^2\le \frac{1}{4\mu_h}\left\|H(x')+\nabla g(x')\right\|^2.\nonumber
\end{eqnarray}
Now we are ready to show the per-iteration convergence for \eqref{VR-VI-Opt-Strong-Update}:
\begin{theorem}
\label{thm:per-iter-conv-1}
Consider problem~\eqref{finite-sum-HVI-summarize} with Assumption~\ref{ass:strong-mon-and-Lip}.
For the iterates generated by SAVREP~\eqref{VR-VI-Opt-Strong-Update}, suppose the following conditions on the parameters hold:
\begin{equation}
    \label{const-2}
    \left\{
    \begin{array}{ll}
        p_1-2\gamma^2L_h^2-\frac{4\gamma\mu_h}{3}\ge0,\\
        1-p_1-{\frac{10\gamma\mu_h}{3}}-\alpha\gamma L_g-\frac{\alpha \gamma L_g}{\beta}\ge0,
    \end{array}
    \right.,\quad 1-\alpha-\beta\ge0,
\end{equation}
then the following inequality holds for $k=0,1,2,...$
\begin{eqnarray}
&&\EE\left[(1-\phi p_2)Q(v^{k+1};x^*)+\phi Q(\bar w^{k+1};x^*)\right]+\frac{\alpha}{2\gamma}\EE\left[(1-p_1)\|x^{k+1}-x^*\|^2+\|w^{k+1}-x^*\|^2\right]\nonumber\\
&\le& \EE\left[(1-\alpha-\beta)Q(v^k;x^*)+(\beta+\phi(1-p_2)) Q(\bar w^k;x^*)\right]\nonumber\\
&&+\left(1-\frac{\gamma\mu_h}{3}\right)\frac{\alpha}{2\gamma}\EE\left[\left(1-p_1\right)\|x^k-x^*\|^2+\|w^k-x^*\|^2\right].\label{potential-reduce-sto-error-2}
\end{eqnarray}

\begin{proof}
See Appendix \ref{proof:per-iter-conv-1}.
\end{proof}

\end{theorem}

Theorem~\ref{thm:per-iter-conv-1} establishes the relation for the subsequent iterates generated by~\eqref{VR-VI-Opt-Strong-Update}, and we are left with specifying the parameters $\alpha,\beta,\gamma,\phi,p_1,p_2$ under the condition~\eqref{const-2} in order to give explicit gradient complexity results.
We summarize the gradient complexity results in the next corollary, whose proof is relegated to Appendix~\ref{proof:grad-complexity-1}.
\begin{corollary}
\label{prop:grad-complexity-1}
In view of Theorem \ref{thm:per-iter-conv-1}, by specifying the following parameters:
\begin{eqnarray}
\gamma=\frac{1}{4}\min\left(\frac{\sqrt{p_1}}{L_h},\sqrt\frac{p_2}{L_g\mu_h},\frac{p_1}{\mu_h}\right),\quad \alpha=\frac{1}{12}\min\left(\sqrt{\frac{\mu_h}{L_gp_2}},1\right),\quad \beta=\frac{1}{2},\label{SAVREP-parameter-choice}
\end{eqnarray}
and
\[
\phi = \frac{(1+\alpha)m_2}{2},\quad p_1=\frac{1}{m_1},\quad p_2 = \frac{1}{m_2},
\]
{the expected gradient complexity for obtaining $\EE\left[\|w^k-x^*\|^2\right]\le \epsilon$ is}
\begin{eqnarray}
\mathcal{O}
\left(\left(m_1+m_2+\sqrt{\frac{L_gm_2}{\mu_h}}+\frac{L_h\sqrt{m_1}}{\mu_h}\right)\log\frac{d_0}{\epsilon}\right)\label{grad-complexity-strong},
\end{eqnarray}
where $d_0:=\frac{\gamma}{\alpha\mu_h}\left\|H(x^0)+\nabla g(x^0)\right\|^2+2\|x^0-x^*\|^2.$

\begin{proof}
See Appendix \ref{proof:grad-complexity-1}.
\end{proof}

\end{corollary}

The result in Corollary~\ref{prop:grad-complexity-1} compares to the literature as follows.
In~\cite{alacaoglu2022stochastic}, the authors consider a similar finite-sum HVI problem~\eqref{HVI-problem}, but the function $g(x)$ is assumed to be convex lower-semicontinuous and the finite-sum structure is only present in the vector mapping $H(x)$. As a result, the gradient complexity derived in~\cite{alacaoglu2022stochastic} for strongly monotone $H$ is $\mathcal{O}\left(\left(m_1+\frac{L_h\sqrt{m_1}}{\mu_h}\right)\log\frac{1}{\epsilon}\right)$. However, when $g(x)$ becomes differentiable and gradient Lipschitz as in our setting, the obtained gradient complexity by the proposed variance reduced algorithm SAVREP can significantly improve the dependency on $L_g$ {through the term $\sqrt{\frac{L_g}{\mu}}$ and its combined effect with $\sqrt{m_2}$} {(with slightly more restrictive assumptions; see Remark~\ref{remark:assumptions})}. {See Table~\ref{tab:literature} for a more detailed comparisons.} In fact, the dependency on these parameters matches the optimal gradient complexity established for finite-sum strongly convex optimization $\mathcal{O}\left(\left(m_2+\sqrt{\frac{L_gm_2}{\mu_g}}\right)\log\frac{1}{\epsilon}\right)$~\cite{zhou2019direct, allen2017katyusha, lan2019unified}. On the other hand, while the work~\cite{chen2017accelerated} is the first to propose an accelerated (optimal) algorithm for a similar HVI problem considered in this paper (where $H(\cdot)$ is monotone), they focus on the non-finite-sum stochastic setting and the acceleration manifests mainly in $L_g$. In contrast, the finite-sum structure is the main focus in this work and the proposed algorithm achieves improved dependency on $m_2$ in addition to $L_g$.

\section{Finite-Sum Constrained Finite-Sum Optimization}
\label{sec:finite-sum-constrained}

In this section, we introduce an application for which the proposed SAVREP and SAVREP-m can be applied to. Consider the following problem:
\begin{eqnarray}
\begin{array}{lll}
(P) & \min & \sum_{i=1}^{m_2} g_i(x) \\
& \mbox{s.t.} & \sum_{j=1}^{m_1} h_j(x) \le 0 \\
& & x \in \XX.
\end{array}\label{finite-sum-cons-prob}
\end{eqnarray}
While it is not uncommon to formulate the objective function as finite-sum in machine learning research, the specific finite-sum structure of inequality constraints given in \eqref{finite-sum-cons-prob}  is also found in applications such as empirical risk minimization and Neyman-Pearson classification \cite{tong2016survey}. Previous research \cite{aravkin2019level, lin2018levelconvex, lin2018level} has developed level-set methods for solving \eqref{finite-sum-cons-prob}. In particular, \cite{lin2018level} proposed to reformulate the level-set subproblem into saddle-point problem and solve it with variance-reduced method \cite{palaniappan2016stochastic}. 


In this section, we propose to solve \eqref{finite-sum-cons-prob} through its Lagrangian dual formulation, which is equivalently a saddle point problem with a special structure that is suitable for applying the accelerated variance reduced method SAVREP-m. In our discussion, we assume $g_i(x)$ is {convex} for all $i=1,...,m_2$, $h_j(x)=(h_{j,1}(x),\cdots, h_{j,\ell}(x))^\top \in \RR^{\ell}$ and $h_{j,s}(x)$ is convex in $x$ for all $j=1,...,m_1$ and $s=1,...,\ell$, and $\XX\subseteq \RR^n$ is a closed convex set. The corresponding saddle point reformulation of \eqref{finite-sum-cons-prob} solves the following:
\begin{eqnarray}
\min\limits_{x\in\XX}\max\limits_{y\in\mathbb{R}^\ell_+}\,\,L(x;y) :=\sum_{i=1}^{m_2} g_i(x) + \sum_{j=1}^{m_1} y^\top h_j(x),\label{finite-sum-SPP-reform}
\end{eqnarray}
where $L(x;y)$ defines the Lagrangian function of $(P)$. The partial gradients of the Lagrangian function are given by:
\[
\left\{
\begin{array}{lcl}
\nabla_x L(x;y) &=& \sum_{i=1}^{m_2} \nabla g_i(x) + \sum_{j=1}^{m_1} \left( J h_j(x) \right)^\top  y \\
\nabla_y L(x;y) &=& \sum_{j=1}^{m_1} h_j(x) .
\end{array}
\right.
\]
Denote $\YY:=\RR^\ell_+$, then the stationarity condition for \eqref{finite-sum-SPP-reform} is the following VI problem:
\begin{quote}
Find $(x^*,y^*) \in \XX \times \YY$ such that
\begin{eqnarray}
\label{finite-sum-const-VI-prob}
\left( \begin{array}{c} \nabla_x L(x^*;y^*) \\ - \nabla_y L(x^*;y^*) \end{array}
\right)^\top \left( \begin{array}{c} x-x^* \\ y - y^* \end{array}
\right) \ge 0,\,\, \mbox{ for all } (x,y) \in \XX\times \YY,
\end{eqnarray}
\end{quote}
which can be written in the succinct notation: Find $z^*\in\ZZ$ such that $\langle F(z^*),z-z^*\rangle\ge0$ for all $z\in\ZZ$,
where we let $z:=(x;y)$, $\ZZ:=\XX\times\YY$, and
\begin{eqnarray}
F(z)&:=&  \left( \begin{array}{c} \nabla_x L(x;y) \\ - \nabla_y L(x;y) \end{array}
\right)= \sum\limits_{j=1}^{m_1}\begin{pmatrix}(Jh_j(x))^\top y\\-h_j(x)\end{pmatrix}+\sum_{i=1}^{m_2} \left( \begin{array}{c} \nabla g_i(x) \\ 0 \end{array} \right) =\sum\limits_{j=1}^{m_1}H_j(z)+\sum\limits_{i=1}^{m_2}\nabla g_i(z).\nonumber\\
\label{finite-sum-const-VI-op}
\end{eqnarray}
{We may assume a primal-dual solution $(x^*,y^*)$ exists for problem~\eqref{finite-sum-cons-prob} and its dual, which is also a saddle point solution to~\eqref{finite-sum-SPP-reform}. Therefore, we can transform} the original finite-sum constrained finite-sum optimization problem \eqref{finite-sum-cons-prob} into solving a VI problem with the mapping defined in \eqref{finite-sum-const-VI-op}, and such $F(z)$ takes the form of \eqref{HVI-prob-2}, which consists of a finite-sum general vectore mappings $\sum\limits_{j=1}^{m_1}\begin{pmatrix}(Jh_j(x))^\top y\\-h_j(x)\end{pmatrix}$ and a finite-sum gradient mapping $\sum\limits_{i=1}^{m_2} \left( \begin{array}{c} \nabla g_i(x) \\ 0 \end{array} \right)$. Since the mapping $F(z)$ is continuous, additionally we need to show that it is monotone for the VI problem~\eqref{HVI-prob-2} to be equivalent to the HVI problem~\eqref{finite-sum-HVI-summarize} considered in this paper.
Note the following Jacobian matrices:
\[
J\left[\left( \begin{array}{c} \nabla g_i(x) \\ 0 \end{array} \right)\right]=\left[  \begin{array}{cc} \nabla^2 g_i(x) & 0 \\ 0 & 0 \end{array} \right],\quad J\left[\begin{pmatrix}(Jh_j(x))^\top y\\-h_j(x)\end{pmatrix}\right]=\begin{pmatrix}\sum\limits_{s=1}^\ell y_s  \nabla^2 h_{j,s} (x)& (Jh_j(x))^\top\\-Jh_j(x)&0_{\ell \times \ell} \end{pmatrix},
\]
which are both positive semidefinite since $g_i(x)$, $h_{j,s}(x)$ are convex and $y_s\ge0$. Therefore, we can conclude that the mapping $F(z)$ in the VI reformulation is indeed monotone. As a result, to solve the problem (P) in~\eqref{finite-sum-cons-prob}, we can equivalently solve the following finite-sum HVI problem:
\begin{eqnarray}
\left\{
\begin{array}{l}
     \mbox{find $z^*\in\ZZ:=\XX\times \RR_+^\ell$\,\, s.t.}\,\,\langle H(z^*),z-z^*\rangle+g(x)-g(x^*)\ge0,\,\,\forall z:=\begin{pmatrix}
         x\\y
     \end{pmatrix}\in\ZZ, \\
     H(z):=\sum\limits_{j=1}^{m_1}H_j(z)=\sum\limits_{j=1}^{m_1}\begin{pmatrix}(Jh_j(x))^\top y\\-h_j(x)\end{pmatrix},\quad g(x):=\sum\limits_{i=1}^{m_2}g_i(x).
\end{array}
\right.
\label{HVI-form-finite-sum-const-opt}
\end{eqnarray}

While the efficiency of the variance reduced algorithms for optimization is now commonly recognized when the total number $m_2$ of functions $g_i(x)$ in the summation is large, it is also reasonable to apply similar variance reduced techniques for estimating the constraint functions $h_j(x)$ when the total number $m_1$ in the summation is large, as it can be costly to evaluate all these constraint functions (or their Jacobians) in each iteration.  Problem $(P)$ in \eqref{finite-sum-cons-prob} describes exactly such a situation, and by reformulating the original problem into a finite-sum VI with the special structure \eqref{finite-sum-const-VI-op}, the proposed SAVREP-m in Section \ref{sec:monotone} can be applied. It incorporates variance reduction into the update process for both finite-sum gradient/VI mappings, where the latter is attributed to the (Jacobians) of the constraints $h_i(x)$ and the corresponding dual variable $y$. {To apply the established theoretical results and for implementation purpose, {we also need each component mapping $H_j(z)$ and $\nabla g_i(z)$ to be Lipschitz continuous in addition to monotone, in view of Assumption~\ref{ass:mon-and-Lip} and~\ref{ass:strong-mon-and-Lip}. While in general it is not true for $H_j(z)$ due to the unbounded dual varaible $y$ involved in the mapping,} we may consider the algorithm can be run within a large enough convex compact set that contains one finite primal-dual solution $(x^*,y^*)$. That is, replace $\ZZ$ with a convex compact constraint set $\ZZ'\subset\ZZ$ such that $z^*\in\ZZ'$. {It is then easy to see that $H_j(z)$ is Lipschitz continuous within such compact set. We note that this is also a common approach in analysis if one tries to relax the boundedness assumption for monotone problems. One would assume such compact convex subset containing a solution exists, and the regular merit function will be replaced by a restricted merit function defined on it. See, e.g.\,~\cite{nesterov2007dual} for related discussion.
}

Alternatively, one can also apply SAVREP proposed in Section \ref{sec:strong-monotone}, which instead solves the HVI with strongly monotone vector mapping $H(\cdot)$. While such mapping in our HVI reformulation~\eqref{HVI-form-finite-sum-const-opt} is merely monotone, it can be easily transformed to a strongly monotone mapping by considering the following {\it approximated} HVI problem with the perturbed mapping:
\begin{eqnarray}
H_{\mu}(z):=H(z)+\mu z,\label{finite-sum-const-perturb-map}
\end{eqnarray}
which is strongly monotone with $\mu>0$ with $H(z)$ defined in \eqref{HVI-form-finite-sum-const-opt}. Note that SAVREP only requires $H(z)=\sum\limits_{j=1}^{m_1}H_j(z)$ to be strongly monotone, so the perturbation term $\mu z$ can be associated to $H_j(z)$ for arbitrary $j=1,2,..,m_1$. In particular, we can construct the variance reduced gradient estimators in \eqref{VR-grad-H} as $\hat H(z^{k+0.5}):=H(w^k)+H_{\xi_k}(z^{k+0.5})-H_{\xi_k}(w^k)+\mu z^{k+0.5}$,
where $\xi_k$ randomly samples from $j=1,2,...,m_1$ and $H_j(\cdot)$ is defined in \eqref{HVI-form-finite-sum-const-opt}. The counterpart for $\tilde\nabla g(z^k)$ remains unchanged from \eqref{VR-grad-g}.

{
In general, if we wish to approximate a solution $z^*$ to the VI problem with monotone mapping $F(z)$ and constraint $z\in\ZZ$, we may instead solve for an approximate solution to the ``perturbed'' strongly monotone mapping $F_\mu(z):=F(z)+\mu z$ in $\ZZ$. Under assumption such as boundedness of $\ZZ$, it can be shown that an $\epsilon$-solution to the regularized problem $F_\mu(x)$ (in terms of distance to the solution) is also an $\epsilon$-solution to the original problem with $F(x)$ (in terms of the dual gap function), provided that $\mu=\mathcal{O}(\epsilon)$. Since the HVI problem~\eqref{HVI-form-finite-sum-const-opt} is equivalent to the VI problem with mapping~\eqref{finite-sum-const-VI-op}, replacing $H(z)$ with $H_\mu(z)$ is same as replacing $F(z)$ with $F_\mu(z)$. Therefore, we may apply SAVREP to solve for an approximated solution for small perturbation $\mu$. In practice, the single-loop structure of SAVREP makes it easier to implement compared to its monotone variant SAVREP-m.
}

\section{Numerical Experiments}
\label{sec:numerical}
In this section, we evaluate the numerical performance of SAVREP and SAVREP-m 
by using the same example as in \cite{lin2018level}, which is a Neyman-Pearson
classification problem \cite{tong2016survey} formulated as
\begin{eqnarray}
\min _{\|\mathbf{x}\|_2 \leq \lambda} \frac{1}{n_0} \sum_{j=1}^{n_0} \phi\left(\mathbf{x}^{\top} \xi_{0 j}\right), \text { s.t. } \frac{1}{n_1} \sum_{j=1}^{n_1} \phi\left(-\mathbf{x}^{\top} \xi_{1 j}\right) \leq r_1,\label{NP-classification}
\end{eqnarray}
where $\phi$ is the loss function, defined as smoothed hinge loss function in the experiment for SAVREP and logistic loss function in the experiment for SAVREP-m.
The dataset is the rcv1 training data set from LIBSVM library with $20,242$ data points with $n_0 = 10, 491$ and $n_1 = 9, 751$ and a dimension of $47,236$. {In particular, in the form of reformulated finite-sum HVI problem~\eqref{HVI-form-finite-sum-const-opt}, the number of data points $n_0$ corresponds to $m_2$ and $n_1$ corresponds to $m_1$.} 

\subsection{SAVREP}
\label{sec:numerical-savrep}

\begin{figure}[h!]
  \begin{minipage}[h!]{0.33\linewidth}
    \centering
    \includegraphics[scale=0.3]{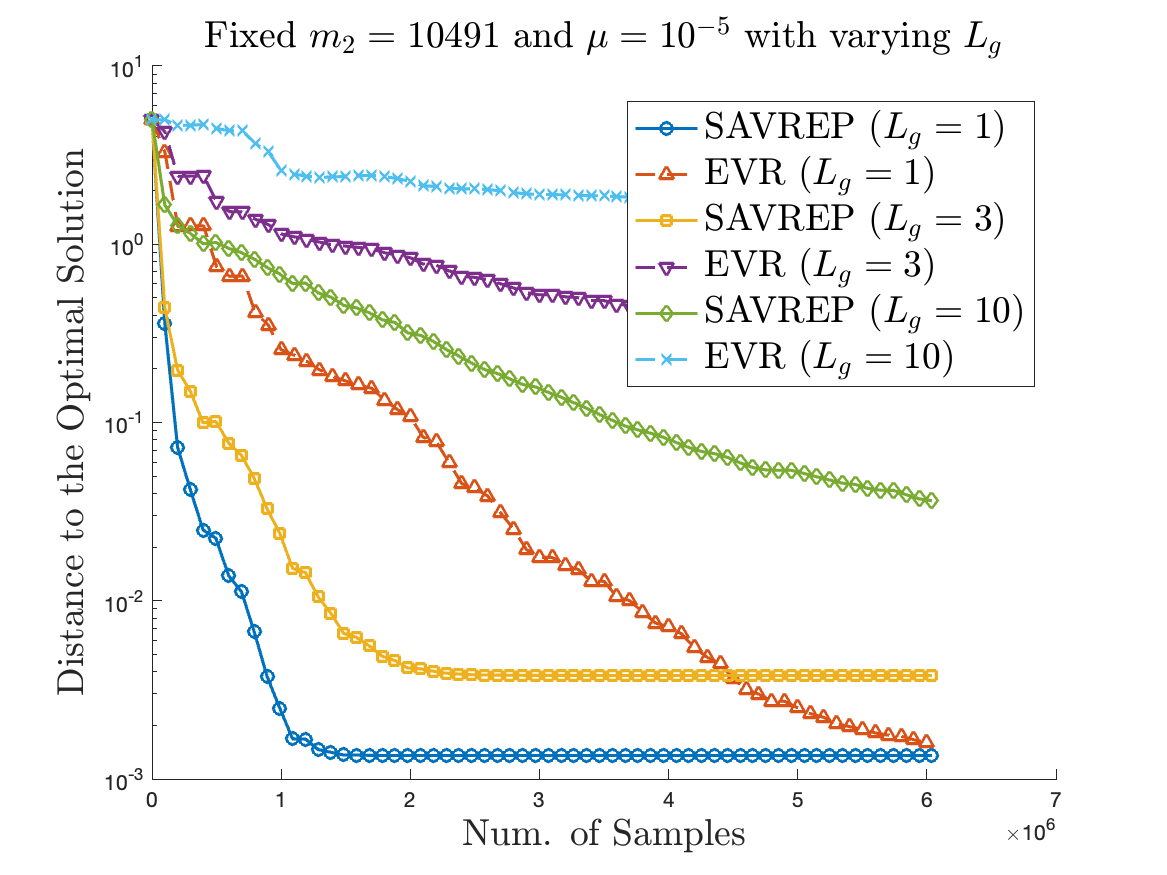}
  \end{minipage}
  \begin{minipage}[h!]{0.33\linewidth}
    \centering
    \includegraphics[scale=0.3]{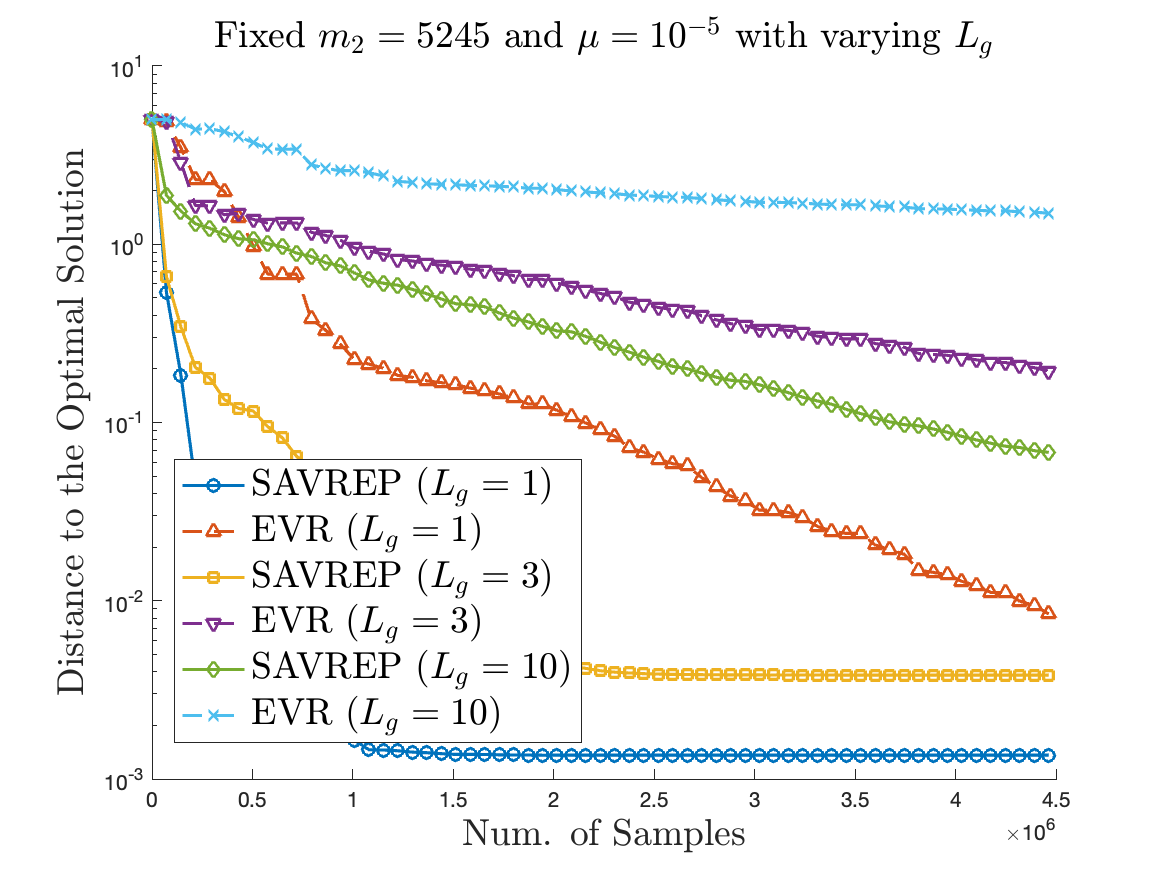}
  \end{minipage}
  \begin{minipage}[h!]{0.33\linewidth}
    \centering
    \includegraphics[scale=0.3]{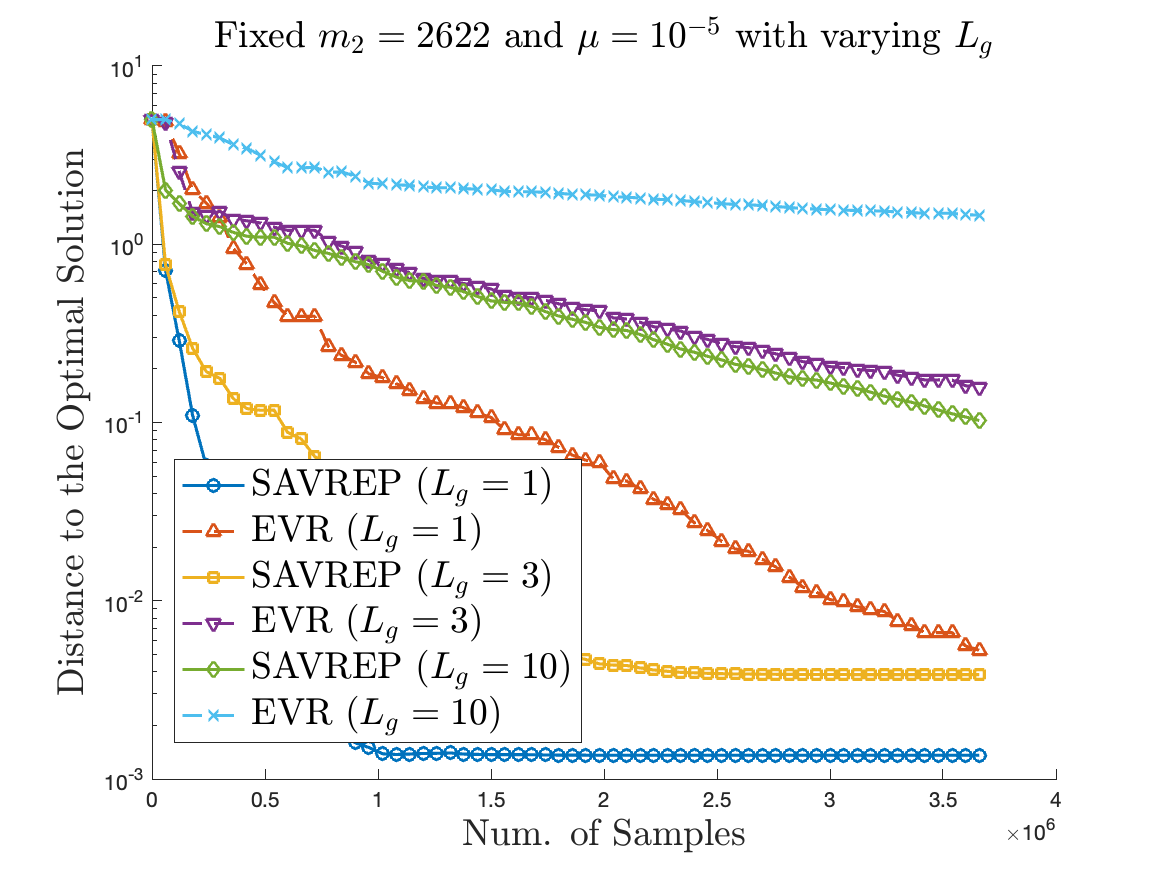}
  \end{minipage}
  \begin{minipage}[h!]{0.33\linewidth}
    \centering
    \includegraphics[scale=0.3]{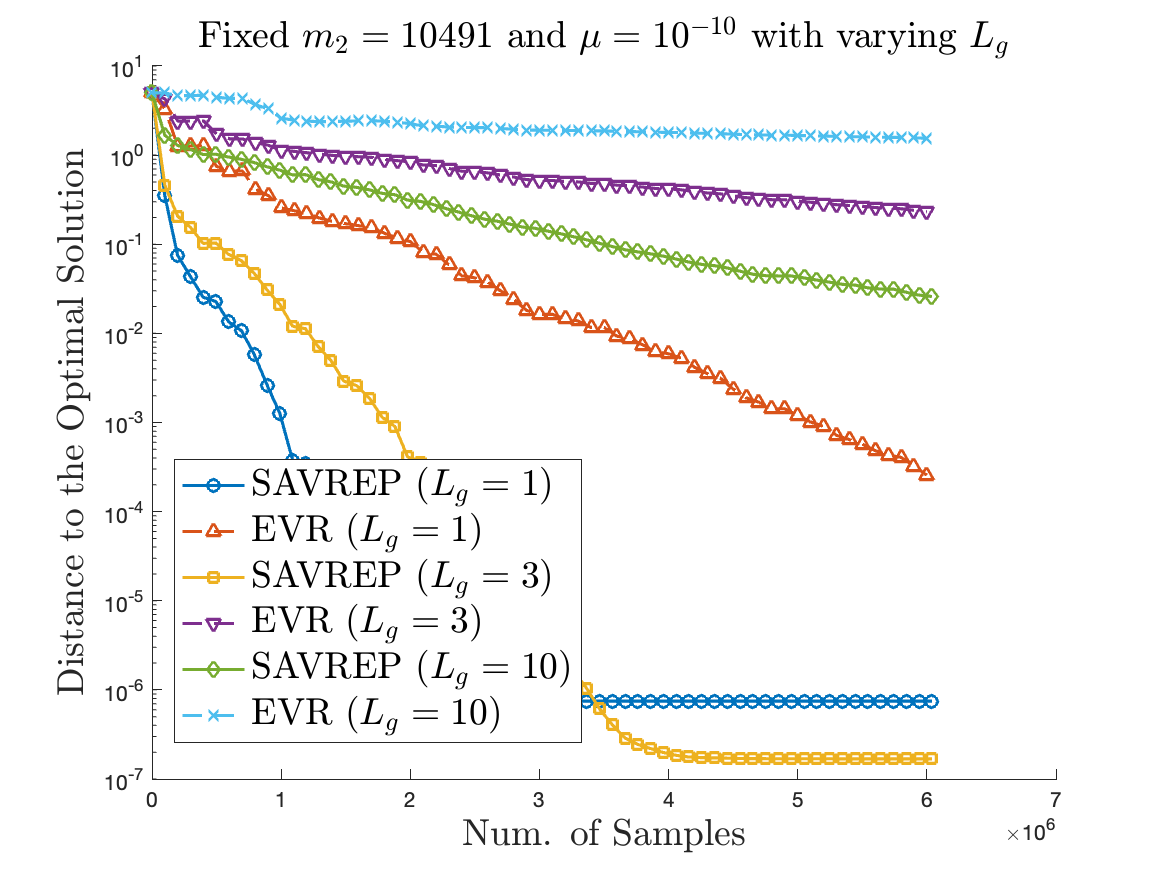}
  \end{minipage}%
  \begin{minipage}[h!]{0.33\linewidth}
    \centering
    \includegraphics[scale=0.3]{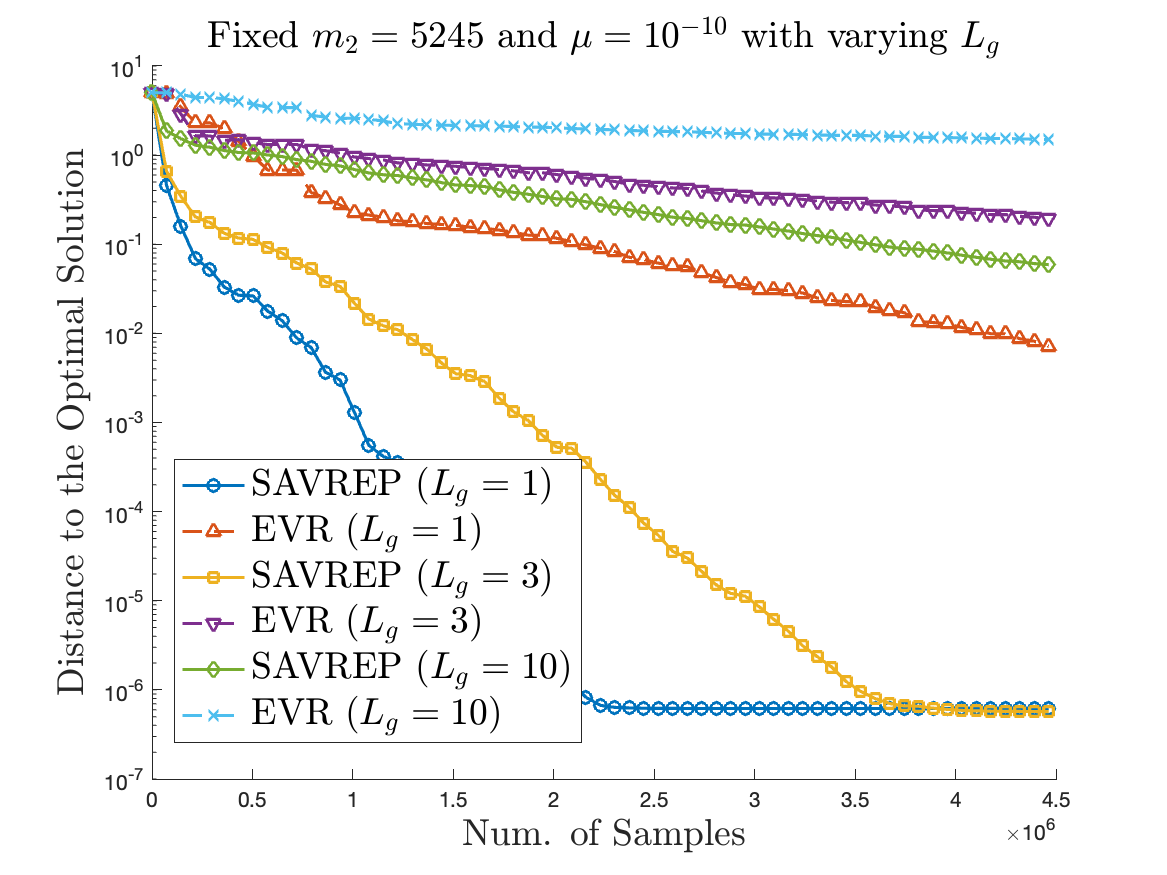}
  \end{minipage}
  \begin{minipage}[h!]{0.33\linewidth}
    \centering
    \includegraphics[scale=0.3]{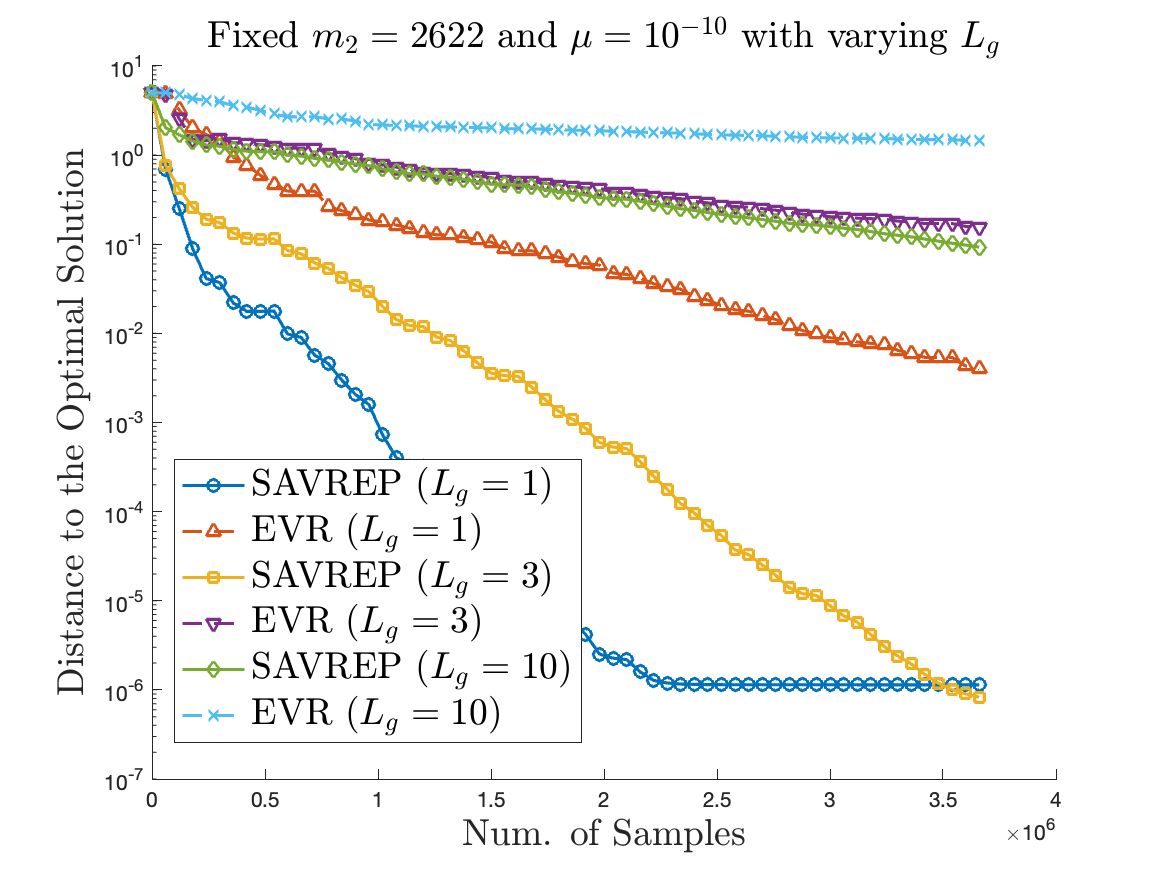}
  \end{minipage}
  \caption{Convergence of SAVREP (distance to the optimal solution): $\mu=10^{-5}$ (first row), $\mu=10^{-10}$ (second row); $m=10491$ (first column), $m=5245$ (second column), $m=2622$ (third column).}
  \label{fig:savrep-conv-fix-m-1-vary-L}
\end{figure}

In this experiment, the loss function is defined as
\begin{eqnarray}
\phi(t)=
\left\{
\begin{array}{ll}
    \frac{1}{2}-t,&t\le 0,\\
    \frac 1 2 (1-t)^2,& 0<t\le 1,\\
    0,& t>1.
\end{array}\right.\nonumber
\end{eqnarray}
To demonstrate the performance of SAVREP, the problem that is actually solved in this experiment is the {\it perturbed problem}~\eqref{finite-sum-const-perturb-map} of the HVI reformulation of the original problem~\eqref{NP-classification}.
The problem parameters are set as $\lambda = 5$ and $r_1 = 0.1$, and the perturbation is set as $\mu=10^{-5}, 10^{-10}$ respectively. We compare the performance of SAVREP with extragradient with variance reduction (EVR)~\cite{alacaoglu2022stochastic}. Both of the methods use the mini-batch with a batch size of $100$ to get the stochastic gradient estimators. {In these experiments, parameter-tuning is performed for $\tau$ in EVR and for $\alpha$ and $\gamma$ in SAVREP. The final parameters used in each of these experiments are determined by multiplying the theoretical values for each method with a learning late. To find the best learning rate, grid-search is performed and the values corresponding to the best convergence performance is used. Appendix~\ref{app:parameters-numerical} summarizes the learning rates used in each experiment, where the corresponding theoretical values can be referred to~\cite{alacaoglu2022stochastic} (Theorem 2.5) for EVR and~\eqref{SAVREP-parameter-choice} for SAVREP.
The results are shown in Figure~\ref{fig:savrep-conv-fix-m-1-vary-L} and Figure~\ref{fig:savrep-conv-fix-L-vary-m}, where we use distance to the optimal solution to the original problem~\eqref{NP-classification} (solved by CVX mosek) as the performance measure. In particular, Figure~\ref{fig:savrep-conv-fix-m-1-vary-L} demonstrates how varying condition number $\frac{L_g}{\mu}$ could have affected the convergence behavior of these two methods. The number of finite-sum convex function components $m_2$ is fixed at different values ($\{10491,5245,2622\}$ from left to right), and the perturbation is fixed at $\mu=10^{-5}$ (first row) or $\mu=10^{-10}$ (second row). Experiments with varying $L_g$ chosen from the set $\{1,3,10\}$ are performed under each of the combination for $m_2$ and $\mu$ above. The results show that our proposed method SAVREP not only converges faster than EVR under the same condition, but it can also possibly outperform EVR even when the condition number $\frac{L_g}{\mu}$ is roughly three times larger for the former. This can be observed from all the graphs in Figure~\ref{fig:savrep-conv-fix-m-1-vary-L} when we compare, for example, SAVREP ($L_g=3$) with EVR ($L_g=1$). Such an advantage is still present but becomes less conspicuous if we compare SAVREP ($L_g=10$) with EVR ($L_g=3$) and can vanish if this difference becomes as large as 10 times, see SAVREP ($L_g=10$) and EVR ($L_g=1$). We may conclude that, the advantages of applying SAVREP over EVR in these experiments manifest from the perspective that the former can better handle a larger condition number $\frac{L_g}{\mu}$ up until a ratio lying approximately between three times and ten times. 

On the other hand, Figure~\ref{fig:savrep-conv-fix-L-vary-m} shows the results for the experiments when $L_g$ is fixed at different values ($\{1,3,10\}$ from left to right) but $m_2$ is chosen from $\{2622, 5245, 10491\}$ (the data points from class $0$ were removed accordingly to reflect the changes in $m_2$). We can see that in general, under the same condition number $\frac{L_g}{\mu}$, SAVREP always performs better than EVR regardless of the different values of $m_2$ set in these experiments. Note that the changes in $m_2$ only bring limited effect to the convergence behavior for both methods, but the effect is slightly more obvious for EVR. Referring to the theoretical bounds in Table~\ref{tab:literature}, for strongly monotone problem both EVR~\cite{alacaoglu2022stochastic} and SAVREP have the same order of dependency on $m_2$ but multiplied by a different factor ($\frac{L_g}{\mu}$ or $\sqrt{\frac{L_g}{\mu}}$). This may explain both methods show similar sensitivity on the changes of $m_2$, but SAVREP always have a better performance. If we compare the performance of same method across different columns (when $L_g$ changes), then indeed EVR shows stronger dependency on the increase of $L_g$ and the performance decays more significantly compared to SAVREP. Overall, from Figure~\ref{fig:savrep-conv-fix-m-1-vary-L} and Figure~\ref{fig:savrep-conv-fix-L-vary-m} we observe that the condition number $\frac{L_g}{\mu}$ has larger influence to the convergence behavior than $m_2$.

}

\begin{figure}[htbp!]
  \begin{minipage}[h!]{0.33\linewidth}
    \centering
    \includegraphics[scale=0.3]{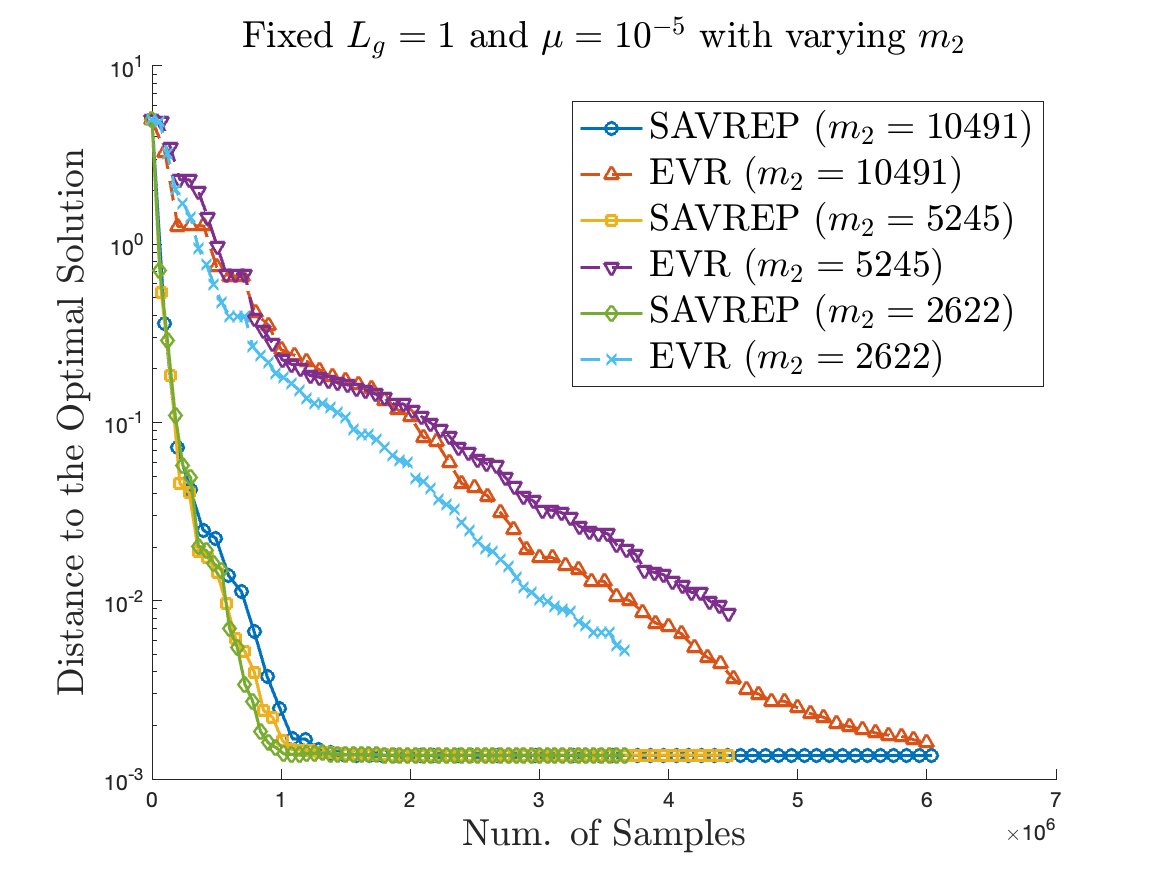}
  \end{minipage}
  \begin{minipage}[h!]{0.33\linewidth}
    \centering
    \includegraphics[scale=0.3]{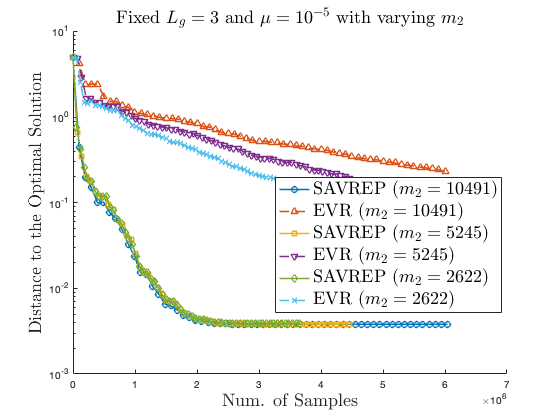}
  \end{minipage}
  \begin{minipage}[h!]{0.33\linewidth}
    \centering
    \includegraphics[scale=0.3]{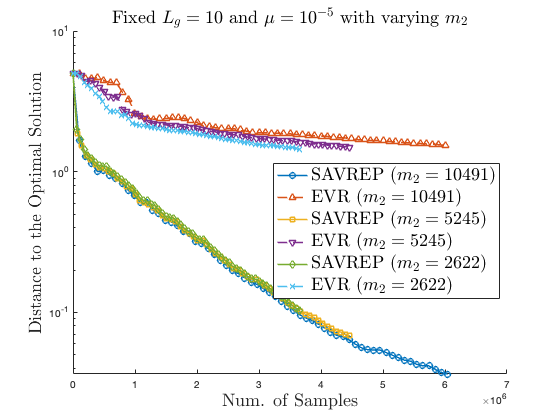}
  \end{minipage}
  \begin{minipage}[h!]{0.33\linewidth}
    \centering
    \includegraphics[scale=0.3]{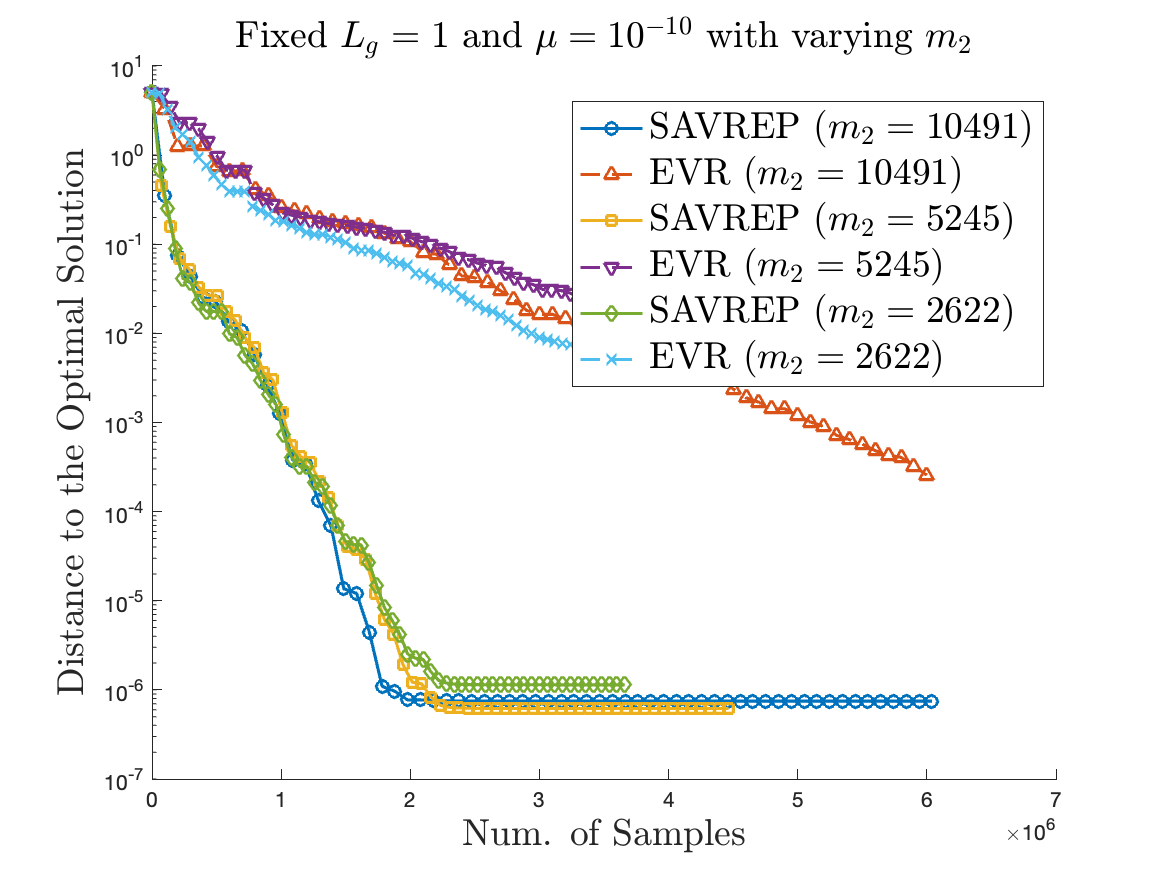}
  \end{minipage}%
  \begin{minipage}[h!]{0.33\linewidth}
    \centering
    \includegraphics[scale=0.3]{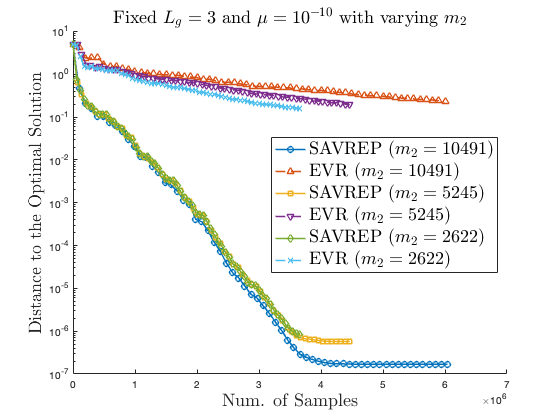}
  \end{minipage}
  \begin{minipage}[h!]{0.33\linewidth}
    \centering
    \includegraphics[scale=0.3]{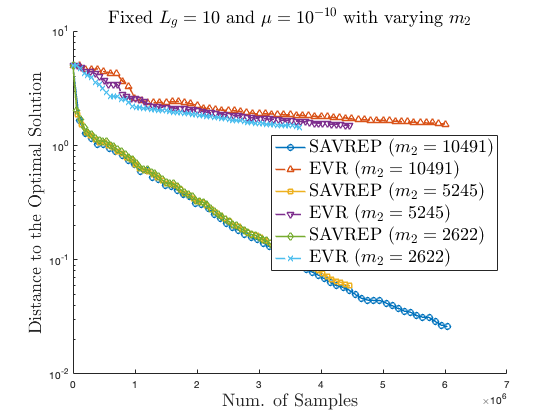}
  \end{minipage}
  \caption{Convergence of SAVREP (distance to the optimal solution): $\mu=10^{-5}$ (first row), $\mu=10^{-10}$ (second row); $L_g=1$ (first column), $L_g=3$ (second column), $L_g=10$ (third column).}
  \label{fig:savrep-conv-fix-L-vary-m}
\end{figure}

\subsection{SAVREP-m}

\begin{figure}[h!]
  \begin{minipage}[h!]{0.33\linewidth}
    \centering
    \includegraphics[scale=0.3]{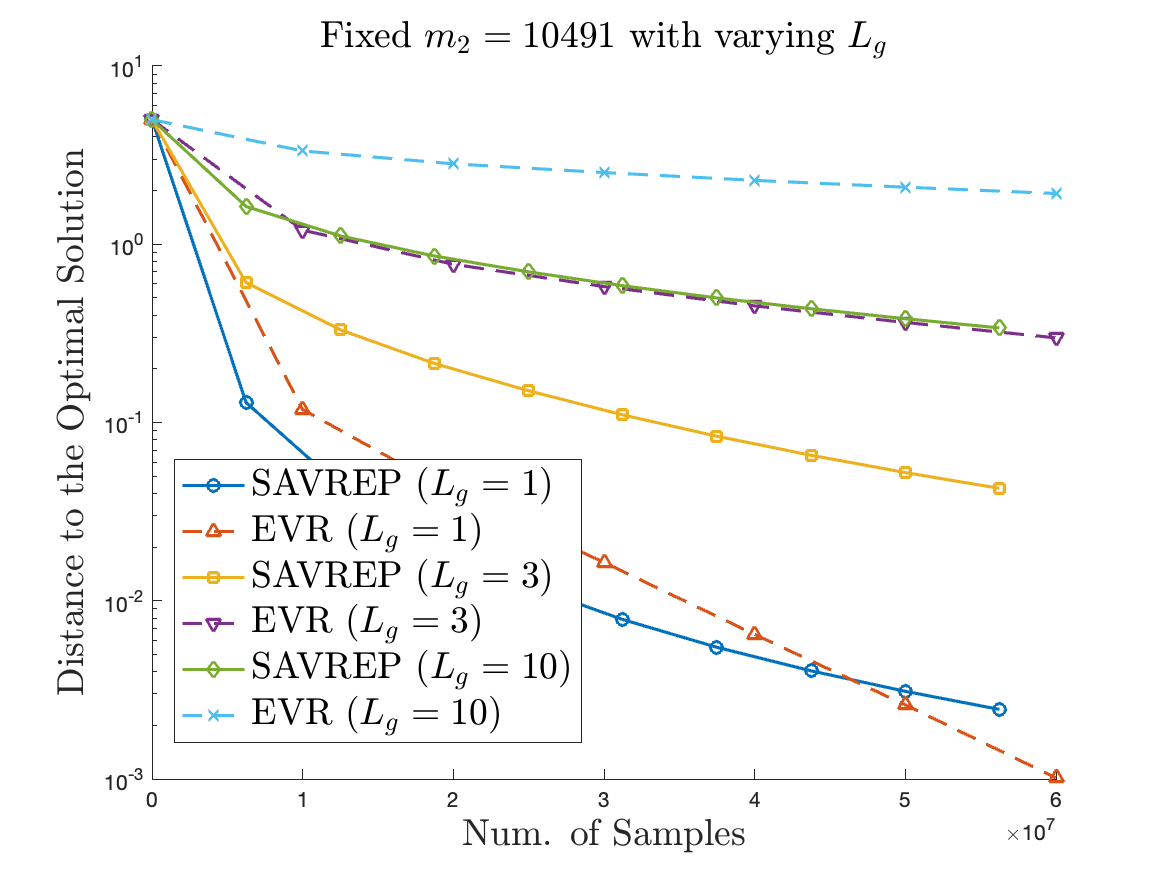}
  \end{minipage}
  \begin{minipage}[h!]{0.33\linewidth}
    \centering
    \includegraphics[scale=0.3]{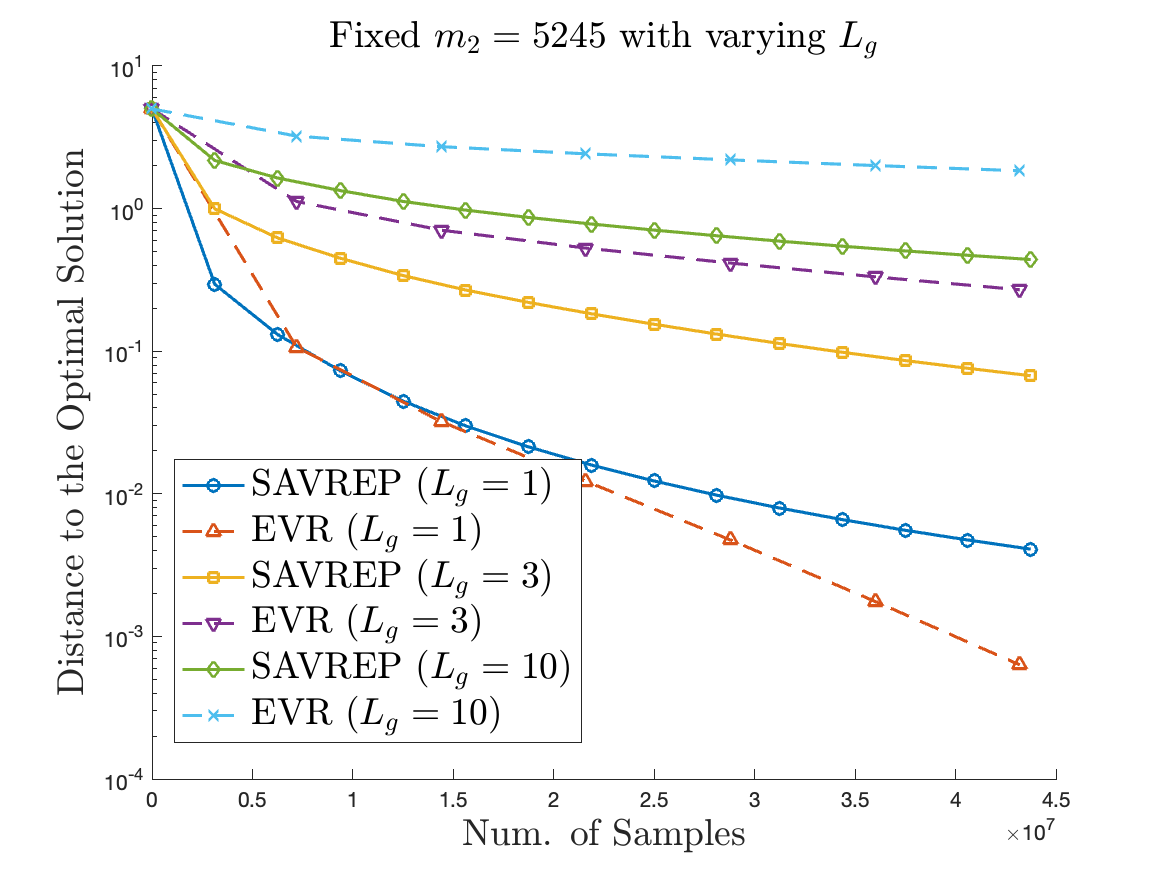}
  \end{minipage}
  \begin{minipage}[h!]{0.33\linewidth}
    \centering
    \includegraphics[scale=0.3]{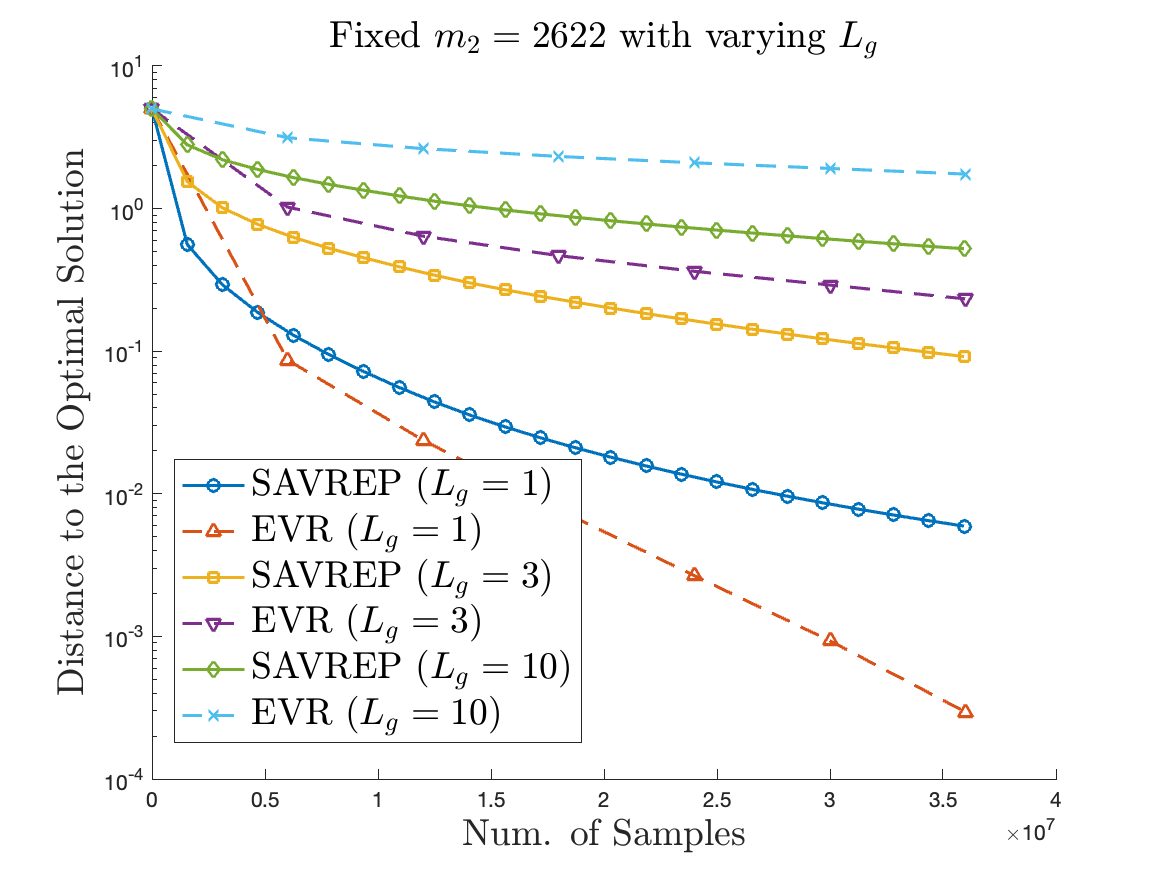}
  \end{minipage}
  \begin{minipage}[h!]{0.33\linewidth}
    \centering
    \includegraphics[scale=0.3]{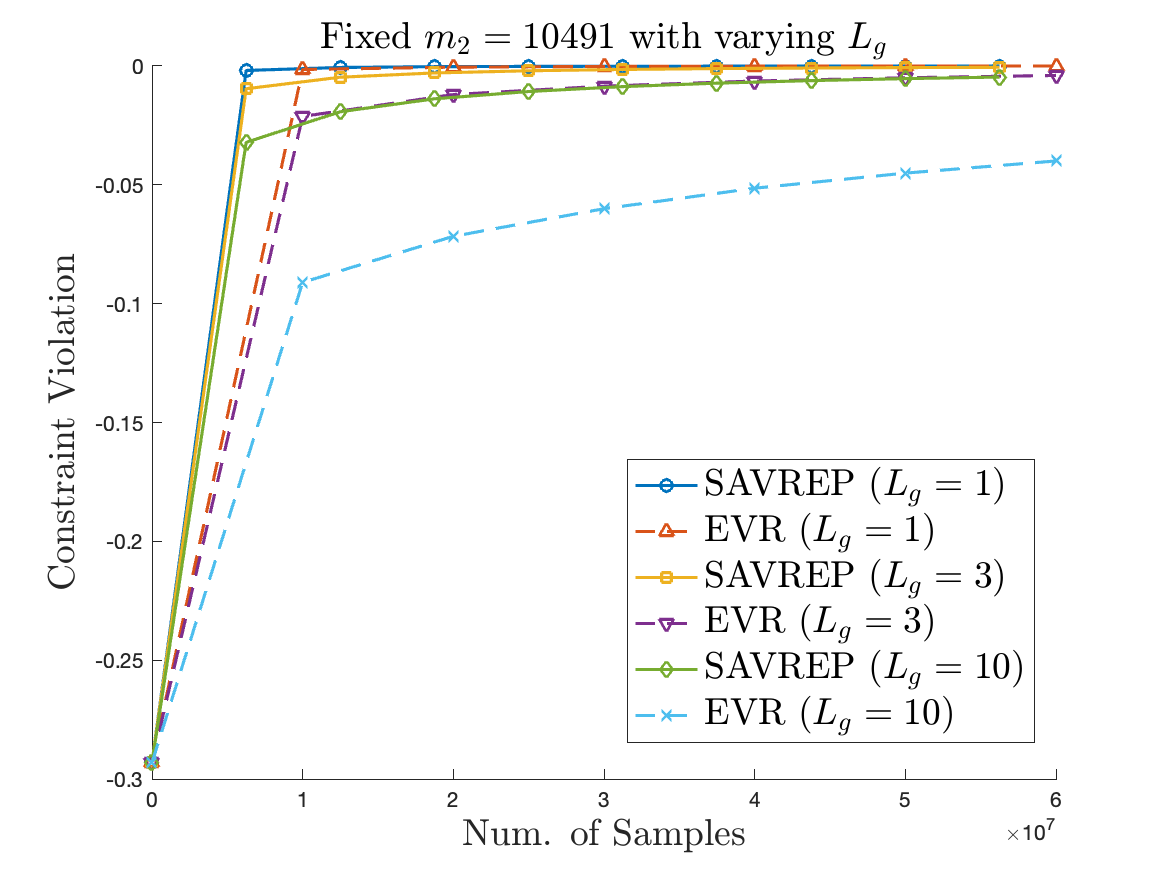}
  \end{minipage}%
  \begin{minipage}[h!]{0.33\linewidth}
    \centering
    \includegraphics[scale=0.3]{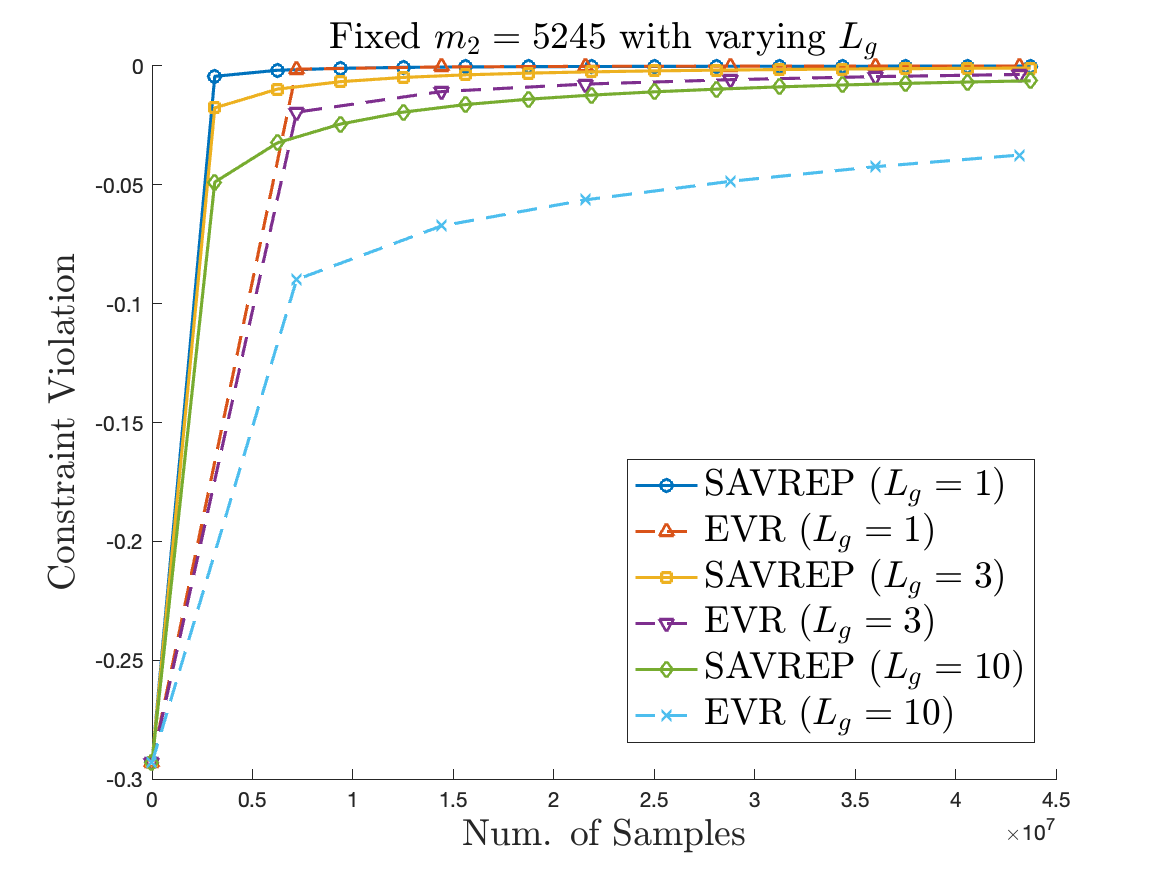}
  \end{minipage}
  \begin{minipage}[h!]{0.33\linewidth}
    \centering
    \includegraphics[scale=0.3]{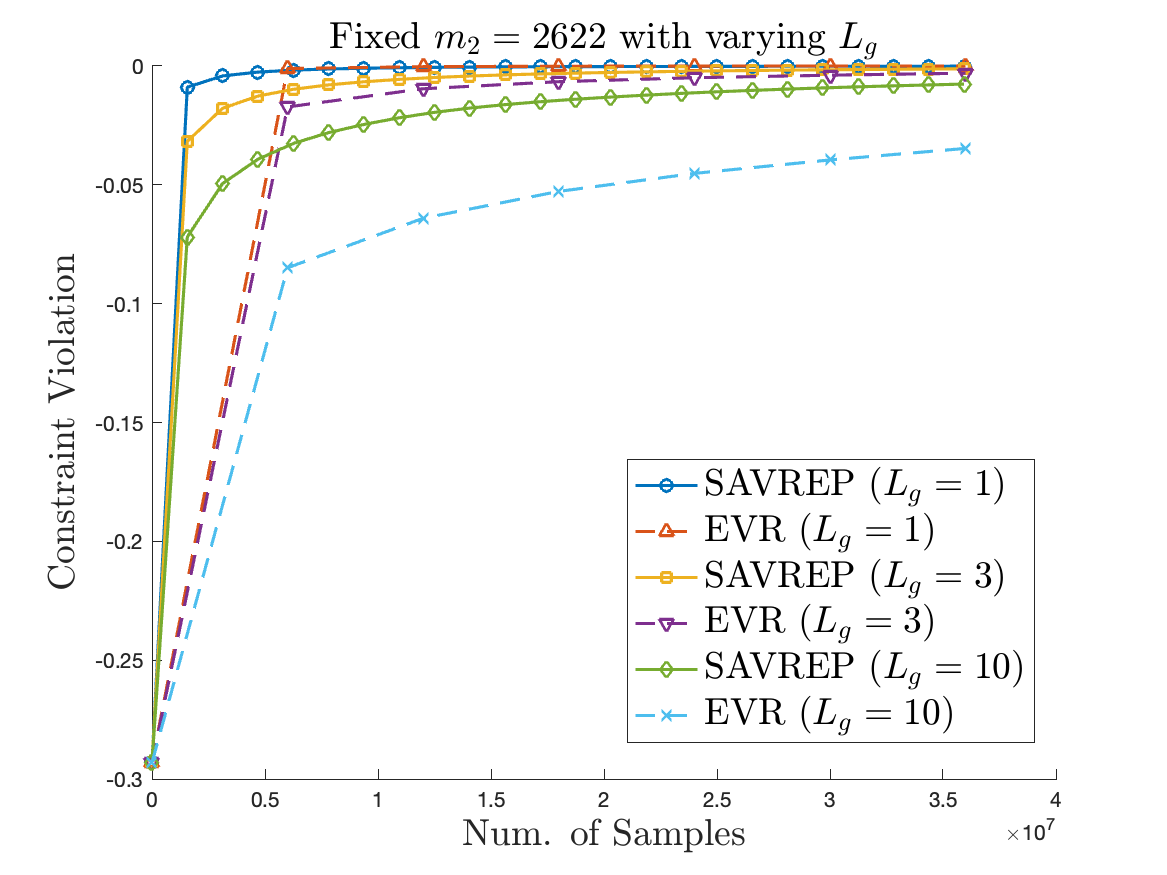}
  \end{minipage}
  \begin{minipage}[h!]{0.33\linewidth}
    \centering
    \includegraphics[scale=0.3]{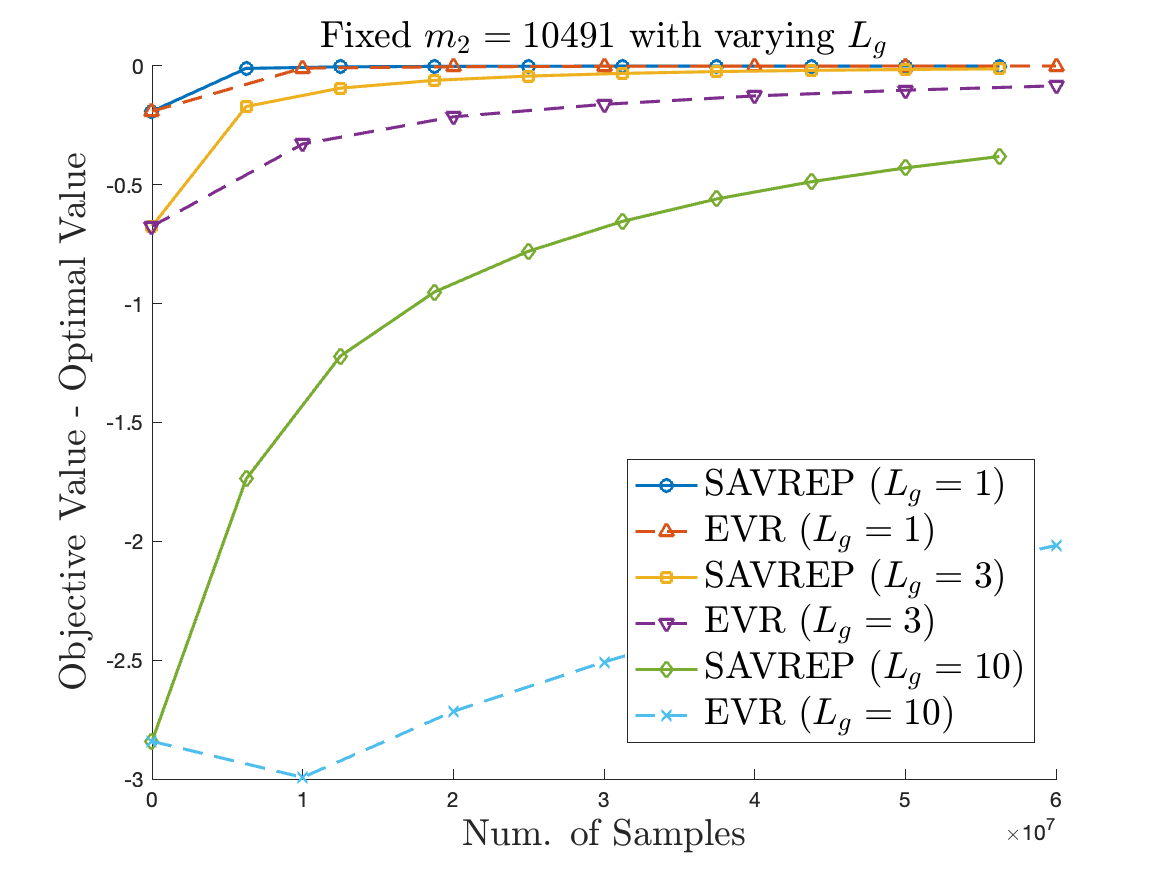}
  \end{minipage}
  \begin{minipage}[h!]{0.33\linewidth}
    \centering
    \includegraphics[scale=0.3]{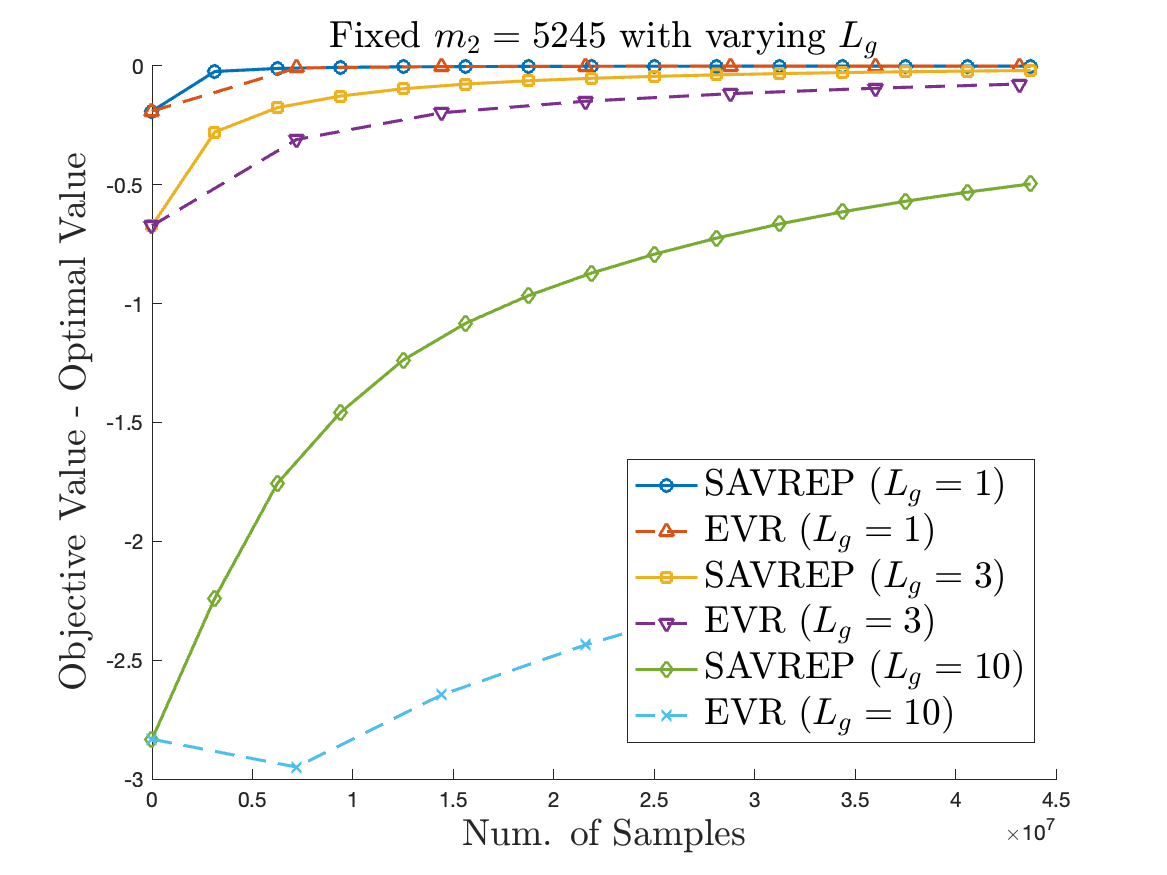}
  \end{minipage}
  \begin{minipage}[h!]{0.33\linewidth}
    \centering
    \includegraphics[scale=0.3]{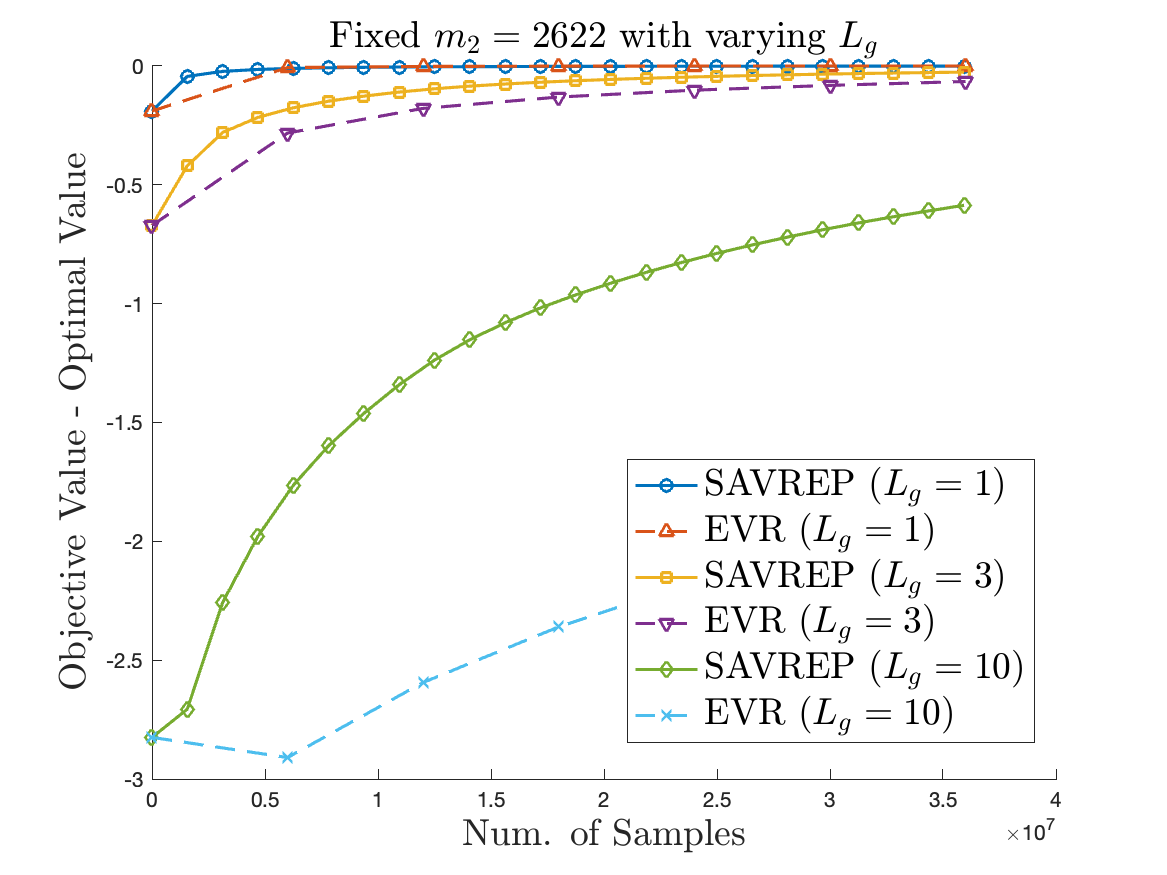}
  \end{minipage}
  \caption{Convergence of SAVREP-m: distance to the optimal solution (first row), constraint violation (second row), and objective function gap (third row); $m=10491$ (first column), $m=5245$ (second column), $m=2622$ (third column).}
  \label{fig:savrep-m-conv-1-fix-m-1-vary-L}
\end{figure}

In {this set of experiments}, we test SAVREP-m on the same problem \eqref{NP-classification}, using the HVI reformulation without perturbation \eqref{HVI-form-finite-sum-const-opt}. {The parameter-tuning follows the same process as described in Section~\ref{sec:numerical-savrep}, where the values of the learning rate are summarized in Appendix~\ref{app:parameters-numerical}.} The loss function is defined as the logistic loss function, i.e.\,$\phi(t)=\log(1+\exp(-t))$, with $\lambda=5$ and $r_1=0.4$. 
{
The convergence results are given in Figure~\ref{fig:savrep-m-conv-1-fix-m-1-vary-L} and Figure~\ref{fig:savrep-m-conv-1-fix-L-vary-m}, showing the distance to optimal solution in the first row, constraint violation in the second row, and objective function gap in the third row, respectively. 

\begin{figure}[h!]
  \begin{minipage}[h!]{0.33\linewidth}
    \centering
    \includegraphics[scale=0.3]{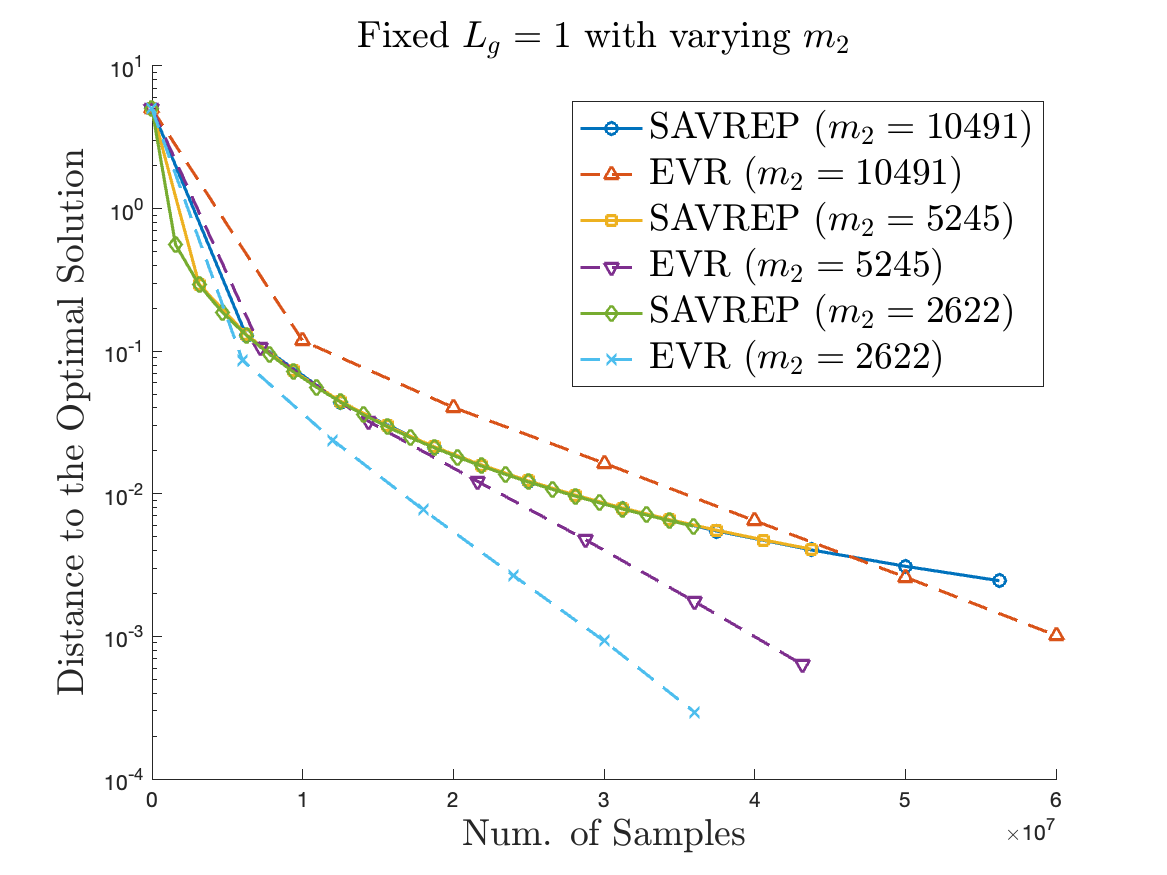}
  \end{minipage}
  \begin{minipage}[h!]{0.33\linewidth}
    \centering
    \includegraphics[scale=0.3]{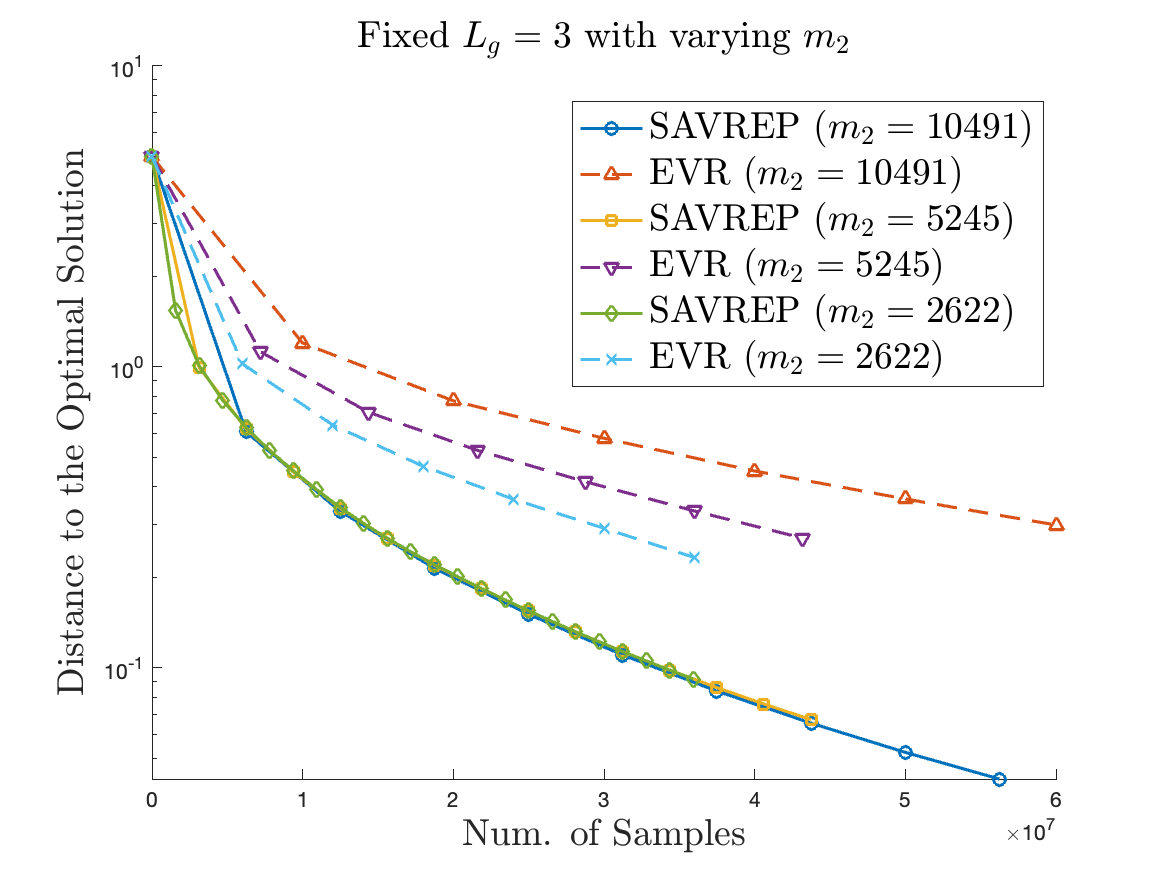}
  \end{minipage}
  \begin{minipage}[h!]{0.33\linewidth}
    \centering
    \includegraphics[scale=0.3]{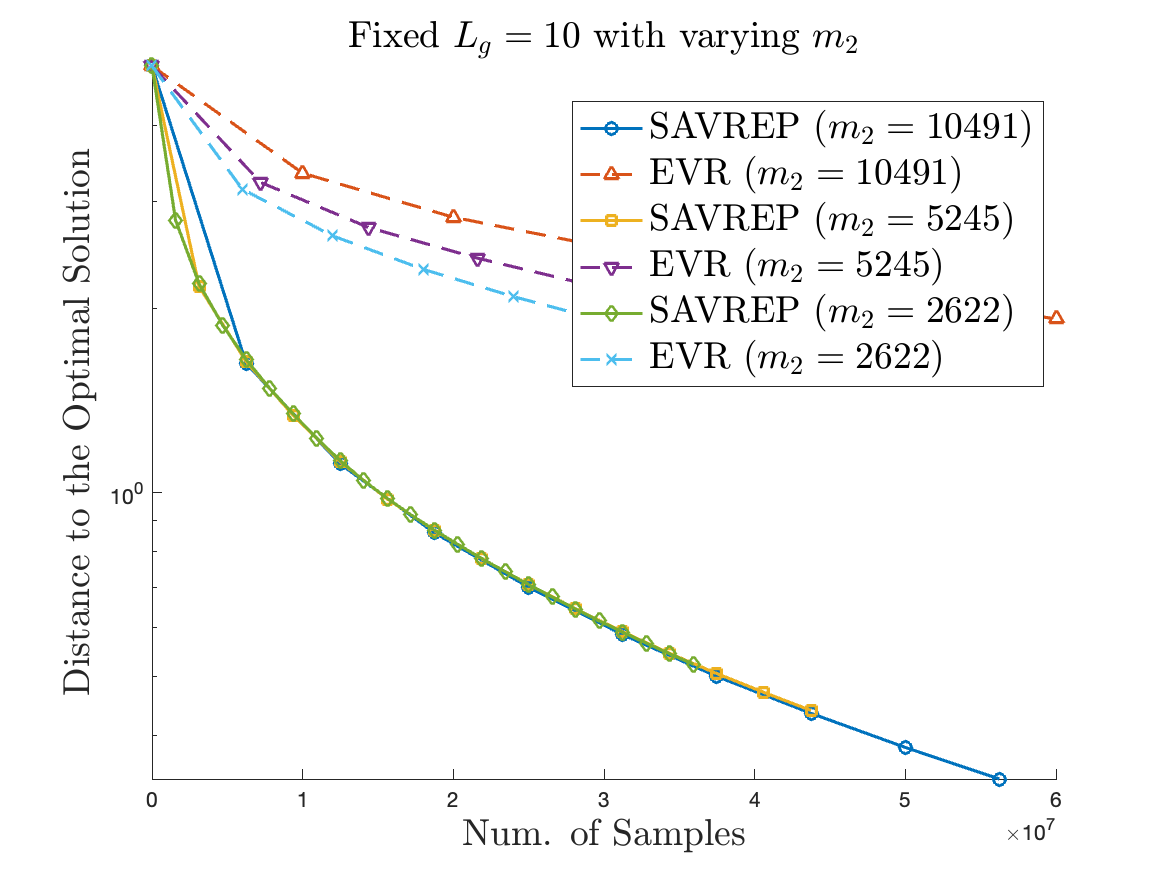}
  \end{minipage}
  \begin{minipage}[h!]{0.33\linewidth}
    \centering
    \includegraphics[scale=0.3]{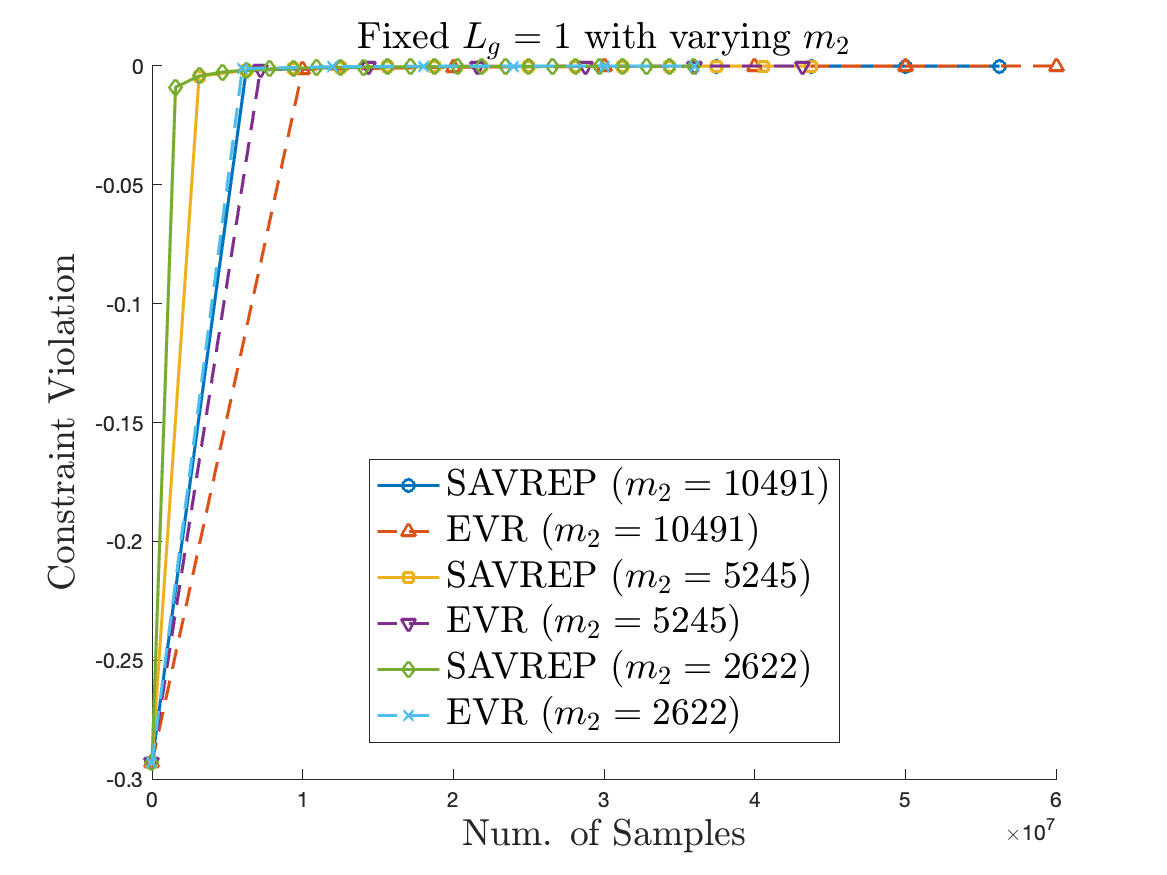}
  \end{minipage}%
  \begin{minipage}[h!]{0.33\linewidth}
    \centering
    \includegraphics[scale=0.3]{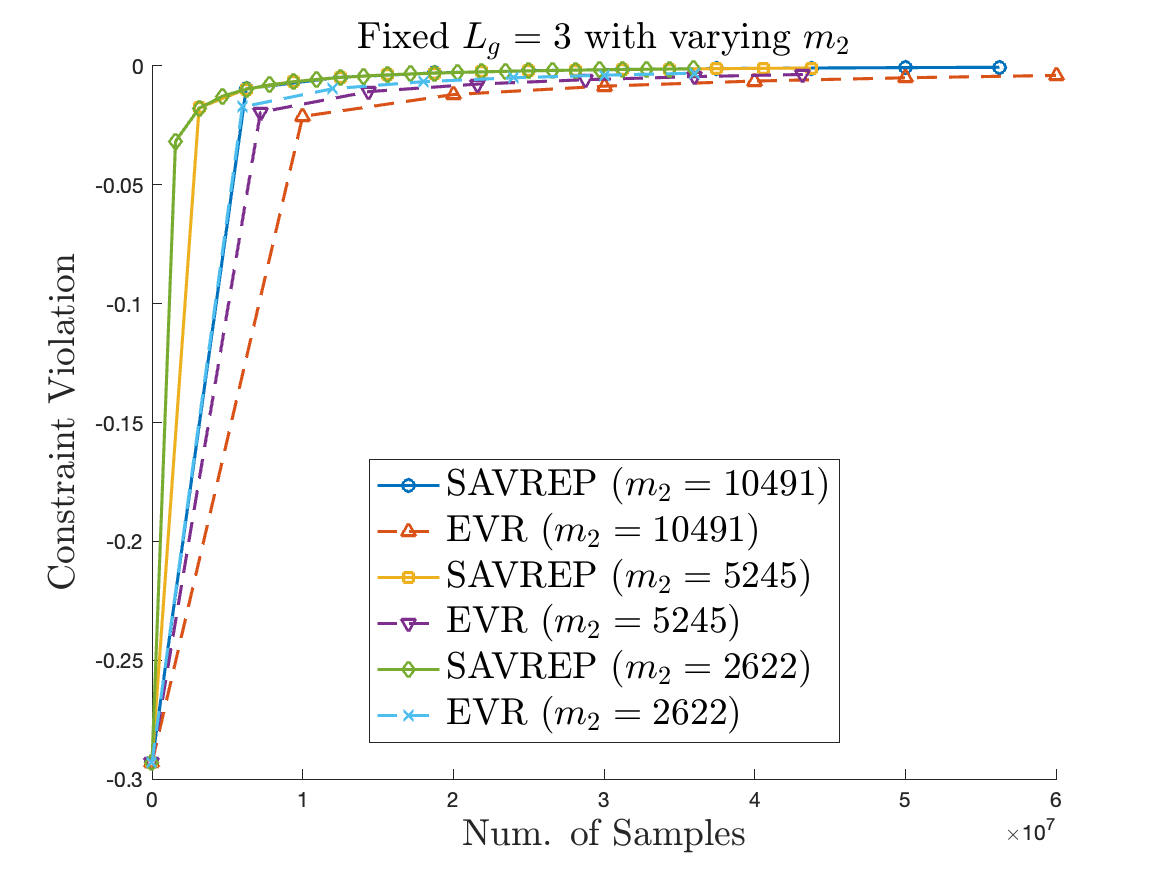}
  \end{minipage}
  \begin{minipage}[h!]{0.33\linewidth}
    \centering
    \includegraphics[scale=0.3]{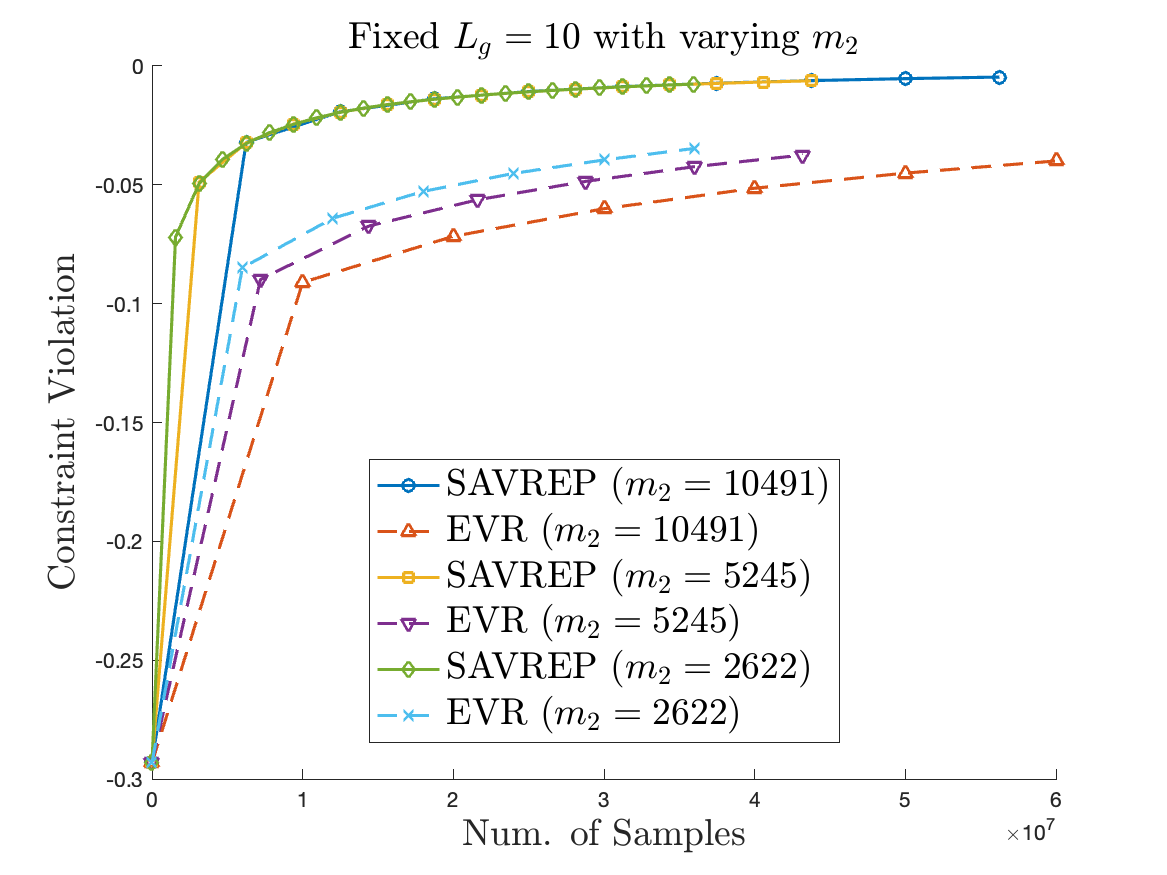}
  \end{minipage}
  \begin{minipage}[h!]{0.33\linewidth}
    \centering
    \includegraphics[scale=0.3]{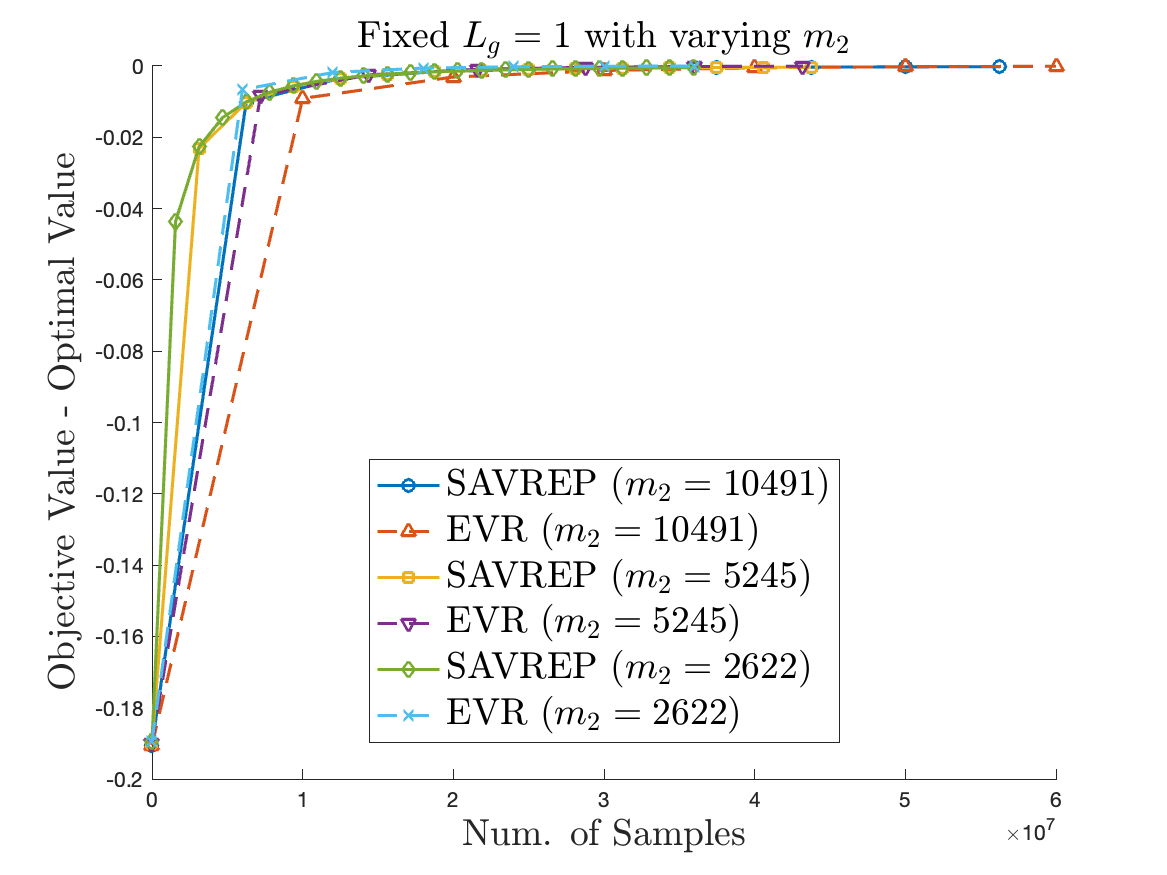}
  \end{minipage}
  \begin{minipage}[h!]{0.33\linewidth}
    \centering
    \includegraphics[scale=0.3]{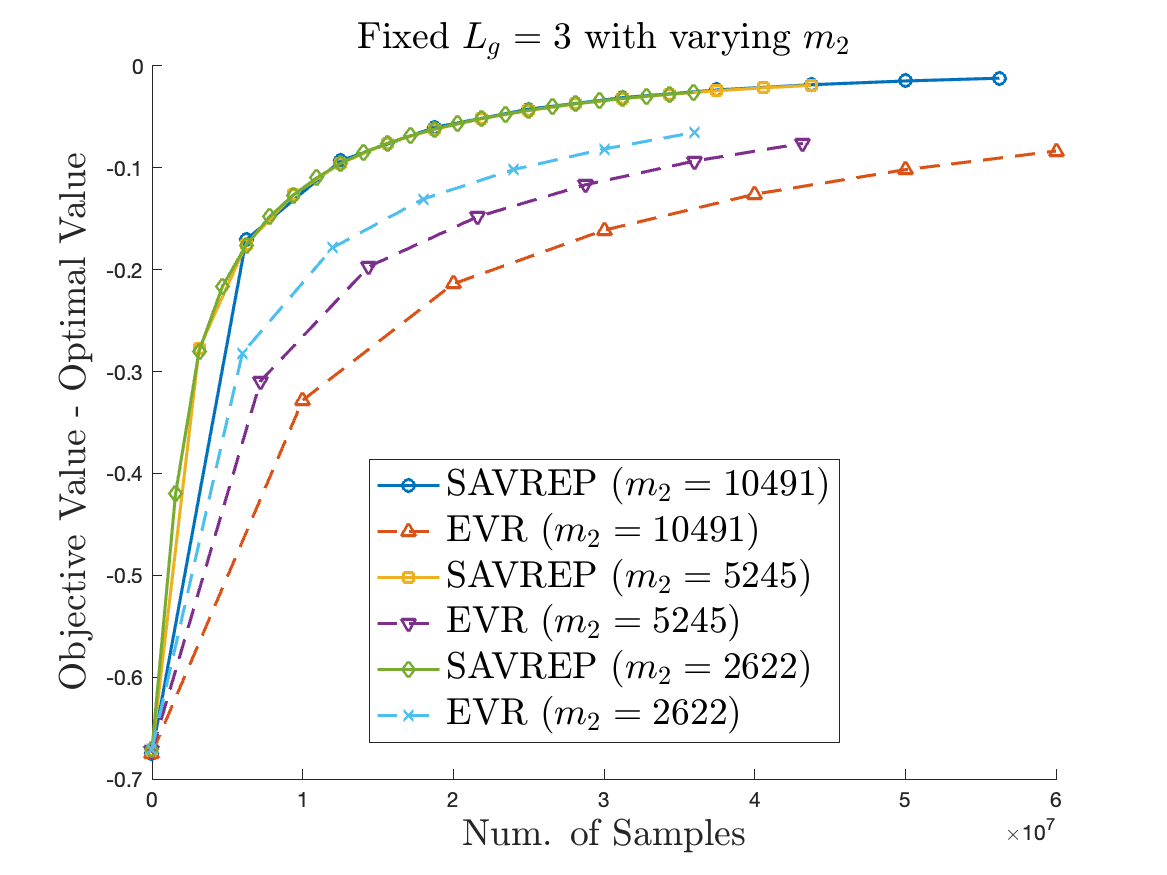}
  \end{minipage}
  \begin{minipage}[h!]{0.33\linewidth}
    \centering
    \includegraphics[scale=0.3]{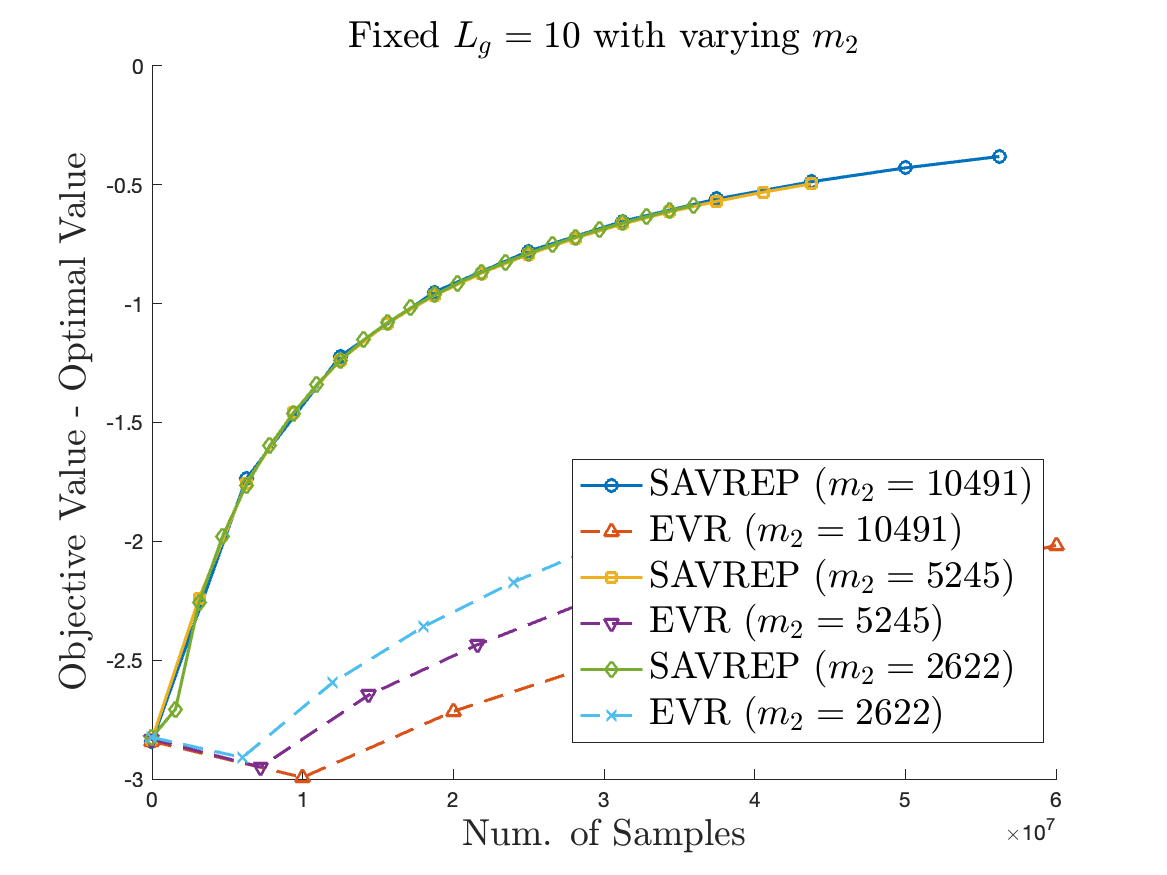}
  \end{minipage}
  \caption{Convergence of SAVREP-m: distance to the optimal solution (first row), constraint violation (second row), and objective function gap (third row); $L_g=1$ (first column), $L_g=3$ (second column), $L_g=10$ (third column).}
  \label{fig:savrep-m-conv-1-fix-L-vary-m}
\end{figure}

In Figure~\ref{fig:savrep-m-conv-1-fix-m-1-vary-L}, each column represents the results from experiments where $m_2$ is fixed at different values ($\{10491,5245,2622\}$ from left to right) with $L_g$ varying among $\{1,3,10\}$. Similar to the results for SAVREP in Figure~\ref{fig:savrep-conv-fix-m-1-vary-L}, we see that the the changes in $L_g$ bring conspicuous impact to the convergence speed. The major difference from the previous results lies in the fact that now EVR can perform better than SAVREP-m when $L_g$ is smaller (e.g.\,$L_g=1$), and that is specifically obvious when $m_2$ is also smaller (top right in Figure~\ref{fig:savrep-m-conv-1-fix-m-1-vary-L}). On the other hand, such advantage in the convergence rate observed from EVR vanishes when $L_g$ increases. The proposed SAVREP-m starts to outperform EVR when $L_g$ is increased to 3 and 10, and such performance gap grows even larger as $m_2$ increases from 2622 (right) to 10491 (left). Similar trends can be observed for constraint violation (second row) and objective value gap (third row), where the advantages for SAVREP-m stand out the most in experiments with $L_g=10$ and especially for $m_2=10491$. These results show that indeed EVR is more sensitive to the change of $L_g$ with a dependency of $\frac{L_g}{\epsilon}$ compared to $\sqrt{\frac{L_g}{\epsilon}}$ for SAVREP-m. The term $m_2\sqrt{\frac{1}{\epsilon}}$ in the complexity bound for SAVREP-m can be the major reason of it generating slower convergence than EVR when $L_g$ is smaller. However, increasing $L_g$ or $m_2$ seems to make this term less dominant and allows SAVREP-m to outperform EVR again. Note that for the metric of distance to optimal solution, EVR demonstrates a convergence behavior that is closer to linear convergence rather than sublinear convergence, particularly for $L_g=1$. This is likely due to hidden/local strong monotonicity of the reformulated problem even without any perturbation. While EVR is more capable of adapting to such hidden problem structure, SAVREP-m is specifically designed for problem that is merely monotone by explicitly adopting diminishing step sizes. Such practice results in a better theoretical dependence on the parameter $L_g$ but can be less adaptive when applied to strongly monotone problems. On the other hand, the linear convergence behavior for EVR also becomes less obvious as $L_g$ increases, indicating a possibly weakening effect from strong monotonicity.
}

{
In Figure~\ref{fig:savrep-m-conv-1-fix-L-vary-m}, each column represents the results from experiments where $L_g$ is fixed at different values ($\{1,3,10\}$ from left to right) with $m_2$ varying among $\{2622,5245,10491\}$. Similar to Figure~\ref{fig:savrep-m-conv-1-fix-m-1-vary-L}, EVR shows linear convergence when $L_g=1$, due to possibly stronger effect of (hidden) strong monotonicity. As $L_g$ increases, such linear convergence deteriorates into sublinear convergence, and the overall performance is outperformed by SAVREP-m by an increasingly larger gap. Similar trends can be observed in the convergence in terms of constraint violation as well as objective value gap. In general, the impact on the convergence behavior from changing $m_2$ is rather limited for both methods, while the performance of EVR indeed can be more subject to the increase in the parameter $L_g$. On the other hand, if $L_g$ is fixed, then EVR is also more sensitive to the change in $m_2$, and this is likely due to the combined effect of $m_2$ and $L_g$ in the same term, where $L_g$ can enlarge the effect caused by $m_2$. These results indeed align closely with the theoretical bounds summarized in Table~\ref{tab:literature}.
}

\section{Conclusions}
\label{sec:conclusion}
In this paper, we propose two stochastic variance reduced algorithms, SAVREP-m and SAVREP, for solving the finite-sum HVI problem with gradient Lipschitz function. In particular, both the vector mapping and the function involved in the HVI problem are of finite-sum structure. By exploiting this specific problem structure together with the variance reduction techniques developed in the literature, the proposed algorithms are able to {achieve gradient complexities that match the optimal bounds given by~\cite{alacaoglu2022stochastic} for finite-sum VI, as well as the accelerated bounds given by $\mbox{Katyusha}^{\scriptsize \mbox{ns}}$~\cite{allen2017katyusha} for finite-sum optimization.}
We show that an application of finite-sum optimization with finite-sum inequality constraints can be reformulated into the specific finite-sum HVI problem discussed in this paper, where the proposed schemes can be readily applied to. Preliminary numerical results are also provided to verify the convergence of our schemes, while demonstrating the merits of including variance reduction for the finite-sum function when solving this kind of finite-sum HVI problem.

{Finally, we note that while the proposed methods match the bounds for $\mbox{Katyusha}^{\scriptsize \mbox{ns}}$ when the HVI specializes to a pure (finite-sum) optimization problem, it is only optimal for strongly convex problems but suboptimal for convex problems. On the other hand, Varag proposed in~\cite{lan2019unified} has achieved an $m_2\log m_2$ complexity instead of $m_2\sqrt{\frac{1}{\epsilon}}$ given by $\mbox{Katyusha}^{\scriptsize \mbox{ns}}$ for solving convex finite-sum optimization. It remains to be open questions how these improvements can be made to achieve near-optimal complexity given by Varag or even optimal complexity suggested by the lower bound~\cite{lan2018optimal, woodworth2016tight} in the context of finite-sum HVI problem under the convex/monotone setting. We leave these interesting research questions to future work.}

\smallskip
 \noindent
{ {\bf Acknowledgment: } The authors would like to thank the two anonymous referees for their insightful 
comments that greatly helped improve this paper.}

\printbibliography

\begin{appendices}
\section{Proofs of some Technical Results}

{
\subsection{Proof of Lemma~\ref{lem:savrep-m-VI-bd}}
\label{proof:savrep-m-VI-bd}
The optimality conditions at $x^{k+0.5}$ and $x^{k+1}$ yield
\begin{eqnarray}
&&\langle\gamma_k\left({H}(w^k)+\tilde\nabla {g}(y^k)\right)+x^{k+0.5}-\Bar{x}^k,x-x^{k+0.5}\rangle\ge0,\quad\forall x\in\mathcal{Z}, \label{VI-opt-1-m}\\
&&\langle\gamma_k\left(\hat {H}(x^{k+0.5})+\tilde\nabla {g}(y^k)\right)+x^{k+1}-\Bar{x}^k,x-x^{k+1}\rangle\ge0,\quad\forall x\in\mathcal{Z}.\label{VI-opt-2-m}
\end{eqnarray}
{
From \eqref{VI-opt-2-m} and the definition of $\Bar{x}^k$ in~\eqref{savrep-m-Update}:
\begin{eqnarray}
&&\frac{1}{2}\left(\|x^{k+1}-x\|^2+(1-p_1)\|x^{k+1}-x^k\|^2-(1-p_1)\|x^k-x\|^2+p_1\|x^{k+1}-w^k\|^2-p_1\|w^k-x\|^2\right)\nonumber\\
&=&(1-p_1)\langle x^{k+1}-x^k,x^{k+1}-x\rangle+p_1\langle x^{k+1}-w^k,x^{k+1}-x\rangle\nonumber\\
&=&\langle x^{k+1}-\bar x^k,x^{k+1}-x\rangle\nonumber\\
&\le&\gamma_k\langle\hat{{H}}(x^{k+0.5})+\tilde\nabla {g}(y^k),x-x^{k+1}\rangle.\label{VI-bd-0-m}
\end{eqnarray}
To further upper bound~\eqref{VI-bd-0-m}, we first use the following identity:
}
\begin{eqnarray}
&&\gamma_k\langle\hat{{H}}(x^{k+0.5})+\tilde\nabla {g}(y^k),x-x^{k+1}\rangle\nonumber\\
&=& \gamma_k\langle H(x^{k+0.5})+\tilde\nabla {g}(y^k),x-x^{k+0.5}\rangle+\gamma_k\langle\hat{{H}}(x^{k+0.5})-H(x^{k+0.5}),x-x^{k+0.5}\rangle\nonumber\\
&&+\gamma_k\langle {H}(w^k)+\tilde\nabla {g}(y^k),x^{k+0.5}-x^{k+1}\rangle+\gamma_k\langle\hat{{H}}(x^{k+0.5})-{H}(w^k),x^{k+0.5}-x^{k+1}\rangle.\label{VI-bd-1-m}
\end{eqnarray}
The third term in~\eqref{VI-bd-1-m} can be bounded by using \eqref{VI-opt-1-m} with $x=x^{k+1}$:
\begin{eqnarray}
&&\gamma_k\langle {H}(w^k)+\tilde\nabla {g}(y^k),x^{k+0.5}-x^{k+1}\rangle\le \langle x^{k+0.5}-\Bar{x}^k,x^{k+1}-x^{k+0.5}\rangle\nonumber\\
&=& (1-p_1)\langle x^{k+0.5}-x^k,x^{k+1}-x^{k+0.5}\rangle+p_1\langle x^{k+0.5}-w^k,x^{k+1}-x^{k+0.5}\rangle\nonumber\\
&=& \frac{1}{2}\left(-\|x^{k+1}-x^{k+0.5}\|^2+(1-p_1)\|x^{k+1}-x^k\|^2-(1-p_1)\|x^{k+0.5}-x^k\|^2\right.\nonumber\\
&&\left.+p_1\|x^{k+1}-w^k\|^2-p_1\|x^{k+0.5}-w^k\|^2\right),\nonumber
\end{eqnarray}
while the fourth term in~\eqref{VI-bd-1-m} can be bounded by (definition of $\hat H(x^{k+0.5})$ given~\eqref{VR-grad-H}):
\begin{eqnarray}
&&\gamma_k\langle\hat{{H}}(x^{k+0.5})-{H}(w^k),x^{k+0.5}-x^{k+1}\rangle=\gamma_k\langle {H}_{\xi_k}(x^{k+0.5})-{H}_{\xi_k}(w^k),x^{k+0.5}-x^{k+1}\rangle\nonumber\\
&\le& \gamma_k^2\|{H}_{\xi_k}(x^{k+0.5})-{H}_{\xi_k}(w^k)\|^2+\frac{1}{4}\|x^{k+0.5}-x^{k+1}\|^2.\nonumber
\end{eqnarray}
Combining the above two inequalities with \eqref{VI-bd-1-m}, we get: 
\begin{eqnarray}
&&\gamma_k\langle\hat{{H}}(x^{k+0.5})+\tilde\nabla {g}(y^k),x-x^{k+1}\rangle\nonumber\\
&\le& \gamma_k\langle H(x^{k+0.5})+\tilde\nabla {g}(y^k),x-x^{k+0.5}\rangle+\gamma_k\langle\hat {H}(x^{k+0.5})-H(x^{k+0.5}),x-x^{k+0.5}\rangle\nonumber\\
&&+\frac{1}{2}\left(2\gamma_k^2\|{H}_{\xi_k}(x^{k+0.5})-{H}_{\xi_k}(w^k)\|^2+(1-p_1)\|x^{k+1}-x^k\|^2-\frac{1}{2}\|x^{k+1}-x^{k+0.5}\|^2\right.\nonumber\\
&&\left.-(1-p_1)\|x^{k+0.5}-x^k\|^2+p_1\|x^{k+1}-w^k\|^2-p_1\|x^{k+0.5}-w^k\|^2\right)\nonumber
\end{eqnarray}
Further combining the above inequality with \eqref{VI-bd-0-m}, we get:
\begin{eqnarray}
&&\underbrace{\frac{1}{2}\left(\|x^{k+1}-x\|^2-(1-p_1)\|x^k-x\|^2-p_1\|w^k-x\|^2+\left(1-p_1\right)\|x^{k+0.5}-x^k\|^2\right)}_{:=d_k(x)}\nonumber\\
&&+\gamma_k\langle H(x^{k+0.5})+\tilde\nabla {g}(y^k),x^{k+0.5}-x\rangle\nonumber\\
&\le& \underbrace{\frac{1}{2}\left(2\gamma_k^2\|{H}_{\xi_k}(x^{k+0.5})-{H}_{\xi_k}(w^k)\|^2-p_1\|x^{k+0.5}-w^k\|^2-\frac{1}{2}\|x^{k+1}-x^{k+0.5}\|^2\right)}_{:=e_{k1}(x)}\nonumber\\
&&+\underbrace{\gamma_k\langle\hat {H}(x^{k+0.5})-H(x^{k+0.5}),x-x^{k+0.5}\rangle}_{:=e_{k2}(x)}\label{d_k-def}
\end{eqnarray}
Rearranging the terms with the monotonicity of $H(\cdot)$, we obtain:
\begin{eqnarray}
&&\gamma_k\langle H(x)+\tilde\nabla {g}(y^k),x^{k+0.5}-x\rangle\nonumber\\
&\le& \gamma_k\langle H(x^{k+0.5})+\tilde\nabla {g}(y^k),x^{k+0.5}-x\rangle\le-d_k(x)+e_{k1}(x)+e_{k2}(x),\label{VI-bd-last-m}
\end{eqnarray}
which completes the proof.

\subsection{Proof of Lemma~\ref{lem:savrep-m-g-bd}}
\label{proof:savrep-m-g-bd}
{
Using the {Lipschitz} continuity of $g(\cdot)$ and the definition of $v^{k+1}$ and $y^{k}$ in~\eqref{savrep-m-Update}:
\begin{eqnarray}
g(v^{k+1})&\le& g(y^k)+\langle \nabla g(y^k),v^{k+1}-y^k\rangle+\frac{L_g}{2}\|v^{k+1}-y^k\|^2\nonumber\\
&=& g(y^k)+\langle \nabla g(y^k),(1-\alpha_k-\beta_k)v^k+\alpha_k x^{k+0.5}+\beta_k \bar w^k-y^k\rangle+\frac{L_g\alpha_k^2}{2}\|x^{k+0.5}-x^k\|^2\nonumber\\
&=& (1-\alpha_k-\beta_k)\left(g(y^k)+\langle \nabla g(y^k),v^k-y^k\rangle\right)+\alpha_k \left(g(y^k)+\langle \nabla g(y^k),x^{k+0.5}-y^k\rangle\right)\nonumber\\
&&+\beta_k\left(g(y^k)+\langle \nabla g(y^k),\bar w^k-y^k\rangle\right)+\frac{L_g\alpha_k^2}{2}\|x^{k+0.5}-x^k\|^2.\nonumber
\end{eqnarray}
By the convexity of $g$ we have $g(y^k)+\langle\nabla g(y^k),v^k-y^k\rangle\le g(v^k)$ and $g(y^k)\le g(x)-\langle \nabla g(y^k),x-y^k\rangle$, then the above inequality can be further bounded by:
\begin{eqnarray}
g(v^{k+1})
&\le& (1-\alpha_k-\beta_k)g(v^k)+\alpha_k \left(g(x)+\langle \nabla g(y^k),x^{k+0.5}-x\rangle\right)\nonumber\\
&&+\beta_k\left(g(y^k)+\langle \nabla g(y^k),\bar w^k-y^k\rangle\right)+\frac{L_g\alpha_k^2}{2}\|x^{k+0.5}-x^k\|^2\nonumber\\
&=& (1-\alpha_k-\beta_k)g(v^k)+\beta_k\left(g(y^k)+\langle \nabla g(y^k),\bar w^k-y^k\rangle\right)+\frac{L_g\alpha_k^2}{2}\|x^{k+0.5}-x^k\|^2\nonumber\\
&&+\alpha_k \left(g(x)+\langle \tilde\nabla {g}(y^k),x^{k+0.5}-x\rangle+\langle \nabla g(y^k)-\tilde\nabla g(y^k),x^{k+0.5}-x\rangle\right)\label{g-ineq-1-m},
\end{eqnarray}
Now let us define:
\begin{eqnarray}
    e_{k3}(x)&:=& \langle \nabla g(y^k)-\tilde\nabla g(y^k),x^{k+0.5}-x\rangle - \frac{\beta_k}{\alpha_k}\left(g(\bar w^k)-g(y^k)-\langle\nabla g(y^k),\bar w^k-y^k\rangle\right)\nonumber\\
    &&-\frac{\alpha_kL_g}{\beta_k}\|x^{k+0.5}-x^k\|^2.\label{error-3-m}
\end{eqnarray}
Then by adding and subtracting $\alpha_ke_{k3}(x)$ on RHS,~\eqref{g-ineq-1-m} can be written as:
\begin{eqnarray}
    g(v^{k+1})&\le& (1-\alpha_k-\beta_k)g(v^k)+\beta_k\left(g(y^k)+\langle \nabla g(y^k),\bar w^k-y^k\rangle\right)+\frac{L_g\alpha_k^2}{2}\|x^{k+0.5}-x^k\|^2\nonumber\\
    &&+\alpha_k \left(g(x)+\langle \tilde\nabla {g}(y^k),x^{k+0.5}-x\rangle\right)\nonumber\\
    &&+\beta_k\left(g(\bar w^k)-g(y^k)-\langle\nabla g(y^k),\bar w^k-y^k\rangle\right)+\frac{\alpha_k^2L_g}{\beta_k}\|x^{k+0.5}-x^k\|^2+\alpha_k e_{k3}(x)\nonumber\\
    &=& (1-\alpha_k-\beta_k)g(v^k)+\alpha_k g(x)+\beta_k g(\bar w^k)+\alpha_k\langle\tilde \nabla g(y^k),x^{k+0.5}-x\rangle\nonumber\\
    &&+\left(\frac{\alpha_k^2L_g}{2}+\frac{\alpha_k^2L_g}{\beta_k}\right)\|x^{k+0.5}-x^k\|^2+\alpha_k e_3(x)\nonumber
\end{eqnarray}
Subtracting $g(x)$ from both sides we obtain:
\begin{eqnarray}
    g(v^{k+1})-g(x)&\le& (1-\alpha_k-\beta_k)\left(g(v^k)-g(x)\right)+\beta_k\left(g(\bar w^k)-g(x)\right)+\alpha_k\langle\tilde \nabla g(y^k),x^{k+0.5}-x\rangle\nonumber\\
    &&+\left(\frac{\alpha_k^2L_g}{2}+\frac{\alpha_k^2L_g}{\beta_k}\right)\|x^{k+0.5}-x^k\|^2+\alpha_k e_{k3}(x),\label{g-bd-last-m}
\end{eqnarray}
which completes the proof.
}

\subsection{Proof of Lemma~\ref{lem:savrep-m-inner-relation}}
\label{proof:savrep-m-inner-relation}

{
By the definition of the function $Q(x';x)$ in~\eqref{Q-function} and the iterate $v^{k+1}$ in~\eqref{savrep-m-Update}, we first obtain the following identity:
\begin{eqnarray*}
    && Q(v^{k+1};x) =\langle H(x),v^{k+1}-x\rangle+g(v^{k+1})-g(x)\nonumber\\
&=& (1-\alpha_k-\beta_k)\langle H(x),v^k-x\rangle+\alpha_k\langle H(x),x^{k+0.5}-x\rangle+\beta_k\langle H(x),\bar w^k-x\rangle
+g(v^{k+1})-g(x).
\end{eqnarray*}
Applying the bound for $g(v^{k+1})-g(x)$ in Lemma~\ref{lem:savrep-m-g-bd}, the above identity is bounded by
\begin{eqnarray*}
    &&(1-\alpha_k-\beta_k)\langle H(x),v^k-x\rangle+\alpha_k\langle H(x),x^{k+0.5}-x\rangle+\beta_k\langle H(x),\bar w^k-x\rangle
+g(v^{k+1})-g(x)\\
&\le& (1-\alpha_k-\beta_k)\langle H(x),v^k-x\rangle+\alpha_k\langle H(x),x^{k+0.5}-x\rangle+\beta_k\langle H(x),\bar w^k-x\rangle\nonumber\\
&&+ (1-\alpha_k-\beta_k)\left(g(v^k)-g(x)\right)+\beta_k \left(g(\bar w^k)-g(x)\right)\nonumber\\
&&+\alpha_k\langle\tilde \nabla g(y^k),x^{k+0.5}-x\rangle+{\left(\frac{\alpha_k^2L_g}{2}+\frac{\alpha_k^2L_g}{\beta_k}\right)}\|x^{k+0.5}-x^k\|^2+\alpha_k e_{k3}(x)\\
&=& (1-\alpha_k-\beta_k)\left(\langle H(x),v^k-x\rangle+g(v^k)-g(x)\right)+\beta_k\left(\langle H(x),\bar w^k-x\rangle+g(\bar w^k)-g(x)\right)\nonumber\\
&&+\alpha_k\left(\langle H(x)+\tilde\nabla g(y^k),x^{k+0.5}-x\rangle+\left(\frac{\alpha_k L_g}{2}+\frac{\alpha_k L_g}{\beta_k}\right)\|x^{k+0.5}-x^k\|^2+e_{k3}(x)\right),
\end{eqnarray*}
where in the equality the terms with same coefficients $(1-\alpha_k-\beta_k)$, $\beta_k$, $\alpha_k$, are combined. In view of the definition of $Q(x';x)$ in~\eqref{Q-function}, the terms associated with the coefficient $(1-\alpha_k-\beta_k)$ can be replaced with $Q(v^k;x)$, and the terms associated with the coefficient $\beta_k$ can be replaced with $Q(\bar w^k;x)$. In addition, the term $\langle H(x)+\tilde\nabla g(y^k),x^{k+0.5}-x\rangle$ can be bounded using the result from Lemma~\ref{lem:savrep-m-VI-bd}, which results in the following upper bounds:
\begin{eqnarray*}
    &&(1-\alpha_k-\beta_k)\left(\langle H(x),v^k-x\rangle+g(v^k)-g(x)\right)+\beta_k\left(\langle H(x),\bar w^k-x\rangle+g(\bar w^k)-g(x)\right)\nonumber\\
&&+\alpha_k\left(\langle H(x)+\tilde\nabla g(y^k),x^{k+0.5}-x\rangle+\left(\frac{\alpha_k L_g}{2}+\frac{\alpha_k L_g}{\beta_k}\right)\|x^{k+0.5}-x^k\|^2+e_{k3}(x)\right)\\
&\le& (1-\alpha_k-\beta_k)Q(v^k;x)+\beta_k Q(\bar w^k;x)\nonumber\\
&&+\alpha_k\left(\frac{1}{\gamma_k}(-d_k(x)+e_{k1}(x)+e_{k2}(x))+\left(\frac{\alpha_k L_g}{2}+\frac{\alpha_k L_g}{\beta_k}\right)\|x^{k+0.5}-x^k\|^2+e_{k3}(x)\right)\nonumber\nonumber\\
&=& (1-\alpha_k-\beta_k)Q(v^k;x)+\beta_k Q(\bar w^k;x)+\alpha_k\left(\frac{1}{\gamma_k}e_{k1}(x)+\frac{1}{\gamma_k}e_{k2}(x)+e_{k3}(x)\right)\nonumber\\
&&-\frac{\alpha_k}{2\gamma_k}\left(1-p_1-\alpha_k\gamma_k L_g-\frac{\alpha_k\gamma_k 2L_g}{\beta}\right)\|x^{k+0.5}-x^k\|^2\nonumber\\
&&+\frac{\alpha_k}{2\gamma_k}\left((1-p_1)\|x^k-x\|^2+p_1\|w^k-x\|^2-\|x^{k+1}-x\|^2\right),
\end{eqnarray*}
where in the equality we use the definition for $d_k(x)$ given in~\eqref{d_k-def}, combined with the rest of the terms. With the condition $1-p_1-\alpha_k\gamma_k L_g-\frac{2\alpha_k \gamma_k L_g}{\beta_k}\ge0$ specified in~\eqref{const-3}, the upper bound for $Q(v^{k+1};x)$ can be summarized as follows:
\begin{eqnarray*}
    Q(v^{k+1};x)&\le& (1-\alpha_k-\beta_k)Q(v^k;x)+\beta_k Q(\bar w^k;x)+\alpha_k\left(\frac{1}{\gamma_k}e_{k1}(x)+\frac{1}{\gamma_k}e_{k2}(x)+e_{k3}(x)\right)\nonumber\\
&&+\frac{\alpha_k}{2\gamma_k}\left((1-p_1)\|x^k-x\|^2+p_1\|w^k-x\|^2-\|x^{k+1}-x\|^2\right).
\end{eqnarray*}
Finally, in order to obtain the bound in the form of~\eqref{mu-equal-zero-bd}, we define
\begin{eqnarray}
    e_{k4}(x) := \|w^{k+1}-x\|^2-p_1\|x^{k+1}-x\|^2-(1-p_1)\|w^k-x\|^2\nonumber
\end{eqnarray}
and add $\frac{\alpha_k}{2\gamma_k}e_{k4}(x)$ on both sides of the above inequality. In particular, the term $\frac{\alpha_k}{2\gamma_k}e_{k4}(x)$ on the RHS is combined with the terms $e_{k1},e_{k2},e_{k3}$, while moving the term $-\frac{\alpha_k}{2\gamma_k}\|x^{k+1}-x\|^2$ from right to left and moving the term $-\frac{\alpha_k}{2\gamma_k}(1-p_1)\|w^{k}-x\|^2$ (due to $\frac{\alpha_k}{2\gamma_k}e_{k4}(x)$) from left to right. The final bound thus becomes
\begin{eqnarray}
    &&Q(v^{k+1};x)+\frac{\alpha_k}{2\gamma_k}\left(\|w^{k+1}-x\|^2+(1-p_1)\|x^{k+1}-x\|^2\right)\nonumber\\
    &\le& (1-\alpha_k-\beta_k)Q(v^k;x)+\beta_k Q(\bar w^k;x)+\frac{\alpha_k}{2\gamma_k}\left(\|w^k-x\|^2+(1-p_1)\|x^k-x\|^2\right)\nonumber\\
    &&+\frac{\alpha_k}{\gamma_k}\left(e_{k1}(x)+e_{k2}(x)+\gamma_ke_{k3}(x)+\frac{1}{2}e_{k4}(x)\right),\nonumber
\end{eqnarray}
which completes the proof.
}


}

{
\subsection{An Upper Bound for Expectation of Maximum}
In the proofs that follow, we will use the following lemma that provides an upper bound for the expectation of maximum:
\begin{lemma}\label{lem:exp-max}
    Let $\mathcal{F}=(\mathcal{F}_k)_{k\ge0}$ be a filtration and $(u^k)$ be stochastic process adapted to $\mathcal{F}$ with $\EE[u^{k+1}\,|\,\mathcal{F}_k]=0$. Then for any $K\in\mathbb{N}$ and $x^0\in\ZZ$ where $\ZZ$ is a compact set,
    \begin{eqnarray}
        \EE\left[\max\limits_{x\in\ZZ}\,\sum\limits_{k=0}^{K-1}\langle u^{k+1},x\rangle\right]\le\frac{c}{2}\max\limits_{x\in\ZZ}\,\|x-x^0\|^2+\frac{1}{2c}\sum\limits_{k=0}^{K-1}\EE\left[\|u^{k+1}\|^2\right]\nonumber
    \end{eqnarray}
    where $c>0$ is some arbitrary constant. 
\end{lemma}
Lemma~\ref{lem:exp-max} is also used in the analysis in work such as of~\cite{nemirovski2009robust, alacaoglu2022stochastic}, and the proof can be found in the appendix of~\cite{alacaoglu2022stochastic}, which we shall omit here.
}

{
\subsection{Technical Lemma for Establishing the Proof of Lemma~\ref{lem:error-upp-bd}}

In Lemma~\ref{lem:error-upp-bd}, we aim to provide a constant upper bound (depending on problem parameters) on the ``stochastic error'' term $\EE\left[\max\limits_{x\in\ZZ}\left\{\sum\limits_{s=0}^{S-1}\sum\limits_{k=sm_2}^{(s+1)m_2-1}\frac{\alpha_{sm_2}}{\Gamma_{s+1}\gamma_{sm_2}}\bar e_k(x)\right\}\right]$. Since the term $\bar e_k(x)$ consists of several terms (see Lemma~\ref{lem:savrep-m-inner-relation} and the references therein for detailed definitions), the proof of such a bound can be decomposed into establishing bounds for each of the components, where the proof for each one of them can be lengthy on its own. Therefore, we divide the proof for Lemma~\ref{lem:error-upp-bd} into several parts, where the bound for each of the consisting component is summarized in the next lemma. Furthermore, in order to relieve the notation burden, we define the following succinct notation and use them whenever they are applicable in later proofs. In particular, we define:
\begin{eqnarray}
    \sum\limits_{s,k}:=\sum\limits_{s=0}^{S-1}\sum\limits_{k=sm_2}^{(s+1)m_2-1},\quad\phi_s:=\frac{\alpha_{sm_2}}{\Gamma_{s+1}\gamma_{sm_2}},\quad\varphi_s:=\frac{\alpha_{sm_2}}{\Gamma_{s+1}}.\label{simple-notation-1}
\end{eqnarray}
The bound to be proven in Lemma~\ref{lem:error-upp-bd} can then be written as:
\begin{eqnarray}
    \EE\left[\max\limits_{x\in\ZZ}\left\{\sum\limits_{s,k}\phi_s\bar e_k(x)\right\}\right]\le \frac{1}{2}(S_2+S_3+S_4)\Omega_\mathcal{Z}^2.\label{error-upp-bd-succ}
\end{eqnarray}
The lemma below will be used to establish the bound~\eqref{error-upp-bd-succ}.

    

\begin{lemma}
    \label{lem:bd-ek}
    Following the same assumptions as in Lemma~\ref{lem:error-upp-bd}, the following inequalities holds:
    \begin{enumerate}
        \item 
        \begin{eqnarray}
        \EE\left[\max\limits_{x\in\ZZ}\sum\limits_{s,k}\phi_se_{k1}(x)\right]\le \EE\left[\sum\limits_{s,k}\frac{\phi_s}{2}\left(\left(2\gamma_{sm_2}^2L_h^2-p_1\right)\|x^{k+0.5}-w^k\|^2-\frac{1}{2}\|x^{k+1}-x^{k+0.5}\|^2\right)\right].\label{e_1-exp-err}
        \end{eqnarray}
        \item 
        \begin{eqnarray}
        \EE\left[\max\limits_{x\in\ZZ}\sum\limits_{s,k}\phi_se_{k2}(x)\right]\le \frac{S_2}{2}\Omega_Z^2+\sum\limits_{s,k}\frac{\phi_s}{2}\EE\left[\frac{\gamma_{sm_2}^2L_h^2}{4}\|x^{k+0.5}-w^k\|^2\right].\label{e2-bd-2}
        \end{eqnarray}

        \item 
    \begin{eqnarray}
        \EE\left[\max\limits_{x\in\ZZ}\sum\limits_{s,k}\varphi_se_{k3}(x)\right]\le \frac{S_3}{2}\Omega_\ZZ^2.\label{error-3-2}
    \end{eqnarray}
    \item 
    \begin{eqnarray}
        \EE\left[\max\limits_{x\in\ZZ}\sum\limits_{s,k}\frac{\phi_s}{2}e_{k4}(x)\right]\le \frac{S_4}{2}\Omega_\ZZ^2+\sum\limits_{s,k}\frac{\phi_s}{2}p_1(1-p_1)\EE\left[\frac{1}{2}\|x^{k+1}-x^{k+0.5}\|^2+\frac{1}{2}\|x^{k+0.5}-w^k\|^2\right].\label{error-4-3}
    \end{eqnarray}
    \end{enumerate}
    
\end{lemma}



In the follows we shall prove the four bounds in Lemma~\ref{lem:bd-ek} one by one. 
\begin{enumerate}
    \item To prove the first bound~\eqref{e_1-exp-err}, note that by definition of $e_{k1}(x)$ (see Lemma~\ref{lem:savrep-m-VI-bd}),
\[
e_{k1}(x)=\frac{1}{2}\left(2\gamma_k^2\|{H}_{\xi_k}(x^{k+0.5})-{H}_{\xi_k}(w^k)\|^2-p_1\|x^{k+0.5}-w^k\|^2-\frac{1}{2}\|x^{k+1}-x^{k+0.5}\|^2\right)
\]
is in fact a constant in terms of $x$. Furthermore, by applying the tower property, we have
\begin{eqnarray}
\EE\left[\|{H}_{\xi_k}(x^{k+0.5})-{H}_{\xi_k}(w^k)\|^2\right]=\EE\left[\EE_{k_1}\left[\|{H}_{\xi_k}(x^{k+0.5})-{H}_{\xi_k}(w^k)\|^2\right]\right]\le \EE\left[L_h^2\|x^{k+0.5}-w^k\|^2\right],\label{tower-lipschitz}
\end{eqnarray}
with $\EE_{k_1}[\cdot]:=\EE_{\xi_k}[\cdot|x^k,w^k]$ defined in~\eqref{expectations-1} and the stochastic oracle $H_{\xi_k}(\cdot)$ defined in~\eqref{sample-prob}. Therefore, we can conclude that
\begin{eqnarray}
    &&\EE\left[\max\limits_{x\in\ZZ}\sum\limits_{s,k}\phi_se_{k1}(x)\right]\nonumber\\
    &=&\EE\left[\sum\limits_{s,k}\frac{\phi_s}{2}\left(2\gamma_{sm_2}^2\|{H}_{\xi_k}(x^{k+0.5})-{H}_{\xi_k}(w^k)\|^2-p_1\|x^{k+0.5}-w^k\|^2-\frac{1}{2}\|x^{k+1}-x^{k+0.5}\|^2\right)\right]\nonumber\\
    &\le& \EE\left[\sum\limits_{s,k}\frac{\phi_s}{2}\left(\left(2\gamma_{sm_2}^2L_h^2-p_1\right)\|x^{k+0.5}-w^k\|^2-\frac{1}{2}\|x^{k+1}-x^{k+0.5}\|^2\right)\right],\nonumber
\end{eqnarray}
completing the proof for the first bound~\eqref{e_1-exp-err}.
    \item Now, let us consider the second bound~\eqref{e2-bd-2}. By directly plugging in the definition for $e_{k2}(x)$ (Lemma~\ref{lem:savrep-m-VI-bd}), we first obtain
\begin{eqnarray}
    \EE\left[\max\limits_{x\in\ZZ}\sum\limits_{s,k}\phi_se_{k2}(x)\right]= \EE\left[\max\limits_{x\in\ZZ}\left\{\sum\limits_{s,k}\phi_s\gamma_k\langle\hat {H}(x^{k+0.5})-H(x^{k+0.5}),x-x^{k+0.5}\rangle\right\}\right].\nonumber
\end{eqnarray}
We note that the inner product $\langle\hat {H}(x^{k+0.5})-H(x^{k+0.5}),x^{k+0.5}\rangle$ is independent of $x$, and by applying the tower property we obtain
\[
\EE\left[\langle\hat {H}(x^{k+0.5})-H(x^{k+0.5}),x^{k+0.5}\rangle\right]=\EE\left[\EE_{k_1}\left[\langle\hat {H}(x^{k+0.5})-H(x^{k+0.5}),x^{k+0.5}\rangle\right]\right]=0,
\]
since $x^{k+0.5}$ is deterministic with respect to $\EE_{k_1}[\cdot]$ and $\EE_{k_1}\left[\hat H(x^{k+0.5})\right]=H(x^{k+0.5})$ (see~\eqref{VR-grad-H} for definition). In addition, since for any fixed $s\ge0$, $\gamma_k$ remains constant throughout the iterations $k=sm_2,...,(s+1)m_2-1$, the coefficient can be combined as
\[
\phi_s\gamma_k=\frac{\alpha_{sm_2}}{\Gamma_{s+1}\gamma_{sm_2}}\gamma_k=\frac{\alpha_{sm_2}}{\Gamma_{s+1}}=\varphi_s.
\]
In view of the above observations, we obtain
\begin{eqnarray}
    &&\EE\left[\max\limits_{x\in\ZZ}\sum\limits_{s,k}\phi_se_{k2}(x)\right]=\EE\left[\max\limits_{x\in\ZZ}\left\{\sum\limits_{s,k}\varphi_s\langle\hat {H}(x^{k+0.5})-H(x^{k+0.5}),x\rangle\right\}\right]\nonumber\\
    &\le& \frac{S_2}{2}\max\limits_{x\in\ZZ}\|x-x^0\|^2+\frac{1}{2S_2}\sum\limits_{s,k}\EE\left[\left\|\varphi_s\left(\hat {H}(x^{k+0.5})-H(x^{k+0.5})\right)\right\|^2\right],\label{bd-e2-1.1}
\end{eqnarray}
where we apply Lemma~\ref{lem:exp-max} in the inequality and define $S_2 = \frac{4\alpha_{(S-1)m_2}}{\Gamma_{S}\gamma_{(S-1)m_2}}$. To complete the bound in the final form in~\eqref{e2-bd-2}, we upper bound the second term above by first applying the definition of $\hat H
(x^{k+0.5})$:
\begin{eqnarray}
    &&\EE\left[\left\|\hat {H}(x^{k+0.5})-H(x^{k+0.5})\right\|^2\right]=\EE\left[\left\|{H}(w^k)+{H}_{\xi_k}(x^{k+0.5})-{H}_{\xi_k}(w^k)-H(x^{k+0.5})\right\|^2\right]\nonumber\\
    &\le& \EE\left[\|{H}_{\xi_k}(x^{k+0.5})-{H}_{\xi_k}(w^k)\|^2\right]\le \EE\left[L_h^2\|x^{k+0.5}-w^k\|^2\right]\label{tower-lipschitz-2},
\end{eqnarray}
where in the last inequality we apply the tower property same as in~\eqref{tower-lipschitz}. We then simplify the coefficients in the second term of~\eqref{bd-e2-1.1} as follows:
\begin{eqnarray*}
    \frac{\varphi_s^2L_h^2}{2S_2}\le \frac{\varphi_s^2L_h^2}{2}\cdot \frac{1}{4\phi_s}=\frac{\phi_s^2}{2}\cdot \frac{1}{4\phi_s}\cdot \gamma_{sm_2}^2L_h^2=\frac{\phi_s}{2}\cdot\frac{\gamma_{sm_2}^2L_h^2}{4}.
\end{eqnarray*}
Note that the inequality is due to the condition~\eqref{paramter-conditions}, which requires
\[
\phi_{s-1}\le\phi_s\le \phi_{S-1}=\frac{S_2}{4},\quad s=1,...,S-1.
\]
Combining the above bounds, together with Assumption~\ref{ass:monotone-const-bd} that bounds the compact set $\mathcal{Z}$ with $\Omega_\mathcal{Z}$, the next inequality follows from~\eqref{bd-e2-1.1}:
\begin{eqnarray}
    \EE\left[\max\limits_{x\in\ZZ}\sum\limits_{s,k}\phi_se_{k2}(x)\right]\le \frac{S_2}{2}\Omega_\mathcal{Z}^2+\sum\limits_{s,k}\frac{\phi_s}{2}\cdot\frac{\gamma_{sm_2}^2L_h^2}{4}\EE\left[\left\|x^{k+0.5}-w^k\right\|^2\right],\nonumber
\end{eqnarray}
completing the proof for~\eqref{e2-bd-2}.
    \item Now let us consider the third bound~\eqref{error-3-2}. Applying the definition (Lemma~\ref{lem:savrep-m-g-bd}), we have
\begin{eqnarray}
    &&\EE\left[\max\limits_{x\in\ZZ}\sum\limits_{s,k}\varphi_se_{k3}(x)\right]=\EE\left[\max\limits_{x\in\ZZ}\left\{\sum\limits_{s,k}\varphi_s\langle \nabla g(y^k)-\tilde\nabla g(y^k),x^{k+0.5}-x\rangle\right\}\right]\nonumber\\
    &&-\EE\left[\sum\limits_{s,k}\varphi_s \frac{\beta_{sm_2}}{\alpha_{sm_2}}\left(g(\bar w^k)-g(y^k)-\langle\nabla g(y^k),\bar w^k-y^k\rangle\right)\right]\nonumber\\
    &&-\EE\left[\sum\limits_{s,k}\varphi_s \frac{\alpha_{sm_2}L_g}{\beta_{sm_2}}\|x^{k+0.5}-x^k\|^2\right].\label{bd-e3-1.0}
\end{eqnarray}
We shall focus on establishing an upper bound for the first term, which eventually cancels out the rest of the two terms. We first split the first term in the following fashion:
\begin{eqnarray}
    &&\langle \nabla g(y^k)-\tilde\nabla g(y^k),x^{k+0.5}-x\rangle=\langle \nabla g(y^k)-\tilde\nabla g(y^k),x^{k+0.5}-x^k\rangle\nonumber\\
    &&+\langle \nabla g(y^k)-\tilde\nabla g(y^k),x^{k}\rangle+\langle \nabla g(y^k)-\tilde\nabla g(y^k),-x\rangle.\label{bd-e3-1.2}
\end{eqnarray}
In particular, the term $\langle \nabla g(y^k)-\tilde\nabla g(y^k),x^{k}\rangle$ is independent of $x$ and has expectation 0 due to the identity:
\begin{eqnarray}
    \EE\left[\langle \nabla g(y^k)-\tilde\nabla g(y^k),x^k\rangle\right]=\EE\left[\EE_{k_2}\left[\langle \nabla g(y^k)-\tilde\nabla g(y^k),x^k\rangle\right]\right]=0,\label{bd-e3-1.3}
\end{eqnarray}
since $x^k$ and $y^k$ are deterministic with respect to $\EE_{k_2}[\cdot]$, $\EE_{k2}\left[\tilde\nabla g(y^k)\right]=\nabla g(y^k)$ with $\EE_{k_2}[\cdot]:=\EE_{\zeta_k}[\cdot|x^k,\bar w^k,v^k]$ as defined in~\eqref{expectations-1}, and $\tilde\nabla g(y^k)$ is defined in~\eqref{VR-grad-g}. The term $\langle \nabla g(y^k)-\tilde\nabla g(y^k),x^{k+0.5}-x^k\rangle$ is bounded via Young's inequality:
\begin{eqnarray}
    \langle \nabla g(y^k)-\tilde\nabla g(y^k),x^{k+0.5}-x^k\rangle\le \frac{\beta_{sm_2}}{4\alpha_{sm_2}L_g}\|\nabla g(y^k)-\tilde\nabla g(y^k)\|^2+\frac{\alpha_{sm_2} L_g}{\beta_{sm_2}}\|x^{k+0.5}-x^k\|^2.\label{bd-e3-1.4}
\end{eqnarray}
Finally, we apply Lemma~\ref{lem:exp-max} to obtain
\begin{eqnarray}
    &&\EE\left[\max\limits_{x\in\ZZ}\left\{\sum\limits_{s,k}\varphi_s\langle \nabla g(y^k)-\tilde\nabla g(y^k),-x\rangle\right\}\right]\nonumber\\
    &\le & \frac{S_3}{2}\max\limits_{x\in\ZZ}\|x-x^0\|^2+\frac{1}{2S_3}\sum\limits_{s,k}\varphi_s^2\EE\left[\left\|\nabla g(y^k)-\tilde\nabla g(y^k)\right\|^2\right]\label{bd-e3-1.1},
\end{eqnarray}
where $S_3$ is defined as follows with condition~\eqref{para-condition-2} satisfied:
\begin{eqnarray}
    S_3=\frac{2\alpha_{(S-1)m_2}L_g}{\beta_{(S-1)m_2}}\cdot S
    ,\quad S_3\ge \frac{2\alpha^2_{sm_2}L_g}{\Gamma_{s+1}\beta_{sm_2}},\quad s=0,...,S-1.\label{condition-S3}
\end{eqnarray}
The coefficient in~\eqref{bd-e3-1.1} then can be simplified as
\begin{eqnarray}
    \frac{\varphi_s^2}{2S_3}=\frac{1}{2S_3}\left(\frac{\alpha_{sm_2}}{\Gamma_{s+1}}\right)^2= \left(\frac{\alpha_{sm_2}}{\Gamma_{s+1}}\right)\left(\frac{\alpha_{sm_2}}{2\Gamma_{s+1}S_3}\right)\le  \left(\frac{\alpha_{sm_2}}{\Gamma_{s+1}}\right)\left(\frac{\beta_{sm_2}}{4\alpha_{sm_2}L_g}\right)=\varphi_s\left(\frac{\beta_{sm_2}}{4\alpha_{sm_2}L_g}\right).\label{bd-e3-1.5}
\end{eqnarray}
Combining the bounds from~\eqref{bd-e3-1.2}-~\eqref{bd-e3-1.5}, we establish the next bound:
\begin{eqnarray*}
    &&\EE\left[\max\limits_{x\in\ZZ}\left\{\sum\limits_{s,k}\varphi_s\langle \nabla g(y^k)-\tilde\nabla g(y^k),x^{k+0.5}-x\rangle\right\}\right]\le\frac{S_3}{2}\max\limits_{x\in\ZZ}\|x-x^0\|^2\nonumber\\
    &&+\sum\limits_{s,k}\varphi_s\left(\frac{\beta_{sm_2}}{2\alpha_{sm_2}L_g}\right)\EE\left[\left\|\nabla g(y^k)-\tilde\nabla g(y^k)\right\|^2\right]+\sum\limits_{s,k}\varphi_s\frac{\alpha_{sm_2} L_g}{\beta_{sm_2}}\EE\left[\|x^{k+0.5}-x^k\|^2\right].\nonumber
\end{eqnarray*}
Note that the term $\EE\left[\left\|\nabla g(y^k)-\tilde\nabla g(y^k)\right\|^2\right]$ can be further bounded using the following inequality:
\begin{eqnarray}
&&\EE_{k_2}\left[\|\nabla g(y^k)-\tilde\nabla g(y^k)\|^2\right]= \EE_{k_2}\left[\|\nabla g_{\zeta_k}(\bar w^k)-\nabla g_{\zeta_k}(y^k)-\left(\nabla g(\bar w^k)-\nabla g(y^k)\right)\|^2\right]\nonumber\\
&\le& \EE_{k_2}\left[\|\nabla g_{\zeta_k}(\bar w^k)-\nabla g_{\zeta_k}(y^k)\|^2\right]= \sum\limits_{i=1}^{m_2}\frac{1}{\pi_i}\|\nabla g_{i}(\bar w^k)-\nabla g_{i}(y^k)\|^2\nonumber\\
&\leq & \sum\limits_{i=1}^{m_2}\frac{2L_{g(i)}}{\pi_i}\left(g_{i}(\bar w^k)-g_{i}(y^k)-\langle\nabla g_i(y^k),\bar w^k-y^k\rangle\right)\nonumber\\
&=&  2L_g\sum\limits_{i=1}^{m_2}\left(g_{i}(\bar w^k)-g_{i}(y^k)-\langle\nabla g_i(y^k),\bar w^k-y^k\rangle\right)\nonumber\\
&=& 2L_g\left(g(\bar w^k)-g(y^k)-\langle\nabla g(y^k),\bar w^k-y^k\rangle\right),\label{g-gradient-est-bd}
\end{eqnarray}
where the second inequality is from Theorem 2.1.5 in \cite{nesterov2003introductory}; $\pi_i$ is the sample probability given in~\eqref{sample-prob}, and $L_{g(i)}$ is the Lipscthiz-smooth constant for $g_i$ and $L_g:=\sum\limits_{i=1}^{m_2}L_{g(i)}$. Therefore, we obtain the next inequality:
\begin{eqnarray}
    &&\EE\left[\max\limits_{x\in\ZZ}\left\{\sum\limits_{s,k}\varphi_s\langle \nabla g(y^k)-\tilde\nabla g(y^k),x^{k+0.5}-x\rangle\right\}\right]\le\frac{S_3}{2}\max\limits_{x\in\ZZ}\|x-x^0\|^2\nonumber\\
    &&+\sum\limits_{s,k}\varphi_s\left(\frac{\beta_{sm_2}}{\alpha_{sm_2}}\right)\EE\left[g(\bar w^k)-g(y^k)-\langle\nabla g(y^k),\bar w^k-y^k\rangle\right]+\sum\limits_{s,k}\varphi_s\frac{\alpha_{sm_2} L_g}{\beta_{sm_2}}\EE\left[\|x^{k+0.5}-x^k\|^2\right].\nonumber
\end{eqnarray}
The above inequality combined with~\eqref{bd-e3-1.0} results in the desirable bound in~\eqref{error-3-2}:
\begin{eqnarray*}
    \EE\left[\max\limits_{x\in\ZZ}\sum\limits_{s,k}\varphi_se_{k3}(x)\right]\le \frac{S_3}{2}\max\limits_{x\in\ZZ}\|x-x^0\|^2.\nonumber
\end{eqnarray*}
    \item To prove the last bound in~\eqref{error-4-3}, we first note that by definition (Lemma~\ref{lem:savrep-m-inner-relation}),
\begin{eqnarray}
    e_{k4}(x)&=&\|w^{k+1}-x\|^2-p_1\|x^{k+1}-x\|^2-(1-p_1)\|w^k-x\|^2\nonumber\\
    &=& \|w^{k+1}\|^2-p_1\|x^{k+1}\|^2-(1-p_1)\|w^k\|^2-2\langle w^{k+1}-p_1x^{k+1}-(1-p_1)w^k,x\rangle.\nonumber
\end{eqnarray}
Since $\EE_{k_1+}[\|w^{k+1}\|^2]=p_1\|x^{k+1}\|^2+(1-p_1)\|w^k\|^2$ with $\EE_{k_1+}[\cdot]:=\EE[\cdot|w^k,x^{k+1}]$ as defined in~\eqref{expectations-2}, we have
\begin{eqnarray}
    \EE\left[\|w^{k+1}\|^2-p_1\|x^{k+1}\|^2-(1-p_1)\|w^k\|^2\right]=0.\nonumber
\end{eqnarray}
Therefore,
\begin{eqnarray}
    &&\EE\left[\max\limits_{x\in\ZZ}\left\{\sum\limits_{s,k}\frac{\phi_s}{2}e_{k4}(x)\right\}\right]= \EE\left[\max\limits_{x\in\ZZ}\left\{\sum\limits_{s,k}\phi_s\langle -(w^{k+1}-p_1x^{k+1}-(1-p_1)w^k),x\rangle\right\}\right]\nonumber\\
    &\le& \frac{S_4}{2}\max\limits_{x\in\ZZ}\|x-x^0\|^2+\frac{1}{2S_4}\sum\limits_{s,k}\phi_s^2\EE\left[\|w^{k+1}-p_1x^{k+1}-(1-p_1)w^k\|^2\right],\label{bd-e4-1.1}
\end{eqnarray}
where we applied Lemma~\ref{lem:exp-max} to derive the upper bound above, and we shall specify the constant $S_4$ later. To establish a further upper bound, we first note the following identity:
\begin{eqnarray}
    &&\EE\left[\|w^{k+1}-p_1x^{k+1}-(1-p_1)w^k\|^2\right]=\EE\left[\EE_{k_1+}\left[\|w^{k+1}-p_1x^{k+1}-(1-p_1)w^k\|^2\right]\right]\nonumber\\
    &=& \EE\left[\EE_{k_1+}\left[\|w^{k+1}-\EE_{k_1+}[w^{k+1}]\|^2\right]\right]=\EE\left[\EE_{k_1+}\left[\|w^{k+1}\|^2\right]-\left\|\EE_{k_1+}[w^{k+1}]\right\|^2\right]\nonumber\\
    &=& \EE\left[p_1\|x^{k+1}\|^2+(1-p_1)\|w^k\|^2-\|p_1x^{k+1}+(1-p_1)w^k\|^2\right]=p_1(1-p_1)\EE\left[\|x^{k+1}-w^k\|^2\right],\nonumber
\end{eqnarray}
followed by applying Young's inequality to obtain:
\begin{eqnarray}
    &&\EE\left[\|w^{k+1}-p_1x^{k+1}-(1-p_1)w^k\|^2\right]=p_1(1-p_1)\EE\left[\|x^{k+1}-w^k\|^2\right]\nonumber\\
    &\le& 2p_1(1-p_1)\EE\left[\|x^{k+1}-x^{k+0.5}\|^2+\|x^{k+0.5}-w^k\|^2\right].\nonumber
\end{eqnarray}
Now let us simplify the coefficients by substituting the choice for $S_4=\frac{4\alpha_{(S-1)m_2}}{\Gamma_{S}\gamma_{(S-1)m_2}}$, and we note that by condition~\eqref{paramter-conditions} we have $S_4\ge 4\phi_s$ for $s=0,...,S-1$, which results in the bound:
\begin{eqnarray}
    \frac{\phi_s^2}{2S_4}\le \frac{\phi_s}{8}.\nonumber
\end{eqnarray}
Combining the above results, the bound in~\eqref{bd-e4-1.1} becomes
\begin{eqnarray}
    \EE\left[\max\limits_{x\in\ZZ}\sum\limits_{s,k}\frac{\phi_s}{2}e_{k4}(x)\right]\le \frac{S_4}{2}\Omega_\ZZ^2+\sum\limits_{s,k}\frac{\phi_s}{2}p_1(1-p_1)\EE\left[\frac{1}{2}\|x^{k+1}-x^{k+0.5}\|^2+\frac{1}{2}\|x^{k+0.5}-w^k\|^2\right],\nonumber
\end{eqnarray}
which is exactly the desired bound.
\end{enumerate}

}

{
\subsection{Proof of Lemma~\ref{lem:error-upp-bd}}
\label{proof:error-upp-bd}




The bound to be proven in Lemma~\ref{lem:error-upp-bd} with simplified notation (see also the definition in~\eqref{simple-notation-1}) is:
\begin{eqnarray}
    \EE\left[\max\limits_{x\in\ZZ}\left\{\sum\limits_{s,k}\phi_s\bar e_k(x)\right\}\right]\le \frac{1}{2}(S_2+S_3+S_4)\Omega_\mathcal{Z}^2.\nonumber
\end{eqnarray}
Now since $\bar e_k(x):=e_{k1}(x)+e_{k2}(x)+\gamma_ke_{k3}(x)+\frac{1}{2}e_{k4}(x)$ with definition of each term given in Lemma~\ref{lem:savrep-m-VI-bd}, ~\ref{lem:savrep-m-g-bd}, and ~\ref{lem:savrep-m-inner-relation}, we can combine the conclusions from Lemma~\ref{lem:bd-ek} to derive an overall upper bound. First of all, the next inequality is immediate:
\begin{eqnarray}
     \EE\left[\max\limits_{x\in\ZZ}\left\{\sum\limits_{s,k}\phi_s\bar e_k(x)\right\}\right]&\le&  \EE\left[\max\limits_{x\in\ZZ}\left\{\sum\limits_{s,k}\phi_s e_{k1}(x)\right\}\right]+\EE\left[\max\limits_{x\in\ZZ}\left\{\sum\limits_{s,k}\phi_s e_{k2}(x)\right\}\right]\nonumber\\
     &&+\EE\left[\max\limits_{x\in\ZZ}\left\{\sum\limits_{s,k}\varphi_s e_{k3}(x)\right\}\right]+\EE\left[\max\limits_{x\in\ZZ}\left\{\sum\limits_{s,k}\frac{\phi_s}{2} e_{k4}(x)\right\}\right].\nonumber
\end{eqnarray}
Here we simply sum up all the bounds established in Lemma~\ref{lem:bd-ek}. In particular, we arrange the terms and the sum of these bounds results in
\begin{eqnarray}
    &&\frac{1}{2}\left(S_2+S_3+S_4\right)\Omega_\ZZ^2+\sum\limits_{s,k}\frac{\phi_s}{2}\EE\left[\frac{1}{2}(p_1(1-p_1)-1)\|x^{k+1}-x^{k+0.5}\|^2\right]\nonumber\\
     &&+\sum\limits_{s,k}\frac{\phi_s}{2}\EE\left[\left(\frac{9}{4}\gamma_{sm_2}^2L_h^2-\frac{p_1}{2}-\frac{p_1^2}{2}\right)\|x^{k+0.5}-w^k\|^2\right].\nonumber
\end{eqnarray}
In view of condition~\eqref{para-condition-2} and the fact that $p_1\in[0,1]$, the last two terms are non-positive, concluding the desired bound
\begin{eqnarray*}
    \EE\left[\max\limits_{x\in\ZZ}\left\{\sum\limits_{s,k}\phi_s\bar e_k(x)\right\}\right]\le\frac{1}{2}\left(S_2+S_3+S_4\right)\Omega_\ZZ^2.
\end{eqnarray*}

}

\subsection{Proof of Lemma \ref{lem:VI-relation-1}}
\label{proof:VI-relation-1}
The proof for Lemma~\ref{lem:VI-relation-1} is similar to {the proof given in Appendix~\ref{proof:savrep-m-VI-bd}} for Lemma~\ref{lem:savrep-m-VI-bd}, except that the parameter involved is constant, i.e.\,$\gamma_k=\gamma$ for $k=0,1,...$. We shall directly start from~\eqref{VI-bd-last-m} and apply strong monotonicity instead of mere monotone {(see definitions for each term in~\eqref{d_k-def})}:
\begin{eqnarray}
&&\gamma\langle H(x)+\tilde\nabla {g}(y^k),x^{k+0.5}-x\rangle+\gamma\mu\|x-x^{k+0.5}\|^2\nonumber\\
&\le& \gamma\langle H(x^{k+0.5})+\tilde\nabla {g}(y^k),x^{k+0.5}-x\rangle\le-d_k(x)+e_{k1}(x)+e_{k2}(x),\nonumber
\end{eqnarray}
Taking conditional expectation {$\EE_{k_1}[\cdot]:=\EE_{\xi_k}[\cdot|x^k,w^k]$} on both sides gives:
\begin{eqnarray}
    &&\EE_{k_1}\left[\gamma\langle H(x)+\tilde\nabla {g}(y^k),x^{k+0.5}-x\rangle\right]+\EE_{k_1}\left[\gamma\mu\|x-x^{k+0.5}\|^2\right]\nonumber\\
    &\le& \EE_{k_1}[-d_k(x)+e_{k1}(x)+e_{k2}(x)]\le \EE_{k_1}\left[-d_k(x)+\frac{1}{2}\left(2\gamma^2L_h^2-p_1\right)\|x^{k+0.5}-w^k\|^2\right],\label{app-lem-3.2-1}
\end{eqnarray}
where the last inequality is due to the identity
\begin{eqnarray}
\EE_{k_1}[e_{k2}(x)]=\EE_{k_1}\left[\gamma\langle\hat H(x^{k+0.5})-H(x^{k+0.5}),x-x^{k+0.5}\rangle\right]= \gamma\langle H(x^{k+0.5})-H(x^{k+0.5}),x-x^{k+0.5}\rangle=0,\nonumber
\end{eqnarray}
{since $x^{k+0.5}$ is deterministic with respect to $\EE_{k_1}[\cdot]$ and $\EE_{k_1}\left[\hat H(x^{k+0.5})\right]=H(x^{k+0.5})$}, and
\begin{eqnarray}
    &&\EE_{k_1}[e_{k1}(x)]{=\EE_{k_1}\left[\frac{1}{2}\left(2\gamma^2\|{H}_{\xi_k}(x^{k+0.5})-{H}_{\xi_k}(w^k)\|^2-p_1\|x^{k+0.5}-w^k\|^2-\frac{1}{2}\|x^{k+1}-x^{k+0.5}\|^2\right)\right]}\nonumber\\
    &\le& \EE_{k_1}\left[\frac{1}{2}\left(2\gamma^2\|{H}_{\xi_k}(x^{k+0.5})-{H}_{\xi_k}(w^k)\|^2-p_1\|x^{k+0.5}-w^k\|^2\right)\right]\nonumber\\
    &\le& \EE_{k_1}\left[\frac{1}{2}\left(2\gamma^2L_h^2-p_1\right)\|x^{k+0.5}-w^k\|^2\right],\nonumber
\end{eqnarray}
{since $\EE_{k_1}\left[\|H_{\xi_k}(x^{k+0.5})-H_{\xi_k}(w^k)\|^2\right]\le L_h^2\|x^{k+0.5}-w^k\|^2$ (see also~\eqref{tower-lipschitz}). 



We obtain the next inequality by
rearranging the terms in~\eqref{app-lem-3.2-1}:
\begin{eqnarray}
&&\EE_{k_1}\left[\gamma\langle H(x)+\tilde\nabla {g}(y^k),x^{k+0.5}-x\rangle\right]\nonumber\\
&\le& -\EE_{k_1}[d_k(x)]-\EE_{k_1}\left[\frac{1}{2}(p_1-2\gamma^2L_h^2)\|x^{k+0.5}-w^k\|^2+\gamma{\mu_h}\|x^{k+0.5}-x\|^2\right].\label{app-lem-3.2-2}
\end{eqnarray}
Finally, applying Young's inequality to the term $\|x^{k+0.5}-x\|^2$ and the definition of $d_k(x)$ gives:
\begin{eqnarray}
&&-\EE_{k_1}[d_k(x)]-\EE_{k_1}\left[\frac{1}{2}(p_1-2\gamma^2L_h^2)\|x^{k+0.5}-w^k\|^2+\gamma{\mu_h}\|x^{k+0.5}-x\|^2\right]\nonumber\\
&\le& -\EE_{k_1}[d_k(x)]-\EE_{k_1}\left[\frac{1}{2}(p_1-2\gamma^2L_h^2)\|x^{k+0.5}-w^k\|^2+\frac{1}{2}\gamma{\mu_h}\|x^{k}-x\|^2-{}\gamma\mu_h\|x^{k+0.5}-x^k\|^2\right]\nonumber\\
&=& \frac{1}{2}\EE_{k_1}\left[(1-p_1-{}\gamma\mu_h)\|x^k-x\|^2+p_1\|w^k-x\|^2-\|x^{k+1}-x\|^2\right]\nonumber\\
&&-\frac{1}{2}\left(p_1-2\gamma^2L_h^2\right)\EE_{k_1}\left[\|x^{k+0.5}-w^k\|^2\right]-\frac{1}{2}\left(1-p_1-2\gamma\mu_h\right)\EE_{k_1}\left[\|x^{k+0.5}-x^k\|^2\right].\nonumber\label{strong-telescope}
\end{eqnarray}
Combining with~\eqref{app-lem-3.2-2} completes the proof. }

\subsection{Proof of Lemma \ref{lem:function-relation-1}}
\label{proof:function-relation-1}
The proof for Lemma~\ref{lem:function-relation-1} is again similar to the proof in {Appendix~\ref{proof:savrep-m-g-bd}} for Lemma~\ref{lem:savrep-m-g-bd} with $\alpha_k=\alpha$ and $\beta_k=\beta$ for all $k$. We may directly start from~\eqref{g-bd-last-m} and take conditional expectation {$\EE_{k_2}[\cdot]:=\EE_{\zeta_k}[\cdot|x^k,\bar w^k,v^k]$} on both sides {(refer to~\eqref{VR-grad-g} for the definition of $\tilde\nabla g(y^k)$)}:
\begin{eqnarray}
&&\EE_{k_2}\left[g(v^{k+1})-g(x)\right]\nonumber\\
&\le& \EE_{k_2}\left[(1-\alpha-\beta)\left(g(v^k)-g(x)\right)+\beta \left(g(\bar w^k)-g(x)\right)\right]\nonumber\\
&&+\EE_{k_2}\left[\alpha\langle\tilde\nabla g(y^k),x^{k+0.5}-x\rangle\right]+{\left(\frac{\alpha^2L_g}{2}+\frac{\alpha^2L_g}{\beta}
\right)}\EE_{k_2}\left[\|x^{k+0.5}-x^k\|^2\right]+\EE_{k_2}[e_{k3}(x)]\nonumber\\
&\le& \EE_{k_2}\left[(1-\alpha-\beta)\left(g(v^k)-g(x)\right)+\beta \left(g(\bar w^k)-g(x)\right)\right]\nonumber\\
&&+\EE_{k_2}\left[\alpha\langle\tilde\nabla g(y^k),x^{k+0.5}-x\rangle\right]+{\left(\frac{\alpha^2L_g}{2}+\frac{\alpha^2L_g}{2\beta}
\right)}\EE_{k_2}\left[\|x^{k+0.5}-x^k\|^2\right],\nonumber
\end{eqnarray}
{where we use the result $\EE_{k_2}[e_{k3}(x)]\le0$ in the last inequality. This completes the proof for Lemma~\ref{lem:function-relation-1}. 

To see why $\EE_{k_2}[e_{k3}(x)]\le0$, we apply the definition of $e_{k3}(x)$ in~\eqref{error-3-m} and the fact that $\EE_{k_2}\left[\tilde \nabla g(y^k)\right]=\nabla g(y^k)$ to obtain
\begin{eqnarray}
    \EE_{k_2}[e_{k3}(x)]&=& \EE_{k_2}\left[\langle \nabla g(y^k)-\tilde\nabla g(y^k),x^{k+0.5}-x^k\rangle\right]+\EE_{k_2}\left[\langle \nabla g(y^k)-\tilde\nabla g(y^k),x^{k}-x\rangle\right]\nonumber\\
    &&+\EE_{k_2}\left[ -\frac{\beta}{\alpha}\left(g(\bar w^k)-g(y^k)-\langle\nabla g(y^k),\bar w^k-y^k\rangle\right)-\frac{\alpha L_g}{\beta}\|x^{k+0.5}-x^k\|^2\right]\nonumber\\
    &=& \EE_{k_2}\left[\langle \nabla g(y^k)-\tilde\nabla g(y^k),x^{k+0.5}-x^k\rangle\right]\nonumber\\
    &&+\EE_{k_2}\left[ -\frac{\beta}{\alpha}\left(g(\bar w^k)-g(y^k)-\langle\nabla g(y^k),\bar w^k-y^k\rangle\right)-\frac{\alpha L_g}{\beta}\|x^{k+0.5}-x^k\|^2\right]\nonumber\\
    &\le& \EE_{k_2}\left[\frac{\beta}{2\alpha L_g}\|\nabla g( y^k)-\tilde\nabla g(y^k)\|^2+\frac{\alpha L_g}{2\beta}\|x^{k+0.5}-x^k\|^2\right]\nonumber\\
    &&+\EE_{k_2}\left[ -\frac{\beta}{\alpha}\left(g(\bar w^k)-g(y^k)-\langle\nabla g(y^k),\bar w^k-y^k\rangle\right)-\frac{\alpha L_g}{\beta}\|x^{k+0.5}-x^k\|^2\right]\nonumber\\
    &=& \EE_{k_2}\left[\frac{\beta}{2\alpha L_g}\|\nabla g( y^k)-\tilde\nabla g(y^k)\|^2-\frac{\beta}{\alpha}\left(g(\bar w^k)-g(y^k)-\langle\nabla g(y^k),\bar w^k-y^k\rangle\right)\right]\nonumber\\
    &&+\EE_{k_2}\left[-\frac{\alpha L_g}{2\beta}\|x^{k+0.5}-x^k\|^2\right].\label{app:proof-lem-3.3-1}
\end{eqnarray}
where in the inequality we apply Young's inequality. Finally, by~\eqref{g-gradient-est-bd}, the first two terms in~\eqref{app:proof-lem-3.3-1} combine to be non-positive, which proves the argument that $\EE_{k_2}[e_{k3}(x)]\le0$.
}

\subsection{Proof of Theorem \ref{thm:per-iter-conv-1}}
\label{proof:per-iter-conv-1}

{
By the definition of the function $Q(x';x)$ in~\eqref{Q-function} and the iterate $v^{k+1}$ in~\eqref{VR-VI-Opt-Strong-Update}, we first obtain the following identity:
\begin{eqnarray*}
    && \EE\left[Q(v^{k+1};x)\right] =\EE\left[\langle H(x),v^{k+1}-x\rangle+g(v^{k+1})-g(x)\right]\nonumber\\
    &=& \EE\left[(1-\alpha-\beta)\langle H(x),v^k-x\rangle+\alpha\langle H(x),x^{k+0.5}-x\rangle+\beta\langle H(x),\bar w^k-x\rangle\right]+\EE\left[g(v^{k+1})-g(x)\right]\nonumber.
\end{eqnarray*}
Noting the tower property $\EE\left[g(v^{k+1})-g(x)\right]=\EE\left[\EE_{k2}\left[g(v^{k+1})-g(x)\right]\right]$, we apply Lemma~\ref{lem:function-relation-1} to bound the last term in the above equation and obtain:
\begin{eqnarray*}
    &&\EE\left[(1-\alpha-\beta)\langle H(x),v^k-x\rangle+\alpha\langle H(x),x^{k+0.5}-x\rangle+\beta\langle H(x),\bar w^k-x\rangle\right]+\EE\left[g(v^{k+1})-g(x)\nonumber\right]\\
    &\le& \EE\left[(1-\alpha-\beta)\langle H(x),v^k-x\rangle+\alpha\langle H(x),x^{k+0.5}-x\rangle+\beta\langle H(x),\bar w^k-x\rangle\right]\nonumber\\
    &&+ \EE\left[(1-\alpha-\beta)\left(g(v^k)-g(x)\right)+\beta \left(g(\bar w^k)-g(x)\right)\right]\nonumber\\
    &&+\EE\left[\alpha\langle\tilde \nabla g(y^k),x^{k+0.5}-x\rangle\right]+{\left(\frac{\alpha^2L_g}{2}+\frac{\alpha^2L_g}{2\beta}\right)}\EE_{k_2}\left[\|x^{k+0.5}-x^k\|^2\right]\\
    &=& (1-\alpha-\beta)\EE\left[\langle H(x),v^k-x\rangle+g(v^k)-g(x)\right]
    +\beta\EE\left[\langle H(x),\bar w^k-x\rangle+g(\bar w^k)-g(x)\right]\nonumber\\
    &&+\alpha\EE\left[\langle H(x)+\tilde\nabla g(y^k),x^{k+0.5}-x\rangle+{\left(\frac{\alpha L_g}{2}+\frac{\alpha L_g}{2\beta}\right)}\|x^{k+0.5}-x^k\|^2\right],
\end{eqnarray*}
where in the equality the terms with same coefficients $(1-\alpha-\beta),\,\beta,\,\alpha$, are combined. In view of the definition of $Q(x';x)$ in~\eqref{Q-function}, the terms associated with the coefficient $(1-\alpha-\beta)$ can be replaced with $Q(v^k;x)$, and the terms associated with the coefficient $\beta$ can be replaced with $Q(\bar w^k;x)$. In addition, the term $\EE\left[\langle H(x)+\tilde\nabla g(y^k),x^{k+0.5}-x\rangle\right]$ can be bounded using the result from Lemma~\ref{lem:VI-relation-1} together with tower property. 
The next upper bound then follows:
\begin{eqnarray*}
    &&(1-\alpha-\beta)\EE\left[\langle H(x),v^k-x\rangle+g(v^k)-g(x)\right]+\beta\EE\left[\langle H(x),\bar w^k-x\rangle+g(\bar w^k)-g(x)\right]\nonumber\\
&&+\alpha\EE\left[\langle H(x)+\tilde\nabla g(y^k),x^{k+0.5}-x\rangle+{\left(\frac{\alpha L_g}{2}+\frac{\alpha L_g}{2\beta}\right)}\|x^{k+0.5}-x^k\|^2\right]\nonumber\\
&\le& (1-\alpha-\beta)\EE\left[Q(v^k;x)\right]+\beta\EE\left[ Q(\bar w^k;x)\right]\nonumber\\
&&+\frac{\alpha}{2\gamma}\EE\left[\left(1-p_1-{}\gamma\mu_h\right)\|x^k-x\|^2+p_1\|w^k-x\|^2-\|x^{k+1}-x\|^2\right]\nonumber\\
&&
-\frac{\alpha}{2\gamma}\left(p_1-2\gamma^2L_h^2\right)\EE\left[\|x^{k+0.5}-w^k\|^2\right]-\frac{\alpha}{2\gamma}\left(1-p_1-2\gamma\mu_h-\alpha\gamma L_g-\frac{\alpha \gamma L_g}{\beta}\right)\EE\left[\|x^{k+0.5}-x^k\|^2\right].
\end{eqnarray*}
Moving the term $\|x^{k+1}-x\|^2$ to LHS, we obtain
\begin{eqnarray}
    &&\EE\left[Q(v^{k+1};x)\right]+\frac{\alpha}{2\gamma}\EE\left[\|x^{k+1}-x\|^2\right]\nonumber\\
    &\le&(1-\alpha-\beta)\EE\left[Q(v^k;x)\right]+\beta\EE\left[ Q(\bar w^k;x)\right]\nonumber\\
&&+\frac{\alpha}{2\gamma}\EE\left[\left(1-p_1-{}\gamma\mu_h\right)\|x^k-x\|^2+p_1\|w^k-x\|^2\right]-\frac{\alpha}{2\gamma}\left(p_1-2\gamma^2L_h^2\right)\EE\left[\|x^{k+0.5}-w^k\|^2\right]\nonumber\\
&&
-\frac{\alpha}{2\gamma}\left(1-p_1-2\gamma\mu_h-\alpha\gamma L_g-\frac{\alpha \gamma L_g}{\beta}\right)\EE\left[\|x^{k+0.5}-x^k\|^2\right].\label{strong-potential-ineq}
\end{eqnarray}
To proceed, we shall construct a potential function to measure the convergence, while keeping the coefficients of $\|x^{k+0.5}-w^k\|^2$ and $\|x^{k+0.5}-x^k\|^2$ non-positive. To this end, we shall introduce the following bound while noting the expectation $\EE_{k_1+}[\cdot]:=\EE[\cdot|w^k,x^{k+1}]$ and the definition for $w^{k+1}$ in~\eqref{VR-VI-Opt-Strong-Update}:
\begin{eqnarray}
&&\EE\left[\|w^{k+1}-x\|^2\right]=\EE\left[\EE_{k_1+}\left[\|w^{k+1}-x\|^2\right]\right]=p_1\EE\left[\|x^{k+1}-x\|^2\right]+(1-p_1)\EE\left[\|w^k-x\|^2\right]\nonumber\\
&=& \EE\left[p_1\|x^{k+1}-x\|^2+(1-p_1-c)\|w^k-x\|^2+c\|w^k-x\|^2\right]\nonumber\\
&\le& \EE\left[p_1\|x^{k+1}-x\|^2+(1-p_1-c)\|w^k-x\|^2\right]\nonumber\\
&&+\EE\left[2c\|x^k-x\|^2+4c\|x^k-x^{k+0.5}\|^2+4c\|x^{k+0.5}-w^k\|^2\right],\nonumber
\end{eqnarray}
where $c>0$ is a parameter that needs to satisfy certain constraints to be determined later. Moving the term $\|x^{k+1}-x\|^2$ to LHS and combining the resulting inequality with~\eqref{strong-potential-ineq}, we have:
\begin{eqnarray}
&&\EE\left[Q(v^{k+1};x)\right]+\frac{\alpha}{2\gamma}\EE\left[(1-p_1)\|x^{k+1}-x\|^2+\|w^{k+1}-x\|^2\right]\nonumber\\
&\le& (1-\alpha-\beta)\EE\left[Q(v^k;x)\right]+\beta\EE\left[ Q(\bar w^k;x)\right]\nonumber\\
&&+\frac{\alpha}{2\gamma}\EE\left[(1-p_1-{}\gamma\mu_h+2c)\|x^k-x\|^2+(1-c)\|w^k-x\|^2\right]\nonumber\\
&&-\frac{\alpha}{2\gamma}\left(p_1-2\gamma^2L_h^2-4c\right)\EE\left[\|x^{k+0.5}-w^k\|^2\right]\nonumber\\
&&-\frac{\alpha}{2\gamma}\left(1-p_1-{2}\gamma\mu_h-\alpha\gamma L_g-\frac{\alpha \gamma L_g}{\beta}-4c\right)\EE\left[\|x^{k+0.5}-x^k\|^2\right].\label{Q-bd-c}
\end{eqnarray}
Taking $c=\frac{\gamma\mu_h}{3}$ together with the constraints given in~\eqref{const-2}, then the RHS of \eqref{Q-bd-c} can be 
bounded by:
\begin{eqnarray}
&&(1-\alpha-\beta)\EE\left[Q(v^k;x)\right]+\beta\EE\left[ Q(\bar w^k;x)\right]\nonumber\\
&&+\frac{\alpha}{2\gamma}\EE\left[\left(1-p_1-\frac{\gamma\mu_h}{3}\right)\|x^k-x\|^2+\left(1-\frac{\gamma\mu_h}{3}\right)\|w^k-x\|^2\right]
\nonumber\\
&\le& (1-\alpha-\beta)\EE\left[Q(v^k;x)\right]+\beta\EE\left[ Q(\bar w^k;x)\right]
\nonumber+\left(1-\frac{\gamma\mu_h}{3}\right)\frac{\alpha}{2\gamma}\EE\left[\left(1-p_1\right)\|x^k-x\|^2+\|w^k-x\|^2\right]\nonumber,
\end{eqnarray}
where the inequality is due to a simple observation that
\begin{eqnarray*}
    \left(1-p_1-\frac{\gamma\mu_h}{3}\right)\le \left(1-\frac{\gamma\mu_h}{3}\right)(1-p_1).
\end{eqnarray*}
Therefore we obtain
\begin{eqnarray}
    &&\EE\left[Q(v^{k+1};x)\right]+\frac{\alpha}{2\gamma}\EE\left[(1-p_1)\|x^{k+1}-x\|^2+\|w^{k+1}-x\|^2\right]\nonumber\\
    &\le& (1-\alpha-\beta)\EE\left[Q(v^k;x)\right]+\beta\EE\left[ Q(\bar w^k;x)\right]\nonumber\\
    &&+\left(1-\frac{\gamma\mu_h}{3}\right)\frac{\alpha}{2\gamma}\EE\left[\left(1-p_1\right)\|x^k-x\|^2+\|w^k-x\|^2\right]
\label{Q-bd-c2}
\end{eqnarray}
Finally, we note that
\begin{eqnarray}
\phi\EE\left[Q(\bar w^{k+1};x)\right]&=&\phi\EE\left[\EE_{k_2+}\left[Q(\bar w^{k+1};x)\right]\right]= \phi p_2\EE\left[Q(v^{k+1};x)\right]+\phi(1-p_2)\EE\left[Q(\bar w^k;x)\right],\nonumber
\end{eqnarray}
for any $\phi>0$, {where $\EE_{k_2+}[\cdot]:=\EE_{\zeta_k}[\cdot|\bar w^k,v^{k+1}]$ as defined in~\eqref{expectations-2}}. Adding the above identity to~\eqref{Q-bd-c2}, we obtain:
\begin{eqnarray}
&&\EE\left[(1-\phi p_2)Q(v^{k+1};x)+\phi Q(\bar w^{k+1};x)\right]+\frac{\alpha}{2\gamma}\EE\left[(1-p_1)\|x^{k+1}-x\|^2+\|w^{k+1}-x\|^2\right]\nonumber\\
&\le& \EE\left[(1-\alpha-\beta)Q(v^k;x)+(\beta+\phi(1-p_2)) Q(\bar w^k;x)\right]\nonumber\\
&&+\left(1-\frac{\gamma\mu_h}{3}\right)\frac{\alpha}{2\gamma}\EE\left[\left(1-p_1\right)\|x^k-x\|^2+\|w^k-x\|^2\right],\nonumber
\end{eqnarray}
which complete the proof by taking $x=x^*$.

}

\subsection{Proof of Corollary \ref{prop:grad-complexity-1}}
\label{proof:grad-complexity-1}
We first show that the parameters specified in Corollary \ref{prop:grad-complexity-1} satisfy the constraint~\eqref{const-2}. Indeed, the constraints will be reduced to the following:
\[
p_1-\frac{p_1}{8}-\frac{p_1}{3}\ge0,\quad \frac{15}{16}-\frac{11p_1}{6}\ge0,
\]
where the second inequality holds because $p_1=\frac{1}{m_1}$ and we assume trivially that $m_1\ge2$.

Next, inequality \eqref{potential-reduce-sto-error-2} in Theorem \ref{thm:per-iter-conv-1} implies that the reduction rate, denote as $C_{\scriptsize\mbox{red}1}$, is given by:
\begin{eqnarray}
C_{\scriptsize\mbox{red}1}:=\max\left\{\frac{1-\alpha-\beta}{1-\phi p_2},\frac{\beta+\phi(1-p_2)}{\phi},1-\frac{\gamma\mu_h}{3}\right\}.\nonumber
\end{eqnarray}
With the choice of $\phi$ and $p_2$, the following bounds hold:
\begin{eqnarray}
&\frac{1-\alpha-\beta}{1-\phi p_2}=\frac{1-2\alpha}{1-\alpha}\le 1-\alpha,&\nonumber\\
&\frac{\beta+\phi(1-p_2)}{\phi}=1-\frac{1}{m_2}+\frac{1}{(1+\alpha)m_2}=1-\frac{\alpha}{(1+\alpha)m_2}\le 1-\frac{\alpha}{2m_2}.&\nonumber
\end{eqnarray}
Therefore, the reduction rate is can be further expressed as
{
\begin{eqnarray}
&&\max\left\{\frac{1-\alpha-\beta}{1-\phi p_2},\frac{\beta+\phi(1-p_2)}{\phi},1-\frac{\gamma\mu_h}{3}\right\}\le\max\left\{1-\alpha,1-
\frac{\alpha}{2m_2},1-\frac{\gamma\mu_h}{3}\right\}\nonumber\\
&=&\max\left\{1-
\frac{\alpha}{2m_2},1-\frac{\gamma\mu_h}{3}\right\}\nonumber\\
&\overset{\eqref{SAVREP-parameter-choice}}{=}&{\max\left\{\max\left(1-\frac{\sqrt{\mu_h}}{24\sqrt{L_gm_2}},1-\frac{1}{24m_2}\right),\max\left(1-\frac{\mu_h}{12L_h\sqrt{m_1}},1-\frac{\sqrt{\mu_h}}{12\sqrt{L_gm_2}},1-\frac{1}{12m_1}\right)\right\}} \nonumber\\
&{=}& \max\left\{1-\frac{\sqrt{\mu_h}}{24\sqrt{L_gm_2}},1-\frac{1}{24m_2},1-\frac{\mu_h}{12L_h\sqrt{m_1}},1-\frac{\sqrt{\mu_h}}{12\sqrt{L_gm_2}},1-\frac{1}{12m_1}\right\}{:=C_{\scriptsize\mbox{red}2}}.\nonumber
\end{eqnarray}
We are now ready to derive the convergence rate, by denoting the potential function as
}
\[
W_k:=\left(\EE\left[(1-\phi p_2)Q(v^k;x^*)+\phi Q(\bar w^k;x^*)\right]+\frac{\alpha}{2\gamma}\EE\left[\left(1-p_1\right)\|x^k-x^*\|^2+\|w^k-x^*\|^2\right]\right),
\]
then we have
\begin{eqnarray}
&&W_{k+1}\nonumber
\le C_{\scriptsize\mbox{red}1}\cdot W_k
\le C_{\scriptsize\mbox{red}2}\cdot W_{k}
\le C_{\scriptsize\mbox{red}2}^{k+1}\cdot W_0
\nonumber,
\end{eqnarray}
where the first inequality is due to comparing the RHS of~\eqref{potential-reduce-sto-error-2} with $W_k$ and the definition of $C_{\scriptsize\mbox{red}1}$.
Note $v^0:=\bar w^0:=w^0=x^0$. Therefore,
\begin{eqnarray}
&&\EE\left[(1-p_1)\|x^{k+1}-x^*\|^2+\|w^{k+1}-x^*\|^2\right]\le \frac{2\gamma}{\alpha}\cdot W_{k+1}\le \frac{2\gamma}{\alpha}\cdot C_{\scriptsize\mbox{red}2}^{k+1}\cdot W_0\nonumber\\
&\le& C_{\scriptsize\mbox{red}2}^{k+1}\cdot\left(\frac{4\gamma}{\alpha}Q(x^0;x^*)+2\|x^0-x^*\|^2\right)
\le C_{\scriptsize\mbox{red}2}^{k+1}\cdot\left(\frac{\gamma}{\alpha\mu_h}\left\|H(x^0)+\nabla g(x^0)\right\|^2+2\|x^0-x^*\|^2\right)
\nonumber
\end{eqnarray}
By using the expression of $C_{\scriptsize\mbox{red}2}$, the above rate guarantees the iteration complexity {for obtaining $\EE\left[\|w^k-x^*\|^2\right]\le \epsilon$ is}
\begin{eqnarray}
\mathcal{O}\left(\frac{1}{1-C_{\scriptsize\mbox{red}2}}\log\frac{d_0}{\epsilon}\right)=\mathcal{O}
\left(\left(m_1+m_2+\sqrt{\frac{L_gm_2}{\mu_h}}+\frac{L_h\sqrt{m_1}}{\mu_h}\right)\ln\frac{d_0}{\epsilon}\right)
\label{grad-comp-1}
\end{eqnarray}
where the expected per iteration gradient cost is
$
\mathcal{O}(p_1m_1+p_2m_2+4)=\mathcal{O}(1).
$

\section{Parameter Choices in Numerical Experiments}
\label{app:parameters-numerical}

All the values listed in the following tables (the learing rates) are multiplied with the theoretical values given in the analysis for the corresponding methods (Corollary~\ref{cor-parameter-complexity} for SAVREP-m, Corollary~\ref{prop:grad-complexity-1} for SAVREP, and Theorem 2.5 in~\cite{alacaoglu2022stochastic} for EVR).
\subsection{Strongly Monotone Problem (perturbation $\mu=10^{-5}$)}

\begin{tabular}{c|c|c|c}
$L_g=1$ & \multicolumn{2}{c|}{SAVREP} & EVR \\
$m_2$ & $\alpha$ & $\gamma$ & $\tau$ \\
\hline
10491 & 200 & 20 & 80 \\
5245 & 40 & 20 & 40 \\
2622 & 40 & 20 & 40 \\
\end{tabular}
\begin{tabular}{c|c|c|c}
$L_g=3$ & \multicolumn{2}{c|}{SAVREP} & EVR \\
$m_2$ & $\alpha$ & $\gamma$ & $\tau$ \\
\hline
10491 & 400 & 40 & 100 \\
5245 & 1000 & 40 & 100 \\
2622 & 1000 & 40 & 100 \\
\end{tabular}
\begin{tabular}{c|c|c|c}
$L_g=10$ & \multicolumn{2}{c|}{SAVREP} & EVR \\
$m_2$ & $\alpha$ & $\gamma$ & $\tau$ \\
\hline
10491 & 1000 & 20 & 100 \\
5245 & 2000 & 20 & 100 \\
2622 & 2000 & 20 & 100 \\
\end{tabular}

\subsection{Strongly Monotone Problem (perturbation $\mu=10^{-10}$)}

{
\begin{tabular}{c|c|c|c}
$L_g=1$ & \multicolumn{2}{c|}{SAVREP} & EVR \\
$m_2$ & $\alpha$ & $\gamma$ & $\tau$ \\
\hline
10491 & 8e+04 & 20 & 80 \\
5245 & 8e+04 & 20 & 40 \\
2622 & 2e+04 & 20 & 40 \\
\end{tabular}
\begin{tabular}{c|c|c|c}
$L_g=3$ & \multicolumn{2}{c|}{SAVREP} & EVR \\
$m_2$ & $\alpha$ & $\gamma$ & $\tau$ \\
\hline
10491 & 2e+05 & 40 & 100 \\
5245 & 4e+05 & 40 & 100 \\
2622 & 4e+05 & 40 & 100 \\
\end{tabular}
\begin{tabular}{c|c|c|c}
$L_g=10$ & \multicolumn{2}{c|}{SAVREP} & EVR \\
$m_2$ & $\alpha$ & $\gamma$ & $\tau$ \\
\hline
10491 & 4e+05 & 20 & 100 \\
5245 & 4e+05 & 20 & 100 \\
2622 & 1e+06 & 20 & 100 \\
\end{tabular}
}

\subsection{Monotone Problem}
\begin{tabular}{c|c|c|c}
$L_g=1$ & \multicolumn{2}{c|}{SAVREP-m} & EVR \\
$m_2$ & $\alpha$ & $\gamma$ & $\tau$ \\
\hline
10491 & 0.1 & 40 & 40 \\
5245 & 0.1 & 40 & 40 \\
2622 & 0.1 & 40 & 40 \\
\end{tabular}
\begin{tabular}{c|c|c|c}
$L_g=3$ & \multicolumn{2}{c|}{SAVREP-m} & EVR \\
$m_2$ & $\alpha$ & $\gamma$ & $\tau$ \\
\hline
10491 & 0.1 & 40 & 40 \\
5245 & 0.1 & 40 & 40 \\
2622 & 0.1 & 40 & 40 \\
\end{tabular}
\begin{tabular}{c|c|c|c}
$L_g=10$ & \multicolumn{2}{c|}{SAVREP-m} & EVR \\
$m_2$ & $\alpha$ & $\gamma$ & $\tau$ \\
\hline
10491 & 0.1 & 40 & 40 \\
5245 & 0.1 & 40 & 40 \\
2622 & 0.1 & 40 & 40 \\
\end{tabular}


\end{appendices}

\end{document}